\def\R{\mathbb{R}}
\def\tr{\mathrm{tr}}
\def\D{\mathcal{D}}
\def\ind{\perp\!\!\perp}
\def\G{\mathcal{G}}
\def\G{\mathcal{G}}
\def\P{\mathrm{P}}
\def\Q{\mathrm{Q}}
\def\W{\mathcal{W}}
\theoremstyle{plain}
\newtheorem{Theorem}{Theorem}[section]
\newtheorem{Notation}{Notation}[section]
\newtheorem{lemma}{Lemma}[section]
\newtheorem{proposition}{Proposition}[section]
\theoremstyle{definition}
\newtheorem{definition}{Definition}[section]
\theoremstyle{remark}
\newtheorem*{notation}{Notation}
\numberwithin{equation}{section}
\newtheorem{Ex}{Example}[section]
\newtheorem{Rem}{Remark}[section]
\title{The Letac-Massam conjecture and existence of high dimensional Bayes estimators for Graphical Models}
\author[1]{Emanuel Ben-David\thanks{ehb2126@columbia.edu}}
\author[2]{Bala Rajaratnam \thanks{brajarat@stanford.edu}}
\affil[1]{Department of Statistics, Columbia University}
\affil[2]{Department of Statistics, Stanford University}
\date{}
\begin{document}
\maketitle

\begin{abstract}

The Wishart distribution, defined on the open convex positive definite cone, plays a central role in multivariate analysis and multivariate distribution theory. Its domain of integrability is often referred to as the Gindikin set. In recent years, a variety of useful extensions of the Wishart have been proposed in the literature for the purposes of studying Markov random fields / graphical models. In particular, generalizations of the Wishart, referred to as Type I and Type II Wishart distributions, have been introduced by Letac and Massam (\emph{Annals of Statistics} \cite{L07}) and play important roles in both frequentist and Bayesian inference for Gaussian graphical models. These distributions have been especially useful in high-dimensional settings due to the flexibility offered by their multiple shape parameters. The domain of integrability of these graphical Wisharts are however not fully identified, despite its critical role in determining existence of high dimensional Bayes estimators and specifying diffuse proper priors for Gaussian graphical models. Moreover, these graphical Wisharts also serve as statistical models in their own right for matrix-variate distributions defined on sparse convex subsets of the cone. Besides its statistical motivation, understanding the domain of integrability is also of independent mathematical interest as these graphical Wisharts are extensions of the Gamma function on sparse manifolds. In this paper we resolve a long-standing conjecture of Letac and Massam (LM) concerning the domains of the multi-parameters of graphical Wishart type distributions. This conjecture, posed in \emph{Annals of Statistics}, also relates fundamentally to the existence of Bayes estimators corresponding to these high dimensional priors. To achieve our goal, we first develop novel theory in the context of probabilistic analysis of graphical models. Using these tools, and a recently introduced class of Wishart distributions for directed acyclic graph (DAG) models, we proceed to give counterexamples to the LM conjecture, thus completely resolving the problem. Our analysis also proceeds to give useful insights on graphical Wishart distributions with implications for Bayesian inference for such models.

\end{abstract}

%%%%%%%%%%%%%%%%%%%%%%%%%%%%%%%%%%%%%%%%%%%%%%%%%%%%%%%%%%%%%%%%%%%%%%%%%%%%%%%%%%%
\section{Introduction}\label{intro}
\noindent
Inference for graphical models is a topic of contemporary interest, and in this regard, various tools for inference have been proposed in the statistics literature, including establishing sufficient and/or necessary conditions for existence of high dimensional estimators. One important contribution in the area are the families of Type I and Type II graphical Wishart distributions introduced by Letac and Massam (LM) \cite{L07}. The families of graphical Wishart type distributions of Letac-Massam have the distinct advantage of being standard conjugate for Gaussian graphical models, have attractive hyper Markov properties, and have multiple shape parameters. This is in contrast with the classical Wishart distribution which has just one shape parameter that is restricted to the one dimensional Gindikin set - see \eqref{eq:gindikin_set}. These multi-parameter graphical Wishart distributions are therefore useful for flexible high dimensional inference \cite{R08}, and have also been used as flat conjugate priors for objective Bayesian inference. Since the domain of integrability of these high dimensional priors are not fully identified, it is not clear when these distributions yield proper priors. The LM conjecture aims to address this question formally. The LM conjecture is also critical for understanding when these priors lead to well-defined Bayes estimators, since this is not always guaranteed in high dimensional, sample starved settings. In this sense resolving the LM conjecture can be viewed as a Bayesian analogue of the frequentist problem of identifying sufficient and necessary conditions for the existence of the maximum likelihood estimator for Gaussian graphical models. 

The primary goal of this paper is therefore to resolve a conjecture of Letac and Massam (henceforth the LM conjecture) which concerns identifying the parameter sets for the families of the so-called Type I and Type II Wishart distributions. A definitive solution to the LM conjecture has remained elusive to the graphical models community ever since it was formally posed by Letac and Massam about ten years ago. The conjecture also has deep and profound connections to Gindikin's result \cite{F94, G75} on the region of integrability of the p-variate Gamma function. This domain is referred to as the Gindikin set and is given as follows:
\begin{equation}\label{eq:gindikin_set}
\Delta:=\left\{1, \frac{1}{2},\frac{3}{2},\ldots, \frac{p-1}{2} \right\}\cup\left(\frac{p-1}{2}, +\infty\right).
\end{equation}

Though the main goal of this paper is to resolve the LM conjecture, we note that understanding the domain of integrability of these graphical Wishart distributions is important for two other reasons beyond Bayesian inference and model selection: 1) These two classes of distributions also serve as statistical models in their own right for matrix-variate distributions defined on sparse subsets of the cone, and 2) The integrals of these graphical Wishart densities are extensions of the gamma and multivariate gamma functions on sparse manifolds. Thus understanding the domains of integrability of these graphical Wishart distributions is of independent mathematical interest that is closely linked to generalizations of the Gindikin set. 

In what follows we shall employ the notation introduced in the work of Letac and Massam \cite{L07}. The Type I Wishart and Type II Wishart are defined,  respectively, on the cones   $\Q_{\G}$ and  $\P_{\G}$  associated with a decomposable graph $\G$, i.e., an undirected graph that that has no induced cycle of length greater than or equal to four. These cones naturally arise as the set of covariance and inverse-covariance parameters for a Gaussian undirected graph model over $\G$. i.e., the family of multivariate Gaussian distributions that obey the pairwise or global Markov property with respect to $\G$  \cite{L96}.  It is well known that if the vertices of $\G$  are labeled $1,2,\ldots, p$, then a p-variate Gaussian distribution $\mathcal{N}_{p}\left(0, \Sigma\right)$  obeys the global Markov property with respect to $\G$  if $\Sigma_{ij}^{-1}=0$  whenever there exists no edge between $i$ and $j$. This property gives a simple characterization of the associated inverse-covariance matrices, i.e., the elements of the cone $\P_{\G}$. The cone $\Q_{\G}$ is the dual cone of $\P_{\G}$ and its elements are incomplete covariance matrices where only the entries along the edges of $\G$ are specified, and the rest of the entries are unspecified. However, the specified entries are also the only functionally independent entries of the covariance matrix parameter, and uniquely determine the rest of the entries (the unspecified entries that is). In particular, the space of covariance matrices for the Gaussian inverse-covariance graph model over $\G$  can be identified with the cone $\Q_{G}$.  When $\G$ is complete, i.e., in the full model, Type I and Type II Wishart distributions are identical to the classical Wishart distribution. Moreover by restricting the multi-parameters to a specific one dimensional space, these distributions reduce to the hyper Wishart distribution introduced by Dawid and Lauritzen \cite{D93} and the G-Wishart defined by Roverato in  \cite{RO00} respectively (see \cite{L07} for more details). Although having multiple shape parameters allow the Type I Wishart and Type II Wishart distributions to be more flexible as prior distributions, there is a trade-off: the sets of multi-parameters are not completely identified.

In an attempt to identify the set of multi-parameterss of the Type I Wisharts, denoted by $\mathcal{A}$, and that of Type II Wisharts, denoted by $\mathcal{B}$, in \cite[ Section 3.3]{L07} Letac and Massam first consider the case when $\G$ is homogeneous, i.e., $\G$ is decomposable and has no induced paths of length greater than or equal to 4. When $\G$ is homogeneous Letac and Massam are able to completely identify $\mathcal{A}$  and $\mathcal{B}$ and, furthermore, give algebraic expressions for the elements of both sets. If $\G$ is non-homogeneous, however, in \cite[ Section 3.4]{L07} the authors are able to only partially identify the sets $\mathcal{A}$ and $\mathcal{B}$. More specifically, for each perfect order  $\mathcal {P}$ of $\G$, they identify a subset $\mathcal{A}_{\mathcal{P}}$ of $\mathcal{A}$  and a subset $\mathcal{B}_{\mathcal{P}}$ of $\mathcal{B}$. The authors then proceed to conjecture that  $\mathcal{A}$  and  $\mathcal{B}$  are indeed the union of  $\mathcal{A}_{\mathcal{P}}$ and $\mathcal{B}_{\mathcal{P}}$  over all perfect orders of the cliques of $\mathcal{G}$, respectively. They demonstrate that the conjecture holds when $\G$ is the 4-path, the simplest non-homogeneous decomposable graph,  $\stackrel{\tiny{1}}{\bullet} - \stackrel{\tiny{2}}{\bullet} - \stackrel{\tiny{3}}{\bullet} - \stackrel{\tiny{4}}{\bullet}$. They note that a similar calculation for the 5-path appears insurmountable.

On a different route, but motivated by the recent work of Letac and  Massam \cite{L07} and Rajaratnam et al. \cite{R08} for concentration graph models, and Khare and Rajaratnam \cite{KR11} for covariance graph models, the authors of this paper undertook a parallel analysis in \cite{BR11} for directed acyclic graph models, abbreviated DAG models, or Bayesian networks. In \cite{BR11}, we introduce a new class of multi-parameter Wishart type distributions, useful for Bayesian inference for Gaussian DAG models. One of its advantages is that the framework in \cite{BR11} applies to all directed acyclic graph models and not just the narrower class of perfect DAGs. Furthermore the normalizing constant for these DAG Wisharts is available in closed form for all DAGs. It is also well-known fact that the family of inverse-covariance graph model over a decomposable graph $\G$ is Markov equivalent to the family of DAG models over a perfect DAG version of $\G$. As we shall demonstrate later, this, in particular, implies that both the Type II Wisharts of Letac-Massam in \cite{L07} and the DAG Wisharts in \cite{BR11} are indeed defined on the same cone $\P_{\G}$. Therefore, a relevant question is how the functional form and the multi-parameter set of the Type II Wishart density compare with those of the DAG Wishart. A similar comparison arises between the Type I Wisharts of Letac-Massam and the Riez distributions for decomposable graphs introduced by Andersson and Klein in \cite{A10}. Such comparisons shed light on the LM conjecture since the domains of integrability of the DAG Wisharts are fully specified in \cite{BR11}. In this paper we develop tools which allows a careful comparison of these two types of Wisharts on $\P_{\G}$ and $\Q_{\G}$, leading to counterexamples, which in turn can then be used to conclude that the LM conjecture does not hold in general.
  
  The primary key to resolving the part of the LM conjecture that concerns the Type II  Wishart is Theorem \ref{thm:comparison} of this paper. In this theorem we show that for any non-homogeneous decomposable graph $\mathcal{G}$ there exists a perfect order  $\mathcal{P}$ and  a perfect DAG version of $\mathcal{G}$, associated with this order,  such that the Type II Wishart distribution on  $\mathcal{B}_{\mathcal{P}}$ is a special case of the DAG Wishart distribution. Using this observation, and depending on the perfect DAG version of the underlying graph $\G$, we derive a condition in Proposition \ref{prop:B1}, which when satisfied, can lead to counterexamples to the LM conjecture. We then proceed to present two graphs (with their respective perfect DAG versions) where the stated condition in Proposition \ref{prop:B1} is satisfied and lead to counterexamples to the LM conjecture. The counterexample to the other part of the LM conjecture concerning the Type I Wishart distribution is given after Proposition \ref{prop:A1} where  we prove that the same condition as  that in Proposition \ref{prop:B1} can lead  to resolving the  LM conjecture. In addition to disproving the LM conjecture, we also prove that not only for non-homogeneous decomposable graphs, but also for homogeneous graphs, the family of Type II Wisharts are a subclass of the family of DAG Wisharts.

The organization of the paper is as follows. In \S \ref{sec:pre} we recall some fundamental notation and concepts in graphical models and, in particular, for Gaussian undirected graphical models. In \S \ref{sec:wishart_for_decomposable graph} we provide the reader with definition of Type I  and Type II Wishart distributions and formally state the Letac-Massam conjecture. In \S \ref{subsec:gaussian_over_D} and \S \ref{sub:dag_wishart} we give a short introduction to Guassian DAG models and the families of DAG Wisharts. The main results of the paper are presented in the ensuing four sections. In  \S \ref{sub:comparison_non_hom} and  \S \ref{sub:typeII_dag_wish_hom}, we develop tools which enable a detailed comparison between Type II Wisharts on one hand, and on the other hand, DAG Wisharts for the corresponding DAG versions of the associated undirected graphs. Moreover, tools are developed for comparisons of both decomposable and homogeneous Type II Wisharts to their DAG Wishart counterparts. Using the tools developed in \S \ref{sub:comparison_non_hom} and \S  \ref{sub:typeII_dag_wish_hom}, we formally resolve the Letac-Massam conjecture in  \S \ref{sub:LM_partb} and  \S \ref{sub:LM_parta} by providing counterexamples.

%%%%%%%%%%%%%%%%%%%%%%%%%%%%%%%%%%%%%%%%%%%%%%%%%%%%%%%%%%%%%%%%%%%%%%%
\section{Preliminaries} \label{sec:pre}
%===========================

We now introduce some preliminaries on graph theory and graphical models. This section closely follows the notation and exposition given in \cite{BR11} and \cite{BR12}.

\subsection{Graph theoretic notation and terminology}\label{sub:graph}
%------------------------------------------------------
A graph $\G$ is a pair of objects $\left(V, E\right)$, where $V$ and $E$ are two disjoint finite sets representing, respectively, the vertices and the edges of $\G$. An edge $e\in E$ is said to be undirected if $e$ is an unordered pair $\{v,v'\}$, or directed if $e$ is an ordered pair $(v,v')$ for some $v,v'\in V$. Now a graph is said to be undirected if all its edges are undirected, and directed if all its edges are directed. A directed edge $\left(v,v'\right)\in E$ is denoted by $v\rightarrow v'$. When $v\rightarrow v'$ and $v\neq v'$ we say that $v $ is a parent of $v'$, and $v'$ is a child of $v$. The set of parents of $v$ is denoted by $\mathrm{pa}\left(v\right)$, and the set of children of $v$ is denoted by $\mathrm{ch}\left(v\right)$. The family of $v$ is $\mathrm{fa}\left(v\right):=\mathrm{pa}\left(v\right)\cup \left\{v\right\}$. For an undirected edge $\left\{v,v'\right\}\in E$ the vertex $v$ is said to be a neighbor of $v'$, or $v'$ a neighbor of $v$, if $v\neq v'$. The set of all neighbors of $v$ is denoted by $\mathrm{ne}\left(v\right)$. In general two distinct vertices are said to be adjacent, denoted by $v\sim v'$, if there exists  either a directed or an undirected edge between them. A loop in $\G$ is an ordered pair $\left(v,v\right)$, or an unordered pair $\left\{v,v\right\}$ in $E$. For ease of notation, in this paper we shall always assume that the edge set of each graph contains all the loops, however, we draw the graph without the loops. \\
 A graph $\G'=\left(V', E'\right)$ is a subgraph of  $\G=\left(V, E\right)$, denoted by  $\G'\subseteq \G$, if  $V'\subseteq V$  and  $ E'\subseteq  E$. In addition, if  $\G'\subseteq \G$  and  $ E'=V'\times V'\cap  E$, we say that $\G'$ is an {induced}  subgraph of  $\G$. We shall consider only induced subgraphs in what follows. For a set $A\subseteq V$, the subgraph  $\G_A=\left(A, A\times A\cap  E\right)$ is said to be the graph induced by $A$. A graph  $\G$  is called complete if every pair of vertices are adjacent. A subset $A\subseteq V$  is said to be a clique, if the induced subgraph  $\G_A$ is a complete subgraph of $\G$ that is not contained in any other complete subgraphs of $\G$. A path in $\G$ of length $n\geq 1$ from a vertex $v$ to a vertex $v'$ is a finite sequence of distinct vertices $v_{0}=v,\ldots, v_{n}=v'$ in $V$ such that $\left(v_{k-1}, v_k\right)$  or $\left\{ v_{k-1}, v_k\right\}$ are in $E$ for each $k=1,\ldots, n$. A path is said to be directed if at least one of the edges is directed. We say  $v$  {leads} to $v'$, denoted by  $v\mapsto v'$, if there is a directed path from  $v$  to  $v'$. A graph $\G=\left(V, E\right)$ is said to be connected if for any pair of distinct vertices $v, v'\in V$ there exists a path between them. An  $n$-{cycle} in  $\G$  is a path of length  $n$  with the additional requirement that the end points are identical. A {directed}  $n$-cycle is defined accordingly. A graph is {acyclic} if it does not have any cycles. An acyclic directed  graph, denoted by  DAG (or ADG), is a directed graph with no cycles of length greater than 1.
%%%
\begin{notation}
Henceforth in this paper, we denote an undirected graph by $\G=\left(V, E\right)$ and a DAG by $\D=\left(V, F\right)$. Also, otherwise stated, we always assume that the vertex set $V=\{1,2,\ldots,p\}$.
\end{notation}
%%%
\indent The undirected version of a DAG $\D=\left(V, F\right)$, denoted by $\D^{\mathrm{u}}=\left(V,  F^{\mathrm{u}}\right)$,  is the undirected graph obtained by replacing all the directed edges of  $\D$  by undirected ones. An immorality in $\D$  is  an induced subgraph of the from  $v\rightarrow v' \leftarrow v''$.  Moralizing an immorality entails adding an undirected edge between the pair of parents that have the same children. Then the moral graph of  $\D$, denoted by  $\D^{\mathrm{m}}=\left(V, F^{\mathrm{m}}\right)$, is the undirected graph obtained by first  moralizing each  immorality of  $\D$  and then making the undirected version of the resulting graph. Naturally there are DAGs which have no immoralities and this leads to the following definition.
%%%
\begin{definition}
A DAG   $\D$ is said to be perfect if it has no immoralities; i.e., the parents of all vertices are adjacent, or equivalently if the set of parents of each vertex induces a complete subgraph of  $\D$ .
\end{definition}
%%%
\indent Given a directed acyclic graph (DAG), the set of {ancestors} of a vertex $v$, denoted by $\mathrm{an}\left(v\right)$, is the set of those vertices $v''$  such that $v''\mapsto v$. Similarly, the set of descendants of  a vertex $v$, denoted by  $\mathrm{de}\left(v\right)$, is the set of  those  vertices  $v'$  such that  $v\mapsto v'$.  The   set of {non-descendants } of  $v$  is  $\mathrm{nd}\left(v\right)=V\setminus\left(\mathrm{de}\left(v\right)\cup \left\{v\right\}\right)$.  A set  $A\subseteq V$  is said to be ancestral when $A$  contains  the parents of its members. The smallest ancestral set containing a set $B\subseteq V$ is denoted by $\mathrm{An}\left(B\right)$. 
%---------------------------------
\subsection{Decomposable and homogeneous graphs}\label{sub:decomp_ug}
%---------------------------------
Let $\G$  be a decomposable graph. The reader is referred to Lauritzen \cite{L96}  for all the common notions of decomposable graphs that we will use here. One such important notion is that of a perfect order of the cliques. Every decomposable graph admits a  perfect order of its cliques. Let
$\left(C_1,\cdots, C_{r}\right)$ be one such perfect order of the cliques of the graph $\G$. The history
for the graph is given by $H_1 = C_1$ and
\[
H_{j} = C_{1} \cup C_2 \cup \cdots \cup C_{j}, \; \; j = 2, 3, \cdots, r.
\]      

\noindent
The separators of the graph are given by
\[
S_{j} = H_{j-1} \cap C_j, \; \; j = 2, 3, \cdots, r.
\]
\noindent
The residuals are defined as follows:
\[
R_j = C_j \setminus H_{j-1} \mbox{ for } j = 2, 3, \cdots, r.
\]
\noindent
Generally, we will denote by $\mathscr{C}_{\G}$ the set of cliques of a graph and by $\mathscr{S}_{\G}$ its set of separators. Let $r' \leq r - 1$ denote the number of distinct separators and $\nu \left(S\right)$ denote the multiplicity of $S$, i.e., the number of $j$ such that $S_j = S$. 

Decomposable (undirected) graphs and (directed) perfect graphs have a deep connection. If $\G$ is decomposable, then there exists a perfect DAG version of $\G$, i.e., a perfect DAG $\D$ such that $\D^{\mathrm{u}}=\G$. On the other hand, the undirected version of a perfect DAG is necessarily decomposable \cite{GO04, L96}.

A decomposable graph $\G$ is said to to be homogeneous if for any two adjacent vertices $i, j$ we have
\begin{equation}\label{eq:homogeneous}
\mathrm{ne}\left(j\right)\cup\left\{ j\right\}\subseteq \mathrm{ne}\left(i\right)\cup\left\{ i\right\}\:\;\text{or}\;\: \mathrm{ne}\left(i\right)\cup\left\{ i\right\}\subseteq \mathrm{ne}\left(j\right)\cup\left\{ j\right\}.
\end{equation}
The reader is referred to Letac and Massam \cite{L07} for all the common notions of homogeneous graphs.

%%
%%%%%%%%%%%%%%%%%%%%%%%%%%%%%%%%%%%%%%%%%%%%%%%%%%%%%%%%%%%%%%%%%%%%%%%%%%%%%%%%%%%
\subsection{Undirected Gaussian Graphical Models}\label{sub:gaussian_over_G}
%%%
 Let $\G=\left(V,E \right)$ be a undirected graph with $V=\left\{1,\ldots, p\right\}$ and let $\mathbf{X}=\left(X_{1},\ldots, X_{p}\right)^{\top}$ be a random vector in $\R^{p}$ such that $\mathbf{X}\sim\mathcal{N}_{p}\left(0, \Sigma\right)$, i.e.,  $\mathbf{X}$ has a $p$-variate Gaussian distribution with mean zero and covariance $\Sigma$. The covariance matrix $\Sigma$ is assumed to be positive definite (written as $\Sigma \succ 0$) with inverse-covariance matrix (also said to be the precision or  concentration matrix). $\Omega:=\Sigma^{-1}$. Now for any two vertices $i,j\in V$ 
%%%
\begin{equation*}
X_{i}\ind X_{j}|X_{V\setminus \left\{i,j\right\}}\Longrightarrow  \Omega_{ij}=0.
\end{equation*}
A simple proof of this well-known fact can be found in \cite[section 5.1]{L96}. In particular, the distribution $\mathcal{N}_{p}\left(0, \Sigma\right)$ is said to be a  Markov random field over $\G$ if
\begin{equation}\label{eq:concentration_graph}
\left\{i,j\right\}\notin E\implies \Omega_{ij}=0.
\end{equation}
%%%
Now let $\mathscr{N}\left(\G\right)$ denote the family of all $p$-variate Gaussian distributions $\mathcal{N}_{p}\left(0, \Sigma\right)$ that are Markov random fields over $\G$. Note that Equation \eqref{eq:concentration_graph} provides an easy description of the elements of $\mathscr{N}\left(\G\right)$ in terms of the pattern of zeros in the associated inverse-covariance matrices. Subsequently, $\mathscr{N}\left(\G\right)$ is said to be the Gaussian concentration graph model over $\G$. The set of covariance matrices
%%%
\[
\mathrm{PD}_{\G}:=\left\{\Sigma\succ 0 : \mathcal{N}_{p}\left(0,\Sigma\right)\in \mathscr{N}\left(\G\right)\right\}
\]
%%%
 is the standard parameter set for $\mathscr{N}\left(\G\right)$ . In light of Equation \eqref{eq:concentration_graph} the distributions in the exponential family $\mathscr{N}\left(\G\right)$ can be parametrized by the canonical parameter $\Omega=\Sigma^{-1}$ which lives in the space of inverse-covariance matrices defined as follows:
 %%%
 \[
 \P_{\G}:=\left\{
\Omega\succ 0 : \Omega_{ij}=0\: \text{whenever}\: \left\{i,j\right\}\notin E\right\}.
\]
%%%
Let $\mathrm{S}_p$ denote the set of $p\times p$ symmetric matrices. Then 
 \begin{align*}
 \mathrm{Z}_{\G}&:=\left\{ A\in\mathrm{S}_p :\:  A_{ij}=0 \:\text{whenever  $\left\{i,j\right\}\notin E$}\right\};\\
 \mathrm{I}_{\G}&:=\left\{\Gamma=(\Gamma_{ij})\in \R^{E}: \: \Gamma_{ij}=\Gamma_{ij} \: \text{for every $\{i,j\}\in E$} \right\}.
 \end{align*}
%%%
We call each element in $\mathrm{I}_{\G}$ a $\G$-incomplete matrix. One can easily check that $\mathrm{I}_{\G}$ is a real linear space, isomorphic to $\mathrm{Z}_{\G}$, of dimension $|E|$. A $\G$-incomplete matrix $\Gamma$ is said to be partial positive definite over  $\G$ if for each clique $C\in\mathscr{C}_{\G}$ the $|C|\times |C|$ matrix $\Gamma_{C}=\left(\Gamma_{ij}\right)_{i,j\in C}$ is positive definite. Note that for any decomposable graph $\G$, the set of partial positive definite matrices over $\G$, denoted by $\Q_{\G}$, is the dual cone of the (open convex) cone $\P_{\G}$ \cite{L07}. When $\G$ is decomposable Grone et
al. in \cite{G84} prove that each $\Gamma\in \Q_{\G}$ can be completed to a unique positive definite matrix $\Sigma=\Sigma\left(\Gamma\right)\in \mathrm{PD}_{\G}$. This means that $\Sigma$ is the only element in $\mathrm{PD}_{\G}$ with the property that $\Sigma_{ij}=\Gamma_{ij}$, for each $\left\{i,j\right\}\in E$. If $\Sigma^{E}$ denotes an element of $\Q_{\G}$
with the unique positive definite completion $\Sigma$ in $\mathrm{PD}_{\G}$, then Grone et al. \cite{G84} explicitly
provide a bijective mapping  $\Sigma^{E}\mapsto \Sigma: \Q_{\G}\rightarrow \mathrm{PD}_{\G}$. If we compose this mapping with the inverse mapping $\Sigma\mapsto \Sigma^{-1}: \mathrm{PD}_{\G}\rightarrow \P_{G}$, then we obtain the bijective mapping $\Sigma^{E}\mapsto \Sigma^{-1}: \Q_{\G}\rightarrow \P_{\G}$. The corresponding inverse mapping is given as $\Omega\mapsto \Omega^{-E}: \P_{\G}\rightarrow\Q_{\G}$  where $\Omega^{-E}:=\left(\Omega^{-1}\right)^{E}$. We shall frequently invoke these mappings in subsequent sections.
 
 %%%%%%%%%%%%%%%%%%%%%%%%%%%%%%%%%%%%%%%%%%%%%%%%%%%%%%%%%%%%%%%%%%%%%%%%%%%%%%%%%%%%%
 
 \section{The Letac-Massam Wishart type distributions for decomposable graphs}\label{sec:wishart_for_decomposable graph}
Henceforth in this paper, we assume that $\G=\left(V,E\right)$ is a decomposable graph and the vertices are labeled $1,2,\ldots, p$. The primary goal of this section is to provide the reader with an overview of the families of Wishart-Type I and Wishart-Type II distributions introduced in \cite{L07}. At the end of this section we shall formally state the LM conjecture concerning the domains of the multi-parameters for these distributions.
 
 %===================================================
\subsection{Markov ratios and corresponding measures on $\Q_{\G}$ and $\P_{\G}$}
%%%
 Let $C_{1},\ldots, C_{r}$ be a perfect order of the cliques of $\G$ and let $\left(S_{2}, \ldots, S_{r}\right)$ be the corresponding sequence of  separators, with possible repetitions. For each $\alpha\in \R^{r}, \; \beta \in \R^{r-1}$ and $\Sigma^{E}\in \Q_{\G}$, the Markov ratio
$H_{\G}\left(\alpha,\beta, \Sigma^{E}\right)$ is defined as follows:
%%%
 \[
H_{\G}\left(\alpha,\beta,
\Sigma^{E}\right):=\dfrac{\prod_{i=1}^{r}\det\left(\Sigma_{C_{i}}\right)^{\alpha_{i}}}{\prod_{i=2}^{r}\det\left(\Sigma_{S_{i}}\right)^{\beta_{i}}}.
 \] 
 %%%
Let $c:=\left(c_{1},\ldots,c_{r}\right)$ and $s:=\left(s_{2},\ldots,s_{r}\right)$ where $c_{i}:=|C_{i}|$ and $s_{i}:=|S_{i}|$,
respectively. Moreover, let $d\Sigma^{E}$ denote Lebesgue measure on $\Q_{\G}$ \footnote{More precisely, $d\Sigma^{E}$ is the standard Lebesgue measure on $\mathrm{I}_{\G}$ restricted to the open set $\Q_{\G}$.}. Then 
%%%
 \begin{equation}\label{eq:mu}
\mu_{\G}\left(d\Sigma^{E}\right):= H_{\G}\left(-\left(c+1\right)/2,-\left(s+1\right)/2,\Sigma^{E}\right)d\Sigma^{E}
 \end{equation}
 %%%
is a measure on $\Q_{\G}$. The image of $\mu_{\G}$ under the mapping $\Sigma^{E}\mapsto \Sigma^{-1}: \Q_{\G}\rightarrow \P_{\G}$ is a measure on $\P_{\G}$ given by
%%%
  \begin{equation}\label{eq:nu}
\nu_{\G}\left(d\Omega\right):=  H_{\G}\left(\left(c+1\right)/2, \left(s+1\right)/2, \Omega^{-E}\right)d\Omega,  
\end{equation}
%%%
where $d\Omega$ is Lebesgue measure on $\P_{\G}$ \cite{L07}.
  
  %========================================================
\subsection{Type I \& II Wishart distributions} 
We now introduce the Type I and Type II Wishart distributions from \cite{L07}. The Type I Wishart is a distribution defined on the cone $\Q_{\G}$.
The non-normalized density of this distribution is given by
%%%
  \begin{equation*}\label{eq:unnormalized_wishart}
\omega_{\Q_{\G}}\left(\alpha,\beta,U^{E},d\Sigma^{E}\right):=\exp\left\{-\tr\left(\Sigma U^{-1}\right)\right\}H_{\G}\left(\alpha,
\beta, \Sigma^{E}\right)\mu_{\G}\left(d\Sigma^{E}\right),
  \end{equation*}
  %%%
where $\left(\alpha,\beta\right)\in \R^{r}\times \R^{r-1}$ denotes the multi-shape parameter and  $U^{E}\in \Q_{\G}$ is the scale parameter. The normalized version of $\omega_{\Q_{\G}}$, denoted by $\W_{\Q_{\mathcal{G}}}$,  is defined for pairs of $\left(\alpha, \beta\right)$ such that  for every $U^{E}\in \Q_{\G}$
  \begin{equation}\label{eq:A1}
\tag{A1}\int_{\Q_{\G}}\omega_{\Q_{\G}}\left(\alpha,\beta,U^{E}, d\Sigma^{E}\right)< \infty \:\;\text{and} 
\end{equation}
\vspace{-.4cm}
\begin{equation}\label{eq:A2}
\tag{A2}\int_{\Q_{\G}}\omega_{\Q_{\G}}\left(\alpha,\beta,U^{E}, d\Sigma^{E}\right)/H_{\G}\left(\alpha,\beta, U^{E}\right)\;\;\text{ is functionally independent of $U^{E}$}.
  \end{equation}
The Type II  Wishart is a distribution on the cone $\P_{\G}$ with the non-normalized density 
  %%%
  \[
  \omega_{\P_{\G}}\left(\alpha,\beta,U^{E}, d\Omega\right):=\exp\left\{-\tr\left(\Omega U\right)\right\}H_{\G}\left(\alpha, \beta,
\Omega^{-E}\right)\nu_{\G}\left(d\Omega\right).
\]
%%%
Similarly, the normalized version of $ \omega_{\P_{\G}}$, denoted by $\W_{\P_{\mathcal{G}}}$,  is defined for pairs of $\left(\alpha, \beta\right)$ such that for every $U^{E}\in \Q_{\G}$
\begin{equation}\label{eq:B1}
\tag{B1}\int_{\P_{\G}}\omega_{\P_{\G}}\left(\alpha,\beta,U^{E}, d\Omega\right)< \infty\:\;\text{and}
\end{equation}
\vspace{-.4cm}
\begin{equation}\label{eq:B2}
\tag{B2}\int_{\P_{\G}}\omega_{\P_{\G}}\left(\alpha,\beta,U^{E}, d\Omega\right)/H_{\G}\left(\alpha, \beta, U^{E}\right)\:\; \text{is functionally independent of $U^{E}$}.
 \end{equation}
The space of multi-shape parameter for the family of Type I Wisharts, i.e., the set of pairs $\left(\alpha,\beta\right)$ that satisfy both conditions $\left(A1\right)$ and $\left(A2\right)$, is denoted by $\mathcal{A}$. Likewise, the space of multi-shape parameter for the family of Type II Wisharts, i.e., the set of pairs $\left(\alpha,\beta\right)$ that satisfy conditions $\left(B1\right)$  and $\left(B2\right)$, is denoted by $\mathcal{B}$.
  
  %=================================================================
%\subsection{The Type I \& II inverse Wisharts } The Type I inverse Wishart, denoted by
%$\IW_{\Q_{\G}}$, is the image of the Type I Wishart $\W_{\Q_{\G}}$ under the mapping $\Sigma^{E}\rightarrow \Sigma^{-1}$. Similarly, the Type II inverse Wishart, denoted by
%$\IW_{\P_{\G}}$, is the image of the Type II Wishart  $\W_{\P_{\G}}$ under the mapping $\Omega\rightarrow
%\Omega^{-E}$. We can show that
  %\begin{align*}
%\IW_{\Q_{\G}}\left(\alpha,\beta, U^{E}, d\Omega\right)&\propto H_{\G}\left(\alpha,\beta,
%\Omega^{-E}\right)\nu_{\G}\left(d\Omega\right),\;\text{and}\\
%\IW_{\P_{\G}}\left(\alpha,\beta, U^{E}, d\Sigma^{E}\right)&\propto H_{\G}\left(\alpha,\beta,
%\Sigma^{E}\right)\mu_{\G}\left(d\Sigma^{E}\right).
  %\end{align*}
%Note that $\IW_{\Q_{\G}}$ and $\IW_{\P_{\G}}$ are, respectively, distributions on
%$\P_{\G}$ and $\Q_{\G}$ with the space of  multi-shape parameter $\mathcal{A}$ and $\mathcal{B}$.
  %
%%===================================================================
\subsection{The LM  conjecture for identifying $\mathcal{A}$ and $\mathcal{B}$}\label{subsec:H}
%%%
After defining Type I \& II Wishart distributions, an important goal of Letac $\&$ Massam in \cite{L07} is to identify $\mathcal{A}$ and $\mathcal{B}$, the associated spaces of multi-shape parameters. When the underlying graph $\G$ is homogeneous both $\mathcal{A}$  and  $\mathcal{B}$ are completely identified in \cite{L07}, but when $\G$ is no longer homogeneous, these spaces are only partially identified. More precisely, Letac \& Massam \cite{L07} identify a subset of $\mathcal{A}$ and  a subset of $\mathcal{B}$ as follows.\\
\indent
Let $\mathcal{P}=\left(C_{1},\cdots,C_{r}\right)$ be a given perfect order of the cliques of $\G$ and $\left(S_{2}, \cdots, S_{r}\right)$ the corresponding
sequence of  separators. For each  separator $S\in\mathscr{S}_{\G}$ let $ J\left(\mathcal{P}, S\right):=\left\{j: S_{j}=S\right\}
$. A set associated with $\mathcal{P}$ and $\mathcal{A}$, denoted by $A_{\mathcal{P}}$, is the set of $\left(\alpha,\beta\right)\in \R^{r}\times
\R^{r-1}$ such that: 
\begin{itemize}
\renewcommand{\labelitemi}{a)}
\item $\sum_{j\in J\left(\mathcal{P}, S\right)}\alpha_{j}-\nu\left(s\right)\beta\left(S\right)=0$,\: for each  $S\neq S_{2}$, where $\nu(S)$, as before,  denotes the multiplicity of the separator $S$;
\renewcommand{\labelitemi}{b)}
\item $\alpha_{j}-\left(c_{j}-1\right)/2>0$,\: for each  $j=2,\ldots,r$;
\renewcommand{\labelitemi}{c)}
\item $\alpha_{1}-\delta_{2}>\left(s_{2}-1\right)/2$,\:  where  $\delta_{2}:=\sum_{j\in J\left(\mathcal{P}, S_{2}\right)}\alpha_{j}-\nu\left(S_{2}\right)\beta_{2}$.
\end{itemize}
Similarly, a set associated with $\mathcal{P}$ and $\mathcal{B}$, denoted by  $B_{\mathcal{P}}$, is the set of $\left(\alpha, \beta\right)$ such that:
\begin{itemize}
\renewcommand{\labelitemi}{a)}
\item $\sum_{j\in J\left(\mathcal{P}, S\right)}\left(\alpha_{j}+\left(c_{j}-s_{j}\right)/2 \right)-\nu\left(S\right)\beta\left(S\right)=0$,\: for each  $S\neq S_{2}$;
\renewcommand{\labelitemi}{b)}
\item $-\alpha_{j}-\left(c_{j}-s_{j}-1\right)/2>0$,\:  for each  $j=2,\ldots,r$  and  $-\alpha_{1}-\left(c_{1}-s_{2}-1\right)/2>0$ ;
\renewcommand{\labelitemi}{c)}
\item $-\alpha_{1}-\left(c_{1}-s_{2}+1\right)/2-\eta_{2}>\left(s_{2}-1\right)/2$ where $\eta_{2}:=\sum_{j\in J\left(\mathcal{P},
S_{2}\right)}\left(\alpha_{j}+\left(c_{j}-s_2\right)/2\right)-\nu\left(S_{2}\right)\beta_{2}$.
\end{itemize}
Theorems 3.3 \& 3.4 in \cite{L07} prove that if $\G$ is a non-complete decomposable graph, then $A_{\mathcal{P}}\subseteq \mathcal{A}$ and $B_{\mathcal{P}}\subseteq \mathcal{B}$. Therefore, $\bigcup_{\mathcal{P}}A_{\mathcal{P}}\subseteq \mathcal{A}$ and   $\bigcup_{\mathcal{P}}B_{\mathcal{P}}\subseteq \mathcal{B}$, where the subscript $\mathcal{P}$ runs through all perfect orders of the cliques of $\G$. When $\G$ is homogeneous Letac and Massam in \cite{L07} establish that $\bigcup_{\mathcal{P}}A_{\mathcal{P}}\subsetneqq \mathcal{A}$ and $\bigcup_{\mathcal{P}}B_{\mathcal{P}}\subsetneqq \mathcal{B}$, but in the case of an arbitrary non-homogeneous decomposable graph they conjecture that equalities hold. We now proceed to formally state the Letac-Massam conjecture.\\

\noindent
\textbf{The Letac-Massam (LM) Conjecture}. Let $\G$ be a non-homogeneous decomposable graph and let $\mathrm{Ord}(\G) $  denote the set of the perfect orders of the cliques of $\G$. Then
%%%
\begin{equation}\label{eq:LM1}
\tag{I }\bigcup_{\mathcal{P}\in\mathrm{Ord}(\G) }A_{\mathcal{P}}=\mathcal{A},\qquad\qquad\qquad\qquad\qquad\qquad\qquad\qquad\qquad\qquad\qquad\qquad\qquad\qquad\qquad
\end{equation}
\vspace{-.2cm}
\begin{equation}\label{eq:LM2}
\tag{II} \bigcup_{\mathcal{P}\in \mathrm{Ord}(\G) }B_{\mathcal{P}}=\mathcal{B}.\qquad\qquad\qquad\qquad\qquad\qquad\qquad\qquad\qquad\qquad\qquad\qquad\qquad\qquad\qquad\quad
\end{equation}
\begin{Rem}
Note that for each perfect order $\mathcal{P}=(C_1,\ldots,C_r)$ of the cliques of a decomposable graph $\G$ the sets $A_{\mathcal{P}}$ and $B_{\mathcal{P}}$, as manifolds, are of dimension $r+1$. Therefore, the LM conjecture asserts that $\mathcal{A}$ and $\mathcal{B}$ are also of dimension $r+1$.
\end{Rem}
%%
%%%

  %%%%%%%%%%%%%%%%%%%%%%%%%%%%%%%%%%%%%%%%%%%%%%%%%%%%%%%%%%%%%%%%%%%%%%%%%
 \section{The DAG Wishart distributions for directed Markov random fields}\label{sec:DAG Wisharts}

One of the main goals of this paper is to study the LM conjecture and formally  demonstrate that it does not hold in general. Our goal is slightly broader as we are also interested in understanding when exactly the LM conjecture does not hold. In particular, we aim to identify graph characteristics which lead to a violation of the LM conjecture. Our approach is to develop tools which will allow us to compare the Type I \& II Wishart distributions, respectively, with the generalized versions of Riesz distributions, by Andersson et al. \cite{A10} for perfect DAGs, and the DAG Wishart  distributions introduced by the present authors in \cite{BR11}. We demonstrate that relating the LM conjecture to the class of DAGs (and not just undirected graphical models) can provide valuable insights. Since we are able to completely characterize the domain of integrability of the Wisharts associated with DAG models. We begin with a compact review of the  DAG Wisharts given in \cite{BR11}. 
 
%===================================================
\subsection{Gaussian DAG models}\label{subsec:gaussian_over_D} Inference for Gaussian DAG models provide the main motivation for developing the DAG Wisharts in \cite{BR11}. We give a brief introduction here. Let $\D=\left(V, F\right)$ be a DAG with
$p$ vertices, i.e., $|V|=p$. For each $i,j\in V$ let the relation $j\preceq_{\D} i$ denote $i=j$ or $i\mapsto j$. The relation $\preceq_{\D}$ clearly defines a partial order on $V$. Since every partial order can be extended to a linear order  \cite{S30}, without loss of generality, we can assume that the vertices in $V$ are labeled $1, 2,\ldots, p$, and for each $i,j\in V$  if $i\rightarrow
j$, then $i>j$. This order corresponds to the parent order of the vertices of the DAG. Now let the random vector $\mathbf{X}=\left(X_{1},\ldots, X_{p}\right)^{\top}\in \R^{p}$ be a directed Markov random field (or DAG) over
$\D$. Thus $\mathbf{X}$ obeys the directed local Markov property with respect to $\D$, i.e.,
 \begin{equation}\label{eq:DLM}
 j\ind \mathrm{nd}\left(j\right)\setminus\mathrm{pa\left(j\right)}|\mathrm{pa}\left(j\right)\quad \forall j\in V.
 \end{equation}
If, in addition, $\mathbf{X}\sim \mathcal{N}_{p}\left(0,\Sigma\right)$, then a simple observation in \cite{A98} shows that the directed local Markov property in
Equation \eqref{eq:DLM} is satisfied if and only if $\Sigma\succ 0$ and
 \begin{equation}\label{eq:Requirement}
 \Sigma_{\nprec j]}=\Sigma_{\nprec j\succ}\left(\Sigma_{\prec j\succ}\right)^{-1}\Sigma_{\prec j]}\quad\forall j\in V,
 \end{equation}
where ${\nprec j]}:=\left\{\left(i,j\right): i : i\in \mathrm{nd}\left(j\right), i>j\right\}$,
$\nprec j\succ :=\left\{\left(k,i\right):  k\in \mathrm{nd}\left(j\right)\;\; k>j, \:\text{and}\: i\in \mathrm{pa}\left(j\right)\right\}$, $\prec j\succ :=\mathrm{pa}\left(j\right)$ and $\prec j]:=\mathrm{pa}(j)\times \{j\}$.\\
\noindent We define the Gaussian DAG model\footnote{Also said to be Gaussian Bayesian network.}, denoted by $\mathscr{N}\left(\D\right)$, to be the family of all centered Gaussian distribution $\mathcal{N}_{p}\left(0,\Sigma\right)$ which are directed Markov random fields over $\D$. It is easily seen that the distributions in
 $\mathscr{N}\left(\D\right)$ can be parameterized by the space of covariance matrices 
 \begin{equation}\label{eq:PDD}
\mathrm{PD}_{\D}:=\left\{\Sigma\succ 0: \Sigma_{\nprec j]}=\Sigma_{\nprec j\succ}\left(\Sigma_{\prec j\succ}\right)^{-1}\Sigma_{\prec j]}, \:\forall
j\in V\right\}.
 \end{equation}
These distributions can also be parametrized by the space of inverse-covariance matrices $\P_{\D}:=\left\{\Omega:
\Omega^{-1}\in \mathrm{PD}_{\D}\right\}$. Other important parameterizations of the distributions in $\mathscr{N}\left(\D\right)$ are available in terms of the modified Cholesky decompositions of the inverse-covariance matrices, i.e., $\Sigma^{-1}=L \Lambda L^{\top}$ such that $L$ is a lower triangular matrix with all diagonals equal to $1$, and $\Lambda$  is a diagonal matrix. Note that for two distinct vertices $i,j$, if $i$ is not a parent of $j$, then $L_{ij}=0$. We refer the reader to \cite{A98, BR11, BR12} for more details. 
 
 %==========================================================================
\subsection{The DAG Wishart distribution for perfect DAGs }\label{sub:dag_wishart}
 Let $\D$ be a perfect DAG. First note that a random vector $\mathbf{X}$ in $\R^{p}$ is a DAG over $\D$ if and only if it is an undirected graphical model over $\D^{\mathrm{u}}$, the undirected version of $\D$ (which is also necessarily decomposable) \cite{W80}. This implies that $\P_{\D}$ and $\mathrm{PD}_{\D}$ are, respectively, identical to $\P_{\D^{\mathrm{u}}}$ and
$\mathrm{PD}_{\D^{\mathrm{u}}}$ (see \S \ref{sub:gaussian_over_G} for definitions). In particular, $\P_{\D}$ is an open convex cone. The DAG Wishart distribution $\pi_{\P_{\D}}$, as we shall define here is a distribution on $\P_{\D}$ \cite{BR11}. We first define, $\widehat{\pi}_{\P_{\D}}$, the non-normalized version of $\pi_{\P_{\D}}$ as follows:
 \begin{equation}\label{eq:unnormalized_piD}
\widehat{\pi}_{\P_{\D}}\left(\eta, U, d\Omega\right):= \exp\left\{- \frac{1}{2} \tr\left( \Omega U\right) \right\}
\prod_{i=1}^{p}D_{jj}^{-\frac{1}{2}\eta_{j}+pa_{j}+2}d\Omega, 
 \end{equation}
where the multi-shape parameter $\eta$ lives in $\R^{p}$, $U\succ 0$, $D_{jj}:=\Sigma_{jj}-\Sigma_{\prec j]}^{\top}\left(\Sigma_{\prec j
\succ}\right)^{-1}\Sigma_{\prec j]}$ and $pa_{j}=|\mathrm{pa}\left(j\right)|$. From Theorem 4.1 in \cite{BR11} the domain of integrability of the DAG Wishart distribution can be fully characterized:
 \begin{equation*}
 \int_{\P_{\D}}\widehat{\pi}_{\P_{\D}}\left(\eta, U, d\Omega\right)<\infty\Longleftrightarrow \eta_{j}>pa_{j}+2\quad \forall j\in V.
  \end{equation*}
Moreover, if $\eta_{j}>pa_{j}+2\quad \forall j\in V$, then the normalizing constant is given by
 \begin{equation} \label{eq:normalizing_constant}
z_{\D}\left(U,\eta\right) := \prod_{j=1}^{p} \frac{\Gamma \left( \frac{\eta_{j}}{2} -
\frac{pa_{j}}{2} - 1 \right) 2^{\frac{\eta_{j}}{2} - 1}
\left(\sqrt{\pi}\right)^{pa_{j}} \det\left( U_{\prec j\succ}\right)^{\frac{\eta_{j}}{2} -
\frac{pa_{j}}{2} - \frac{3}{2}}}{\det\left( U_{\preceq j\succeq}\right)^{\frac{\eta_{j}}{2} - \frac{pa_{j}}{2} - 1}},
\end{equation} 
 where   $\preceq j\succeq:=\mathrm{pa}(j)\cup \left\{j\right\}$.
 \begin{Rem}\label{rem:cholesky_decomposition}
Let $\Omega\in \P_{\D}$ and $\Omega=L\Lambda L^{\top}$ be the modified Cholseky decompositions of $\Omega$. Then $L$ is a lower
triangular matrix with all diagonal entries equal to one and $L_{ij}=0\quad \forall \left(i,j\right)\notin F, $ and $\Lambda$ is a diagonal
matrix such that $\Lambda_{jj}=\left(\Sigma_{jj| \prec j\succ}\right)^{-1}=D_{jj}^{-1}$  \cite{W80}.
 \end{Rem}
 %\begin{Rem}
%There are two ways that the restriction to perfect DAGs in the definition of the distribution $\pi_{\P_{\D}}$ can be discarded; first by considering the density with respect to a Hausdorff measure, and second by projecting the elements of  $\P_{\D}$ along the edges of $\D$ to obtain an open set in $\R^{|F|}$ (see \cite{BR11} for details).
 %\end{Rem}
%The inverse DAG Wishart  distribution, denoted by $\pi_{\Q_{\D}}$, is a distribution on 
%$\Q_{\D}:=\Q_{\D^{\mathrm{u}}}$ and is defined as the image of the DAG Wishart distribution $\pi_{\P_{\D}}$ under the mapping $\Omega:\mapsto \Omega^{-F}$. Recall that $\Omega^{-F}:=\left(\Omega^{-1}\right)^{F}$ is indeed an element of $\Q_{\D}$ with the unique positive definite completion $\Omega^{-1}$  in $\mathrm{PD}_{\D}$. We have
 %\[
%\pi_{\Q_{\D}}\left(\eta, U, d\Sigma^{F}\right)=z_{\D} \left(U,\eta\right)^{-1}\exp\left\{-\frac{1}{2}\tr\left(\Sigma^{-1}U\right)
%\right\}\prod_{i=1}^{p}D_{jj}^{-\frac{1}{2}\eta_{j}}\det\left(\Sigma_{\prec
%j\succ}\right)^{-1}d\Sigma^{F}.
 %\]  
 %The derivation of the induced measure on $\Q_{\D}$ is also given in \cite{BR11}.
 
 %=============================================================================
\section{Comparing decomposable Type II Wisharts with perfect DAG Wisharts} \label{sub:comparison_non_hom}
%%%%%%
We now proceed to compare Type II Wisharts for decomposable graphs with DAG Wisharts for perfect DAGs. We had noted earlier that the class of decomposable graphs are Markov equivalent to the class of perfect DAGs. In particular, every probability distribution that obeys the global Markov property with respect to a decomposable graph  also obeys the local directed Markov property with respect to a perfect DAG version and vice versa. In the Gaussian setting this means that if $\D$ is a perfect DAG version of $\G$, then $\mathcal{N}\left(\G\right)$ and $\mathcal{N}\left(\D\right)$ define the same family of $p$-variate Gaussian distributions. Consequently,  the family of Type II Wisharts and the family of DAG Wisharts are both defined on $\P_{\G}=\P_{\D}$, the space of inverse-covariance matrices.  Therefore, a relevant question is how the functional form of the Type II Wishart density compares with that of the DAG Wishart. \\
 \indent
First, to facilitate comparison, we re-parameterize the DAG Wishart $\pi_{\P_{\D}}$  as follows. For each $j=1,\ldots, p$ let the expressions of the form  $-\frac{1}{2}\eta_{j}+pa_{j}+2$   in Equation \eqref{eq:unnormalized_piD} be replaced by $\gamma_{j}$, and let $U$ be replaced by $2U$. Let $\gamma:=\left(\gamma_{1},\ldots,\gamma_{p}\right)$. Under this parametrization, with a slight abuse of notation, we write 
 \[
 \pi_{\P_{\D}}\left(\gamma, U, d\Omega\right)=z_{\D}\left(U, \gamma\right)^{-1}\exp\left\{-\tr\left( \Omega U\right) \right\}
\prod_{i=1}^{p}D_{jj}^{\gamma_{j}}\;d\Omega\,,
  \]
where  the normalizing constant  $z_{\D}\left(\gamma\right)$ exists if and only if   $\gamma_{j}<pa_{j}/2+1$ for each  $j=1,\ldots,p$  and 
\begin{equation}\label{eq:normalizing_constant_rewritten}
z_{\D}\left(U,\gamma\right) = \prod_{j=1}^{p} \frac{\Gamma \left( -\gamma_{j}+\frac{pa_{j}}{2}+1 \right) 
\left(\sqrt{\pi}\right)^{pa_{j}}}{ U_{jj|\prec j\succ}^{-\gamma_{j} +\frac{pa_{j}}{2} +1}\det\left( U_{\prec j\succ}\right)^{\frac{1}{2}}}.
\end{equation}
Note also that for each perfect directed version $\D$ of $\G$ the exponential term $\exp\left\{-\tr \left( \Omega^{-1} U \right)\right\}$ is a common term in both the Type II Letac-Massam Wishart $\W_{\P_{\G}}$ and the DAG Wishart $\pi_{\P_\D}$. Moreover, the DAG Wishart $\pi_{\P_\D}$ has additional terms only of the form $D_{jj}^{\gamma_j}$. Before comparing $\W_{\P_{\G}}$ and $\pi_{\P_{\D}}$ more generally, we first illustrate the comparison with an example. 
 \begin{figure}[ht]
\centering
\subfigure[]{
\includegraphics[width=3.3cm]{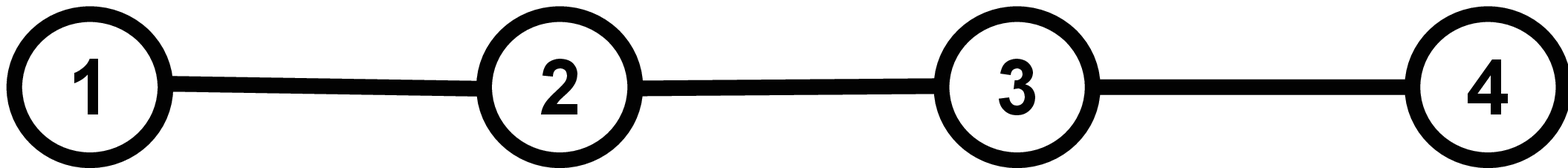}
\label{fig:graph-patha}
}
\hspace{2cm}
\subfigure[]{
\includegraphics[width=3.3cm]{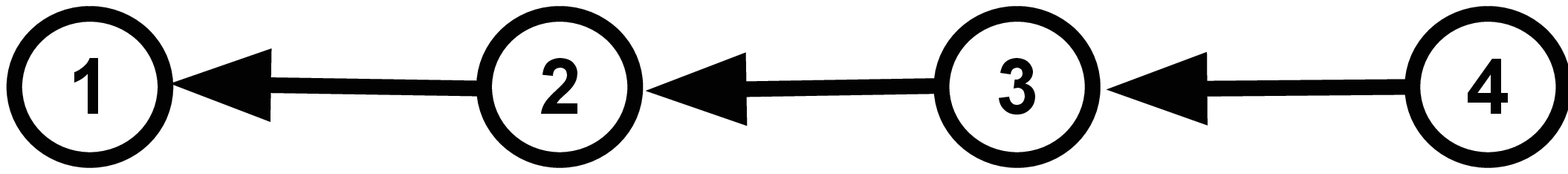}
\label{fig:graph-pathb}
}
\hspace{2cm}
\subfigure[]{
\includegraphics[width=3.3cm]{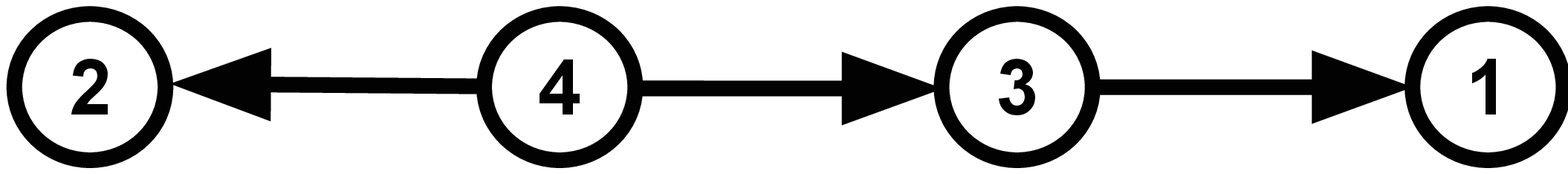}
\label{fig:graph-pathc}
}

\caption{ A $4$-path $A_4$ with  two directed versions of it. }
\end{figure}

\begin{Ex}\label{ex:comparison} Let $\G$ be the $4$-path given in Figure \ref{fig:graph-patha}. Note that $\G$ is a non-homogeneous decomposable graph. It is clear that the DAG given in Figure \ref{fig:graph-pathb} is a perfect DAG version of $\G$. The cliques of $\G$ are  $C_{1} = \left\{1,2\right\}, C_{2} =
\left\{2,3\right\}, C_{3} = \left\{3,4\right\}$ and the separators are  $S_{2}= \left\{2\right\}, S_{3} = \left\{3\right\}$.  Let $\Sigma=\Omega^{-1}$. To compare the corresponding Letac-Massam Type II Wishart and the DAG Wishart for this graph, we rewrite the Markov ratio present in the  density of  $\W_{P_G}$ as follows. 
\begin{eqnarray}\label{eq:comparison_A4}
\notag\frac{\prod_{i=1}^3 \det\left( \Sigma_{{C}_{j}} \right)^{\alpha_{j}+\frac{c_{j}+1}{2}}}
{\prod_{j=2}^3 \det\left( \Sigma_{S_{j}} \right)^{\beta_{j}+\frac{s_{j}+1}{2}}}&=&\frac{\det\left( \Sigma_{\preceq 1\succeq} \right)^{\alpha_{1}+\frac{3}{2}}\det\left(
\Sigma_{\preceq 2\succeq} \right)^{\alpha_{2}+\frac{3}{2}}\det\left( \Sigma_{\preceq 3\succeq} \right)^{\alpha_{j}+\frac{3}{2}}}
{\det\left( \Sigma_{\prec 1\succ} \right)^{\beta_{2}+1}\det\left( \Sigma_{\prec 2\succ} \right)^{\beta_{3}+1}}\\
&=&D_{11}^{\alpha_{3}+\frac{3}{2}}D_{22}^{\alpha_{1}+\frac{3}{2}}D_{33}^{\alpha_{2}+\frac{3}{2}}D_{44}^{\alpha_{2}+\frac{3}{2}}\Sigma_{22}^{\alpha_{1}-\beta_{2}+\frac{1}{2}}\Sigma_{33}^{\alpha_{3}-\beta_3+\frac{1}{2}}
\end{eqnarray}
As shown in section 3.4 of \cite{L07}, one can check that $\mathcal{B}=B_{\mathcal{P}_{1}}\cup B_{\mathcal{P}_{2}}$ where $\mathcal{P}_{1}=\left(C_{1}, C_{2}, C_{3}\right)$ and
$\mathcal{P}_{2}=\left(C_{2}, C_{1}, C_{3}\right)$ are the perfect orders of the cliques of $\G$ and
\begin{align*}
B_{\mathcal{P}_{1}}&=\left\{\left(\alpha_{1},\alpha_{2},\alpha_{3},\beta_{2},\beta_{3}\right): \alpha_{i}<0, -\alpha_{1}-\alpha_{2}+\beta_{2}-1>0,
\alpha_{3}-\beta_{3}+\frac{1}{2}=0\right\},\\
B_{\mathcal{P}_{2}}&=\left\{\left(\alpha_{1},\alpha_{2},\alpha_{3},\beta_{2},\beta_{3}\right): \alpha_{i}<0, -\alpha_{2}-\alpha_{3}+\beta_{3}-1>0,
\alpha_{1}-\beta_{2}+\frac{1}{2}=0\right\}.
\end{align*}
 Note that unless $\alpha_{3}-\beta_{3}+\frac{1}{2}=0$ and $\alpha_{1}-\beta_{2}+\frac{1}{2}=0$ ( i.e., $\left(\alpha, \beta\right)$ is restricted to the intersection of $B_{\mathcal{P}_{1}}$ and $B_{\mathcal{P}_{2}}$) the expression in Equation \eqref{eq:comparison_A4} contains some terms different from the product of $D_{jj}$ to some powers. Since the DAG Wishart has polynomial terms only of the form $D_{jj}^{\gamma_j}$, it is clear that for this directed version of $\G$,  $\W_{\P_{\G}}$ and $\pi_{\P_{\D}}$ are not directly comparable. Note that we did not need to account for two additional perfect orders $P_1^{\prime} = (C_3 , C_2 , C_1 )$ and $P_2^{\prime} =(C_2 , C_3 , C_1 )$ since it has been shown in \cite{L07} that $\mathcal{B}_{P_1^{\prime}}= \mathcal{B}_{P_1}$ and $\mathcal{B}_{P_2^{\prime}}=\mathcal{B}_{P_2}$. Now consider the comparison with the perfect DAG version given in Figure \ref{fig:graph-pathc}. In this case the cliques are $C_{1}=\left\{2,4\right\}, C_{2}=\left\{3,4\right\}, C_{3}=\left\{1,3\right\}$ and the separators are $S_{2}=\left\{ 4\right\}$ , $S_{3}=\left\{3\right\}$. By a
similar calculation as that in Equation \eqref{eq:comparison_A4} we  obtain
\begin{equation*}
\frac{\prod_{i=1}^3 \det\left( \Sigma_{{C}_{j}} \right)^{\alpha_{j}+\frac{c_{j}+1}{2}}}
{\prod_{j=2}^3 \det\left( \Sigma_{S_{j}}
\right)^{\beta_{j}+\frac{s_{j}+1}{2}}}=D_{11}^{\alpha_{1}+\frac{3}{2}}D_{22}^{\alpha_{3}+\frac{3}{2}}D_{33}^{\alpha_{2}+\frac{3}{2}}D_{44}^{\alpha_{1}+\alpha_{2}-\beta_{2}+2}.
\end{equation*}
Therefore, for this directed version of $\G$, the family of Type II Wisharts, restricted to $\mathcal{B}_{\mathcal{P}_{1}}$, is a subfamily of the family of DAG Wisharts.
\end{Ex}
\noindent
In Example \ref{ex:comparison} we illustrated the fact that although $\W_{\P_{\G}}$ does not, necessarily, compare with  $\pi_{\P_{\D}}$ for any arbitrary perfect DAG version  $\D$ of $\G$, it is however comparable with some particular DAGs. We will show next that this conclusion can be generalized to any decomposable graphs. To this end, we proceed with a few useful lemmas. In particular, we introduce tools that will allow us to relate decomposable graphs with a given perfect ordering of its cliques with perfect DAGs and vice versa. These tools turn out to be critical ingredients in comparing the  undirected Letac-Massam Wisharts to the directed DAG Wisharts. Before we proceed with the next lemma we introduce some convenient notation and a definition.
\begin{Notation}\label{not:->}
Let $\D=(V,F)$ be a DAG and let $A,B\subset V$. Then $A\to B$ denotes the fact that there exist a vertex $v\in A\setminus B$ and a vertex $v'\in B$ such that $v\to v'$.
\end{Notation}
\begin{definition}
Let $\mathcal{P}=\left(C_{1},\ldots, C_{r}\right)$ be a perfect order of the cliques of a decomposable graph $\G$. A DAG version $\D$ of $\G$  is said to be induced by $\mathcal{P}$ if $H_{1}, \dots, H_{r-1}$ are all ancestral in $\D$. 
\end{definition}
\begin{Rem}
Note that in terms of Notation \ref{not:->} a DAG version $\D$ of $\G$ is induced by $\mathcal{P}=\left(C_{1},\ldots, C_{r}\right)$ if and only if $C_i\to C_j$ implies that $i< j$. 
\end{Rem}
%%%%
	
 \begin{lemma}\label{lem:combine}
Let $\G$ be a non-complete decomposable graph.
\begin{enumerate}
\item[a)] Let  $\mathcal{P}=\left(C_{1}, \ldots, C_{r}\right)$ be a perfect order of the cliques of a decomposable graph $\G$. Then every DAG version of $\G$ induced by $\mathcal{P}$ is a perfect DAG. Moreover , there exists a perfect DAG  version $\D$ of $\G$ induced by $\mathcal{P}$ such that $S_{2}$ is ancestral in $\D$. 
\item[b)] Conversely, suppose $\D$ is a perfect DAG version of $\G$. Then there exists a perfect order $\mathcal{P}$, of the cliques of $\G$, such that $\D$ is induced by $\mathcal{P}$.
\end{enumerate}
 \end{lemma}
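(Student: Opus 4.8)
The plan is to treat the two directions separately, using the equivalent formulation noted in the Remark above (that $\D$ is induced by $\mathcal{P}=(C_1,\dots,C_r)$ precisely when each history $H_j$ is ancestral in $\D$) together with the standard residual/separator properties of a perfect order of cliques recalled in \S\ref{sub:decomp_ug} and Lauritzen \cite{L96}.

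For part a), I would first prove ``induced $\Rightarrow$ perfect''. Fix a DAG version $\D$ induced by $\mathcal{P}$ and a vertex $v$, and let $j$ be the unique index with $v\in R_j$ (so $v\in H_j$). Since $\D$ is induced by $\mathcal{P}$, the set $H_j$ is ancestral, whence $\mathrm{pa}(v)\subseteq H_j$; and as parents are neighbours in $\G=\D^{\mathrm{u}}$, we get $\mathrm{pa}(v)\subseteq \mathrm{ne}(v)\cap H_j$. The standard fact that a residual vertex has all of its earlier neighbours inside the separator, $\mathrm{ne}(v)\cap H_{j-1}\subseteq S_j\subseteq C_j$, combined with $R_j\subseteq C_j$, gives $\mathrm{ne}(v)\cap H_j\subseteq C_j\setminus\{v\}$. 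Hence $\mathrm{pa}(v)\subseteq C_j\setminus\{v\}$, which is complete, so $\mathrm{pa}(v)$ induces a complete subgraph; as $v$ was arbitrary, $\D$ has no immorality and is perfect. For the existence statement I would exhibit one such $\D$: order the vertices by listing the residuals as consecutive blocks $R_1,R_2,\dots,R_r$, and inside $R_1=C_1$ place the vertices of $S_2=C_1\cap C_2$ first; then orient every edge of $\G$ from its smaller to its larger endpoint in this order (so the smaller endpoint is the parent). The resulting $\D$ is acyclic with $\D^{\mathrm{u}}=\G$, each $H_j$ is an initial segment and hence ancestral (a parent is always smaller), so $\D$ is induced by $\mathcal{P}$ and therefore perfect by the previous step; and since $S_2$ is an initial segment contained in $R_1$, it too is ancestral. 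Here $r\ge 2$, i.e.\ the very existence of $S_2$, is exactly where the hypothesis that $\G$ is non-complete is used.

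For part b) I would argue by induction on $|V|$, removing a sink. Given a perfect DAG $\D$, choose a sink $z$ (no children); since $\D$ is perfect, $\mathrm{ne}(z)=\mathrm{pa}(z)$ is complete, so $z$ is simplicial and $\mathrm{fa}(z)$ is the unique clique containing $z$. Deleting $z$ leaves a perfect DAG $\D'=\D_{V\setminus\{z\}}$ (no new immorality can appear, since $z$ is a parent of no vertex) whose undirected version $\G'$ is decomposable, so by the inductive hypothesis there is a perfect order $\mathcal{P}'=(C_1',\dots,C_{r'}')$ of $\G'$ inducing $\D'$. I then reinsert $z$ in two cases: if $\mathrm{pa}(z)$ is properly contained in some $C_i'$, append $\mathrm{fa}(z)$ as a new last clique; if $\mathrm{pa}(z)=C_k'$ is itself a clique of $\G'$, replace that entry in place by $\mathrm{fa}(z)=C_k'\cup\{z\}$. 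In either case one verifies that the clique set of $\G$ is recovered correctly, that the running-intersection property persists (the separators of $\mathcal{P}'$ are unchanged, and in the appending case the only new separator is $\mathrm{pa}(z)$, which lies in an earlier clique), and that every history is either unchanged or gains only $z$, whose parents $\mathrm{pa}(z)$ already lie in it; hence all histories remain ancestral and $\D$ is induced by the new order.

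The residual/separator bookkeeping above is routine; the genuine obstacle is part b), namely producing a perfect order whose histories are \emph{simultaneously} ancestral. The naive idea of ordering the cliques of $\G$ by a topological rank read off from $\D$ does not work---an admissible first (ancestral) clique need not be the one a clique-rank order selects---so the delicate point is to satisfy the running-intersection property and ancestrality at once. The sink-removal induction is precisely what reconciles these two requirements, because a sink of a perfect DAG is always simplicial and its parents already form a complete set contained in an earlier clique.
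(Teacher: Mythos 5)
Your proposal is correct and follows essentially the same route as the paper: part a) rests on the residual/separator structure of a perfect order together with an explicit vertex ordering that makes $S_2$ and every history an initial segment (hence ancestral), and part b) is the same induction on $|V|$ that deletes a sink $z$, notes $\mathrm{fa}(z)$ is the unique clique containing $z$, and reinserts it with the same two-case split according to whether $\mathrm{pa}(z)$ is itself a clique of the smaller graph. Your local variations — a direct verification that $\mathrm{pa}(v)\subseteq C_j\setminus\{v\}$ in place of the paper's minimal-counterexample contradiction, and appending $\mathrm{fa}(z)$ at the end of the order (valid because $z$ is a sink, so all histories stay ancestral) where the paper inserts it at a position dictated by the clique-to-clique arrow relation — are sound simplifications rather than a different method.
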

 \begin{proof}
a)~ Suppose, to the contrary, that $\D$ is not perfect. Let  $j$ be the smallest integer such that $D_{H_{j}}$, the induced DAG on $H_{j}$, is not perfect. It is clear that $1<j\leq r$.  Let $v\rightarrow v'\leftarrow v''$ be an immorality in $\D_{H_{j}}$. This, in particular, implies that there are two distinct cliques $C_{j_{1}}$ and $C_{j_{2}}$, with subscript $j_{1}, j_{2}\leq j$, such that they contain $v,v'$ and $v', v''$, respectively.  Since $j_{1}$ and $j_{2}$ are distinct we may assume that $j_{1}<j$. But since $H_{j-1}$ is ancestral and  $v''$ is a parent of $v'\in H_{j-1}$ we must have $v''\in H_{j-1}$. This contradicts the fact that the induced DAG on $H_{j-1}$ is perfect. \\
\indent Now we show that in particular there exists a DAG $\D$ induced by $\mathcal{P}$ such that $S_2$ is ancestral in $\D$. First consider the case where there are only two cliques. We start with relabeling the vertices in $S_{2}$, $H_{1}\setminus S_{2}$ and $R_{2}$, respectively, in a decreasing order. If $\D$ is the DAG version of $\G$ induced by this order, then $S_{2}$ and  $H_{1}$ are ancestral in $\D$. Now suppose that such a DAG version exists for any decomposable graph with number of cliques less than $r \geq 3$. By the mathematical induction there exists a DAG version $\D'$ of $\G_{H_{r-1}}$ such that $S_{2}, H_{1}, \ldots, H_{r-2}$ are ancestral in $\D'$. Without loss of generality, we can assume that the vertices in $\D'$ are labeled from $p, \ldots, p-|R_{r}|$. Let us label the vertices in $R_{r}$ from $1,\ldots, |R_{r}|$ and let $\D$ be the DAG version of $\G$ induced by this order. One can easily check that $\D$ has the desired properties.\\
%%%%%%%%%%%
\indent b)~ Suppose that by mathematical induction the lemma holds for any DAG with fewer than $p$ vertices. Now we assume that $\G$ is a decomposable graph with $p$ vertices. Clearly, we can assume that $p\ge 2$. Let $\G^{\prime}$ be the induced graph on $V\setminus\{1 \}$. By our induction hypothesis, there is a perfect order $\mathcal{P}^{\prime}=(C_1,\ldots, C_k)$ of $\G^{\prime}$ such that $\D^{\prime}$, the induced DAG on $V\setminus\{ 1\}$, is induced by $\mathcal{P}^{\prime}$. Note that $C:=\preceq 1\succeq=$ is a clique of $\G$. Consider two possible cases:
\begin{enumerate}
\item[a)]  There is an $i$ such that $C_i$ is not a clique in $\G$. Then $C=C_i\cup \{1\}$. This implies that for each $j\neq i$, $C_j$ remains a clique in $\G$. Let us replace $C_i$ with $C$ and define 
\[
\mathcal{P}=(C_1,\ldots,\underbrace{C}_{i},\ldots, C_k).
\]
 One can easily check that $\mathcal{P}$ is a perfect order of $\G$, and $\D$ is induced by $\mathcal{P}$. 
\item[b)] For every $i=1,\ldots,k$, $C_i$ is a clique in $\G$. Let
\[
i_1:=\max\{i:\: C_i\to C\} \:\; \text{and} \:\;  i_2:=\min\{i:\: C \to C_i \}.
\] 
We use the convention that $\max\emptyset=-\infty$ and $\min\emptyset=+\infty$. First suppose $i_1,i_2$ are both finite. Thus we have $C_{i_1}\to C\to C_{i_2}$ and therefore $i_1< i_2$, because by our induction hypothesis the histories of $\mathcal{P}'$ are ancestral in $\D'$. One can check that $\mathcal{P}=(C_1,\ldots,C_{i_1},C,\ldots,C_k)$ is then a perfect order of $\G$ and $\D$ is induced by $\mathcal{P}$. If $i_1=+\infty$ or $i_2=-\infty$, then by appending the clique $C$  at the end or at the beginning of $(C_1,\ldots, C_k)$, respectively, we obtain a perfect order $\mathcal{P}$ and in either case $\D$ is induced by such $\mathcal{P}$.
\end{enumerate}
\end{proof}
%%%
\begin{lemma}\label{lem:ancestral}
Let $\D$ be a DAG and let $\mathbf{X}\sim \mathcal{N}_{p}\left(0, \Sigma\right)\in \mathscr{N}\left(\D\right)$.  Suppose  $\Sigma^{-1}=\Omega=LD^{-1}L^{\top}$ is the modified  Cholesky decomposition of $\Sigma^{-1}$. Then we have:
\begin{itemize}
\renewcommand{\labelitemi}{i)}
\item For each $i,j\in V$ if $i\in \mathrm{pa}\left(j\right)$, then $L_{ij}=-\beta_{ji}$, where $\beta_{ji}$ is the partial regression coefficient  of $X_{i}$ in the linear regression of $X_{j}$ on $\mathbf{X}_{\prec i\succ}$  and $D_{jj}=\Sigma_{jj|\prec j\succ}$.
\renewcommand{\labelitemi}{ii)}
\item If $A$ is an ancestral subset of $V$, then $\left(\Sigma_{A}\right)^{-1}=L_{A}D_{A}^{-1}L_{A}^{\top}$. In particular, $\det\left(\Sigma_{A}\right)=\prod_{j\in A}D_{jj}$ (also see \cite{KR12} for a related result).
\end{itemize}
\end{lemma}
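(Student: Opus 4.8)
The plan is to obtain both claims from the recursive (structural-equation) representation of a Gaussian DAG model together with the uniqueness of the modified Cholesky decomposition. Let $\beta_{ji}$ denote the coefficient of $X_i$ in the least-squares regression of $X_j$ on its parents $\mathbf{X}_{\prec j\succ}$, and set $\varepsilon_j:=X_j-\mathbb{E}[X_j\mid \mathbf{X}_{\prec j\succ}]$, so that $\mathrm{Var}(\varepsilon_j)=\Sigma_{jj|\prec j\succ}$. For jointly Gaussian $\mathbf{X}$ the conditional mean is linear, giving $X_j=\sum_{i\in\mathrm{pa}(j)}\beta_{ji}X_i+\varepsilon_j$ for every $j$. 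Assembling these $p$ equations and letting $B$ be the matrix with $B_{ji}=\beta_{ji}$ for $i\in\mathrm{pa}(j)$ and $B_{ji}=0$ otherwise, I would write the system as $(I-B)\mathbf{X}=\varepsilon$ with $\varepsilon=(\varepsilon_1,\dots,\varepsilon_p)^\top$. Under the standing convention that $i\to j$ forces $i>j$, the nonzero off-diagonal entries of $B$ sit strictly above the diagonal, so $I-B$ is upper triangular with unit diagonal and $(I-B)^\top$ is lower triangular with unit diagonal.

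The one substantive step is to show that $\mathrm{Cov}(\varepsilon)$ is diagonal; I expect this to be the crux, as it is the only place the Markov assumption enters. By construction $\varepsilon_j$ is orthogonal to $\mathbf{X}_{\prec j\succ}$, and the directed local Markov property \eqref{eq:DLM} gives $X_j\ind \mathbf{X}_{\mathrm{nd}(j)\setminus\mathrm{pa}(j)}\mid \mathbf{X}_{\prec j\succ}$; together, and using joint normality, these yield $\varepsilon_j\ind\mathbf{X}_{\mathrm{nd}(j)}$. Because every directed edge decreases the label, the descendants of $j$ all carry labels smaller than $j$, so any $k>j$ is a non-descendant of $j$ and moreover $\mathrm{pa}(k)\subseteq\{v:v>k\}\subseteq\mathrm{nd}(j)$. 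Hence $\varepsilon_k$, being a linear function of $X_k$ and $\mathbf{X}_{\mathrm{pa}(k)}$, is a function of $\mathbf{X}_{\mathrm{nd}(j)}$ and is therefore uncorrelated with $\varepsilon_j$. This proves $\mathrm{Cov}(\varepsilon)=D$ with $D$ diagonal and $D_{jj}=\Sigma_{jj|\prec j\succ}$. Taking covariances in $(I-B)\mathbf{X}=\varepsilon$ then gives $(I-B)\Sigma(I-B)^\top=D$, that is $\Sigma^{-1}=(I-B)^\top D^{-1}(I-B)$; putting $L:=(I-B)^\top$ exhibits the factorization $\Sigma^{-1}=LD^{-1}L^\top$ with $L$ lower triangular, unit-diagonal, and $L_{ij}=(I-B)_{ji}=-\beta_{ji}$ for $i\in\mathrm{pa}(j)$. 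By uniqueness of the modified Cholesky decomposition this $L$ and $D$ coincide with those in the statement, which establishes i).

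For ii) the key observation is that when $A$ is ancestral we have $\mathrm{pa}(j)\subseteq A$ for each $j\in A$, so the $j$-th structural equation involves only coordinates indexed by $A$. Equivalently, the rows of $I-B$ indexed by $A$ are supported on the columns indexed by $A$, and the restricted system reads $(I-B)_A\mathbf{X}_A=\varepsilon_A$, where $(\,\cdot\,)_A$ denotes the principal submatrix or subvector on $A$. Since $D$ is diagonal, $\mathrm{Cov}(\varepsilon_A)=D_A$ remains diagonal, and taking covariances gives $(I-B)_A\Sigma_A(I-B)_A^\top=D_A$. The triangular factor $(I-B)_A$ is nonsingular, so inverting and noting $(I-B)_A^\top=L_A$ yields $(\Sigma_A)^{-1}=L_AD_A^{-1}L_A^\top$. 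Finally $L_A$ is lower triangular with unit diagonal, whence $\det L_A=1$ and $\det\!\big((\Sigma_A)^{-1}\big)=\det(D_A^{-1})=\prod_{j\in A}D_{jj}^{-1}$, giving $\det(\Sigma_A)=\prod_{j\in A}D_{jj}$. Beyond the diagonality of $\mathrm{Cov}(\varepsilon)$, every remaining step is routine bookkeeping once the triangular structure and the ancestrality of $A$ are in place.
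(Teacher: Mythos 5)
Your proof is correct, and it takes a route that genuinely differs from the paper's in both parts. For part i) the paper gives no self-contained argument: it simply points to Equation \eqref{eq:Requirement} and defers to the references; your structural-equation derivation --- writing $(I-B)\mathbf{X}=\varepsilon$ and proving that $\mathrm{Cov}(\varepsilon)$ is diagonal from the directed local Markov property --- supplies exactly the content hidden in those citations, and it correctly isolates the diagonality of the error covariance as the one place where the Markov assumption enters. For part ii) the two arguments diverge more substantially: the paper first shows that ancestrality of $A$ implies $\mathbf{X}_A\sim\mathcal{N}_{|A|}\left(0,\Sigma_A\right)\in\mathscr{N}\left(\D_A\right)$, then applies part i) to this marginal model to get a modified Cholesky decomposition $\left(\Sigma_A\right)^{-1}=KF^{-1}K^{\top}$, and finally matches $K$ with $L_A$ and $F$ with $D_A$ entrywise, using that the parents of each $j\in A$ are unchanged under marginalization. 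You instead restrict the structural-equation system itself: ancestrality makes the rows of $I-B$ indexed by $A$ supported on the columns indexed by $A$, so $(I-B)_A\mathbf{X}_A=\varepsilon_A$, and the factorization of $\left(\Sigma_A\right)^{-1}$ falls out by taking covariances, with no appeal to the marginal DAG-model property or to uniqueness of the decomposition for $\Sigma_A$. Both arguments hinge on the same combinatorial fact --- $\mathrm{pa}\left(j\right)\subseteq A$ for every $j\in A$ --- but your version is more elementary and fully self-contained, whereas the paper's version factors through the marginalization property of ancestral sets, a structural fact about Gaussian DAG models that it has reason to emphasize and reuse elsewhere.
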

%%%
\begin{proof}\
\begin{itemize}
\renewcommand{\labelitemi}{i)}
\item This can be proved by using Equation \eqref{eq:Requirement} (see  \cite{BR11, W80} for details).
\renewcommand{\labelitemi}{ii)}
\item Since $A$ is ancestral in $\D$, by using Equation \eqref{eq:PDD}, one can easily show that $\mathbf{X}_{A}\sim\mathcal{N}_{|A|}\left(0, \Sigma_{A}\right)\in \mathscr{N}\left(\D_{A}\right)$. Now let $\left(\Sigma_{A}\right)^{-1}=KF^{-1}K^{\top}$ be the modified Cholesky decomposition of $\left(\Sigma_{A}\right)^{-1}$. Part i) and the fact that $A$ is ancestral imply that $K_{ij}=-\beta_{ij}=L_{ij}$ whenever $i\in \mathrm{pa}\left(j\right)$, and $F_{jj}=\Sigma_{jj|\prec j\succ}=D_{jj}$. This implies that $K=L_{A}$ and $F=D_{A}$ 
\end{itemize}
\end{proof}

%%%
 \begin{Theorem}\label{thm:comparison}
For every perfect order $\mathcal{P}=\left(C_{1},\ldots, C_{r}\right)$ of the cliques of a decomposable $\G$ there exists a perfect DAG version $\D$
of $\G$ such that the family of distributions $\W_{\P_{\G}}$, restricted to $\mathcal{B}_{\mathcal{P}}$, is a subfamily of the family of $ distributions \; \pi_{\P_{\D}}$.
\end{Theorem}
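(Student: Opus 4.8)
The plan is to pick, for the given perfect order $\mathcal{P}=(C_1,\ldots,C_r)$, a perfect DAG version $\D$ of $\G$ that is \emph{adapted} to $\mathcal{P}$, and then to show that on this $\D$ the polynomial part of the Type II density collapses to a pure product of conditional variances $D_{jj}$, which is exactly the functional form carried by $\pi_{\P_{\D}}$. Concretely, I would invoke Lemma \ref{lem:combine}(a) to obtain a perfect DAG version $\D$ induced by $\mathcal{P}$ (so that every history $H_1,\ldots,H_{r-1}$ is ancestral in $\D$) with the additional property that the first separator $S_2$ is ancestral in $\D$. Writing $\Sigma=\Omega^{-1}$ and absorbing the Jacobian factor \eqref{eq:nu} into the Markov ratio, the non-normalized Type II density becomes $\exp\{-\tr(\Omega U)\}$ times $\prod_{i=1}^r\det(\Sigma_{C_i})^{a_i}\big/\prod_{j=2}^r\det(\Sigma_{S_j})^{b_j}$, with $a_i:=\alpha_i+(c_i+1)/2$ and $b_j:=\beta_j+(s_j+1)/2$. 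Since the exponential factor $\exp\{-\tr(\Omega U)\}$ is common to both $\W_{\P_{\G}}$ and $\pi_{\P_{\D}}$, and since $\tr(\Omega U)$ depends on $U$ only through its $\G$-entries (so the scales are matched by the same $U^E$), the whole problem reduces to rewriting this ratio of clique and separator determinants as a single monomial $\prod_{v}D_{vv}^{\gamma_v}$.

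Next I would exploit ancestrality. Because $\D$ is induced by $\mathcal{P}$, each history $H_j$ is ancestral, so Lemma \ref{lem:ancestral}(ii) gives $\det(\Sigma_{H_j})=\prod_{v\in H_j}D_{vv}$. Combining this with the standard decomposable factorization $\det(\Sigma_{H_j})=\prod_{i\le j}\det(\Sigma_{C_i})\big/\prod_{2\le i\le j}\det(\Sigma_{S_i})$ and telescoping in $j$ yields the key relation $\det(\Sigma_{C_j})=\det(\Sigma_{S_j})\prod_{v\in R_j}D_{vv}$ for $j\ge 2$, together with $\det(\Sigma_{C_1})=\prod_{v\in R_1}D_{vv}$. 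Substituting these into the determinant ratio and using that the residuals $R_1,\ldots,R_r$ partition $V$, the clique contributions assemble into $\prod_{v\in V}D_{vv}^{\gamma_v}$ with $\gamma_v:=a_j$ for $v\in R_j$, while the surviving separator determinants collect into $\prod_{S\in\mathscr{S}_{\G}}\det(\Sigma_S)^{E(S)}$, where $E(S):=\sum_{j\in J(\mathcal{P},S)}(a_j-b_j)=\sum_{j\in J(\mathcal{P},S)}\big(\alpha_j+(c_j-s_j)/2\big)-\sum_{j\in J(\mathcal{P},S)}\beta_j$.

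The final step is the cancellation of the separator block. For every separator $S\neq S_2$, condition a) in the definition of $\mathcal{B}_{\mathcal{P}}$ is precisely the statement $E(S)=0$, so all of those determinants drop out. The only residual term is $\det(\Sigma_{S_2})^{E(S_2)}$ with $E(S_2)=\eta_2$ as in condition c); here I would use the extra ancestrality of $S_2$ secured from Lemma \ref{lem:combine}(a), so that Lemma \ref{lem:ancestral}(ii) gives $\det(\Sigma_{S_2})=\prod_{v\in S_2}D_{vv}$. Folding this into the exponents (noting $S_2\subseteq C_1=R_1$) produces a single monomial $\prod_{v\in V}D_{vv}^{\tilde\gamma_v}$ with $\tilde\gamma_v=\gamma_v+\eta_2\,\mathbf{1}[v\in S_2]$. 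This is exactly the polynomial carried by $\pi_{\P_{\D}}$, and conditions b) and c) guarantee that the resulting $\tilde\gamma_v$ lie in the integrability range $\tilde\gamma_v<pa_v/2+1$ of the DAG Wishart; hence each normalized $\W_{\P_{\G}}(\alpha,\beta,U^E,\cdot)$ with $(\alpha,\beta)\in\mathcal{B}_{\mathcal{P}}$ coincides with a normalized $\pi_{\P_{\D}}(\tilde\gamma,U,\cdot)$, which is the claimed subfamily inclusion.

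The main obstacle is the choice of $\D$ together with the bookkeeping that isolates a single leftover separator. The telescoping trivializes the clique determinants only when the histories are ancestral, which forces $\D$ to be induced by $\mathcal{P}$, and condition a) annihilates every separator block except the very first one $S_2$, whose determinant is genuinely \emph{not} a product of $D_{vv}$'s for an arbitrary orientation. The reason Lemma \ref{lem:combine}(a) is stated with the extra clause that $S_2$ can be made ancestral is precisely to remove this last obstruction; without it the Type II density would retain a $\det(\Sigma_{S_2})^{\eta_2}$ factor and fail to be directly comparable to $\pi_{\P_{\D}}$, exactly as illustrated by the non-adapted orientation in Example \ref{ex:comparison}.
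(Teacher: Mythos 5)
Your proposal is correct and follows essentially the same route as the paper's proof: choose the perfect DAG version induced by $\mathcal{P}$ with $S_2$ ancestral (Lemma \ref{lem:combine}(a)), use Lemma \ref{lem:ancestral}(ii) together with the history factorization $\det(\Sigma_{H_j})=\det(\Sigma_{C_j})\det(\Sigma_{H_{j-1}})/\det(\Sigma_{S_j})$ to turn the clique and $S_2$ determinants into monomials in the $D_{vv}$, and invoke condition a) of $B_{\mathcal{P}}$ to annihilate all other separator factors. If anything, your write-up is slightly more explicit than the paper's on two points it leaves implicit — that the ancestrality of $S_2$ from Lemma \ref{lem:combine}(a) must be invoked to handle $\det(\Sigma_{S_2})^{\eta_2}$, and that conditions b) and c) place the resulting exponents $\tilde\gamma_v$ inside the DAG Wishart integrability range $\tilde\gamma_v<pa_v/2+1$.
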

%%%%
\begin{proof}
Let $\D$ be a DAG version of $\G$  induced by $\mathcal{P}$. First note that by Lemma  \ref{lem:combine} $\D$ is perfect. Therefore $\P_{D}$ is identical to $\P_{\G}$ and $\mathrm{\pi}_{\P_{\D}}$ is indeed a distribution on $\P_{\G}$. In comparing $\W_{\P_{\G}}$ with $\pi_{\P_{\D}}$ it suffices to show that the Markov ratio that appears in the density of $\W_{\P_{\G}}$ can be written as products of $D_{jj}$  to some powers. We proceed to rewrite the corresponding  Markov ratio as follows:
\begin{align}\label{eq:markov_ratio_rewritten}
\notag&\frac{\prod_{j=1}^{r} \det\left( \Sigma_{C_{j}} \right)^{\alpha_{j}+\frac{c_{j}+1}{2}}}
{\prod_{j=2}^{r} \det\left( \Sigma_{S_{j}} \right)^{\beta_{j}+\frac{s_{j}+1}{2}}}=\det\left(\Sigma_{R_{1}|S_{2}}\right)^{\alpha_{1}+\frac{c_{1}+1}{2}}\det\left(\Sigma_{S_{2}}\right)^{\alpha_{1}+\frac{c_{1}+1}{2}}\prod_{j=2}^{r}\det\left(\Sigma_{R_{j}|S_{j}}\right)^{\alpha_{j}+\frac{c_{j}+1}{2}}\\
\notag&\times\prod_{j=2}^{r}\det\left(\Sigma_{S_{j}}\right)^{\alpha_{j}-\beta_{j}+\frac{c_{j}-s_{j}}{2}}\\
\notag &=\det\left(\Sigma_{R_{1}|S_{2}}\right)^{\alpha_{1}+\frac{c_{1}+1}{2}}\prod_{j=3}\det\left(\Sigma_{R_{j}|S_{j}}\right)^{\alpha_{j}+\frac{c_{j}+1}{2}}\det\left(\Sigma_{S_{2}}\right)^{\alpha_{1}+\frac{c_{1}+1}{2}+\sum_{j\in J\left(\mathcal{P}, S_{2}\right) }
\left(\alpha_{j}+\frac{c_{j}-s_{2}}{2} \right)-\nu\left(S_{2}\right)\beta\left(S_{2}\right)}\\
&\times\prod_{S\in \mathscr{S}_{\G}\setminus{S_{2}}}^{r}\det\left(\Sigma_{S}\right)^{\sum_{j\in
J\left(\mathcal{P}, S\right)} \left(\alpha_{j}+\frac{c_j-|S|}{2} \right)-\nu\left(S\right)\beta\left(S\right)}.
\end{align}
Let $K_{j}:=H_{j}\setminus C_{j}$ for each $j=2,\ldots, r$. Consider the following block-partitioning of $\Sigma_{H_{j}}$.
\[
\Sigma_{H_{j}}=
\left(
\begin{matrix}
\Sigma_{R_{j}}& \Sigma_{R_{j}S_{j}}&\Sigma_{R_{j}K_{j}}\\
\Sigma_{S_{j}R_{j}}&\Sigma_{S_{j}}&\Sigma_{S_{j}K_{j}}\\
\Sigma_{K_{j}R_{j}}&\Sigma_{K_{j}S_{j}}&\Sigma_{K_{j}}
\end{matrix}
\right).
\]
Now for each $j=2,\ldots,r$ we have $\Sigma_{H_{j}}\in \mathrm{PD}_{\G_{H_{j}}}$, and $S_{j}$ separates $R_{j}$  form $K_{j}$ . By Lemma 5.5 \cite{L96} we have 
\[
\det\left(\Sigma_{H_{j}}\right)=\dfrac{\det\left(\Sigma_{C_{j}}\right)\det\left(\Sigma_{H_{j-1}}\right)}{\det\left(\Sigma_{S_{j}}\right)}.
\]
 By rewriting this and using Lemma \ref{lem:ancestral} we obtain
\[
\det\left(\Sigma_{R_{j}|S_{j}}\right)=\det\left(\Sigma_{H_{j}}\right)\det\left(\Sigma_{H_{j-1}}\right)^{-1}=\prod_{\ell\in R_{j}}D_{\ell\ell}.
\]

 Similarly, Lemma \ref{lem:ancestral} implies that $\det\left(\Sigma_{S_{2}}\right)=\prod_{\ell\in S_{2}}D_{\ell\ell}$ and  $\det\left(\Sigma_{R_{1}|S_{2}}\right)=\prod_{\ell\in R_{1}}D_{\ell\ell}$. Now note that if $\left(\alpha, \beta\right)\in
B_{\mathcal{P}}$ and $S\neq S_{2}$,  then $\sum_{j\in J\left(\mathcal{P}, S\right)}\left(\alpha_{j}+\left(c_{j}-|S|\right)/2 \right)-\nu\left(s\right)\beta\left(S\right)=0$. Thus when the shape parameters  are  restricted to  $B_{\mathcal{P}}$  the Markov ratio above is only a product of some powers of $D_{jj}$.
\end{proof}
%%%%%%%%%%%
The next proposition is essential for our purposes as it gives us the recipe that we need to construct counterexamples to the LM conjecture \eqref{eq:LM2}. Note that if the LM conjecture (II) is true, then the dimension of the set $\mathcal{B}$, as a manifold, is $r+1$. First we introduce a new notation as follows.
\begin{definition}\label{def:ans}
Let $\D$ be a DAG version of $\G$ induced by $\mathcal{P}$ and let $\mathscr{S}_{\G}^{\D}$ denote the set of all separators $S\in \mathscr{S}_{\G}$ that are ancestral in $\D$.  We denote the number of elements of $\mathscr{S}_{\G}^{\D}$ by $r_{\D}$.
\end{definition}
\begin{proposition}\label{prop:B1}
 Let $\D$ be a DAG version of $\G$ induced by $\mathcal{P}$. Then the dimension of the manifold described by  $\left(\alpha,\beta\right)\in \R^{r}\times \R^{r-1}$ that satisfies Equation \eqref{eq:B1} is greater than or equal to $r+r_{\D}$.
\end{proposition}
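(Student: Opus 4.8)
The plan is to locate, inside the set of $(\alpha,\beta)$ satisfying \eqref{eq:B1}, an open piece of an affine subspace of dimension at least $r+r_{\D}$; on this subspace the Type~II density degenerates to an honest DAG Wishart, whose domain of integrability is already pinned down in \cite{BR11}. The starting point is the rewriting of the Markov ratio established inside the proof of Theorem~\ref{thm:comparison}, namely Equation~\eqref{eq:markov_ratio_rewritten}. Because $\D$ is induced by $\mathcal{P}$, every history $H_{j}$ is ancestral, so Lemma~\ref{lem:ancestral}(ii) converts each residual determinant $\det\left(\Sigma_{R_{j}\mid S_{j}}\right)$, and each separator determinant $\det\left(\Sigma_{S}\right)$ with $S\in\mathscr{S}_{\G}^{\D}$, into a product of the $D_{\ell\ell}$. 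What does \emph{not} automatically reduce is the factor $\prod_{S\notin\mathscr{S}_{\G}^{\D}}\det\left(\Sigma_{S}\right)^{e_{S}(\alpha,\beta)}$ running over the non-ancestral separators, each exponent $e_{S}$ being affine in $(\alpha,\beta)$ (with the bookkeeping that, when $S=S_{2}$, the exponent carries the additional summand $\alpha_{1}+\tfrac{c_{1}+1}{2}$ inherited from $C_{1}$).

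I would then let $\mathcal{L}\subseteq\R^{r}\times\R^{r-1}$ be the affine subspace defined by the equations $e_{S}(\alpha,\beta)=0$, one for each non-ancestral separator. There are exactly $r'-r_{\D}$ of these, and since the number $r'$ of distinct separators satisfies $r'\le r-1$ we obtain
\[
\dim\mathcal{L}\;\ge\;(2r-1)-(r'-r_{\D})\;\ge\;(2r-1)-(r-1-r_{\D})\;=\;r+r_{\D}.
\]
The crucial economy here --- and what makes the bound scale with $r_{\D}$ --- is that the ancestral separators require \emph{no} equation: their determinants already pass through the $D_{\ell\ell}$, so on $\mathcal{L}$ the whole Markov ratio is a pure product $\prod_{\ell}D_{\ell\ell}^{\gamma_{\ell}}$ with each $\gamma_{\ell}$ affine in $(\alpha,\beta)$. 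In this way $\mathcal{L}$ is genuinely larger than the $(r+1)$-dimensional sheet $\mathcal{B}_{\mathcal{P}}$ on which Theorem~\ref{thm:comparison} operates, precisely by the $r_{\D}$ ancestral directions that are left free.

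On $\mathcal{L}$ the restricted density is of DAG Wishart type $\exp\left\{-\tr\left(\Omega U\right)\right\}\prod_{\ell}D_{\ell\ell}^{\gamma_{\ell}}\,d\Omega$, so Theorem~4.1 of \cite{BR11}, in the reparametrized form producing $z_{\D}(U,\gamma)$, applies verbatim: for $(\alpha,\beta)\in\mathcal{L}$, condition \eqref{eq:B1} holds for every $U^{E}$ if and only if $\gamma_{\ell}<pa_{\ell}/2+1$ for all $\ell$. These finitely many strict linear inequalities carve out an open set $\mathcal{O}$, and $\mathcal{L}\cap\mathcal{O}$ lies entirely in the \eqref{eq:B1}-locus; being open in $\mathcal{L}$, it inherits $\dim\mathcal{L}\ge r+r_{\D}$ as soon as it is nonempty. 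Certifying $\mathcal{L}\cap\mathcal{O}\neq\emptyset$ is the one step I expect to require care, and I would settle it by a monotonicity argument: every $\gamma_{\ell}$ is a sum of a residual contribution $\alpha_{j(\ell)}+\tfrac{c_{j(\ell)}+1}{2}$ together with ancestral-separator exponents, all carrying nonnegative coefficients on the $\alpha_{j}$, so simultaneously decreasing all $\alpha_{j}$, raising the free $\beta(S)$ attached to the ancestral separators, and solving the constrained $\beta$'s so as to stay on $\mathcal{L}$, drives every $\gamma_{\ell}\to-\infty$. Hence sufficiently negative parameters satisfy all the inequalities, $\mathcal{L}\cap\mathcal{O}$ is nonempty, and the bound $\dim\ge r+r_{\D}$ follows.
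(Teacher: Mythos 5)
Your proposal takes essentially the same route as the paper's own proof: both start from the rewriting \eqref{eq:markov_ratio_rewritten}, use Lemma \ref{lem:ancestral}(ii) to turn the determinants attached to ancestral sets into products of the $D_{\ell\ell}$, observe that the vanishing-exponent equations therefore need only be imposed for non-ancestral separators, and count dimensions. You actually supply two steps the paper leaves implicit: the explicit codimension count $(2r-1)-(r'-r_{\D})\ge r+r_{\D}$, and the verification that the integrability inequalities $\gamma_{\ell}<pa_{\ell}/2+1$ from \cite{BR11} cut out a \emph{nonempty} open subset of the affine slice; your monotonicity argument for nonemptiness is sound, since all the $\gamma_{\ell}$ are driven to $-\infty$ as the $\alpha_{j}$ decrease along $\mathcal{L}$.

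There is, however, one concrete slip, located exactly where you flagged ``bookkeeping'': the treatment of $S_{2}$ when $S_{2}\notin\mathscr{S}_{\G}^{\D}$. This case is allowed by the hypothesis of the proposition, since Lemma \ref{lem:combine} only asserts that \emph{some} DAG version induced by $\mathcal{P}$ has $S_{2}$ ancestral, not all of them. Your equation for $S_{2}$ is $e_{S_{2}}=0$ with $e_{S_{2}}=\alpha_{1}+\tfrac{c_{1}+1}{2}+\eta_{2}$, the full exponent of $\det\left(\Sigma_{S_{2}}\right)$ in \eqref{eq:markov_ratio_rewritten}. Killing that factor still leaves $\det\left(\Sigma_{R_{1}|S_{2}}\right)^{\alpha_{1}+\frac{c_{1}+1}{2}}$ in the density, and $\det\left(\Sigma_{R_{1}|S_{2}}\right)=\det\left(\Sigma_{C_{1}}\right)/\det\left(\Sigma_{S_{2}}\right)$ is \emph{not} a product of the $D_{\ell\ell}$ when $S_{2}$ fails to be ancestral (Lemma \ref{lem:ancestral}(ii) applies to $C_{1}=H_{1}$ but not to $S_{2}$). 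So on your $\mathcal{L}$ a stray factor $\det\left(\Sigma_{S_{2}}\right)^{-\left(\alpha_{1}+\frac{c_{1}+1}{2}\right)}$ survives, the restricted density is not of DAG Wishart form, and Theorem 4.1 of \cite{BR11} no longer applies ``verbatim'' as claimed; the same issue invalidates your blanket statement that every residual determinant reduces (it is true for $j\ge 2$, where $\det\left(\Sigma_{R_{j}|S_{j}}\right)$ is a ratio of ancestral-history determinants, but not for $j=1$). The fix is cheap: impose instead $\eta_{2}=0$, i.e.\ the exponent \emph{without} the additional summand. Then the $R_{1}$- and $S_{2}$-factors recombine into $\det\left(\Sigma_{C_{1}}\right)^{\alpha_{1}+\frac{c_{1}+1}{2}}=\prod_{\ell\in C_{1}}D_{\ell\ell}^{\alpha_{1}+\frac{c_{1}+1}{2}}$, the density on the slice genuinely is a DAG Wishart, and since this is still exactly one affine equation per non-ancestral separator the bound $r+r_{\D}$ is unchanged. (The paper sidesteps this trap because its proof drops equations from the description of $B_{\mathcal{P}}$, in which the $S_{2}$-exponent never appears as an equality constraint, and because in its counterexamples $S_{2}$ is ancestral.)
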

%%%%%%%%%%%%
 \begin{proof}
 Suppose $S\in \mathscr{S}_{\G}^{\D}$. By part ii) of Lemma \ref{lem:ancestral} the term
  \[
 \det\left(\Sigma_{S}\right)^{\sum_{j\in
 J\left(\mathcal{P}, S\right), \: S\neq S_{2}} \left(\alpha_{j}+\frac{c_j-|S|}{2} \right)-\nu\left(S\right)\beta\left(S\right)},
 \]
 in Equation \eqref{eq:markov_ratio_rewritten} can be written as products of some powers of $D_{jj}$. This in turn implies that for each $S\in \mathscr{S}_{\G}^{\D}$, restricting $\left(\alpha, \beta\right)$ to the equation
 \[
 \sum_{j\in
 J\left(\mathcal{P}, S\right), S\neq S_{2}} \left(\alpha_{j}+\frac{c_j-|S|}{2} \right)-\nu\left(S\right)\beta\left(S\right)=0
 \]
 is not necessary and, consequently, the dimension of the corresponding set, as a manifold, is at least $r+r_{\D}$.
 \end{proof}
   %%%
   \begin{Rem}\label{rem:key}
 As we mentioned earlier, the LM conjecture (II) implies that for any decomposable graph $\G$ the dimension of $\mathcal{B}$ is $r+1$. Now in light of Proposition \ref{prop:B1} the LM conjecture (II) suggests that for any DAG version $\D$ of $\G$ the number $r_{\D}\le 1$. Therefore, the LM conjecture (II) can be shown not to be true if we can construct a decomposable graph $\G$ and a DAG version $\D$ such that $r_{\D}>1$. We shall further exploit this line of reasoning in \S \ref{sub:LM_partb}. 
\end{Rem}
	%%
 %=======================================================================
\section{Comparing Homogeneous Type II Wisharts with Perfect Transitive DAG Wisharts}\label{sub:typeII_dag_wish_hom} Henceforth in this section, let $\mathcal{H}=\left(V, H\right)$ denote a homogeneous graph. In this section we show that for any homogeneous graph $\G$ there is a DAG version $\D$ such that $\W_{\P_{\G}}$ is a special case of $\pi_{\P_{\D}}$ on the whole parameter set $\mathcal{B}$ (note that, as we  discussed in \S \ref{subsec:H}, when a graph is homogeneous the parameter set $\mathcal{B}$ is completely identified). Note also that Equation \eqref{eq:homogeneous} defining homogeneous graphs naturally defines a partial order on the vertex set $V$ of $\mathcal{H}$ as follows:
\[
\forall i,j\in V,\:\; i\succeq_{\mathcal{H}} j \Longleftrightarrow \mathrm{ne}\left(j\right)\cup\left\{ j\right\}\subseteq \mathrm{ne}\left(i\right)\cup\left\{ i\right\}.
\]
See \cite{L07} for more details on rooted Hasse trees and on this partial order $\succeq_{\mathcal{H}}$. We let the linear order $\geq_{\mathcal{H}}$ (or simply $\geq$ when there is no danger of confusion) be a linear extension of the partial order $\succeq_{\mathcal{H}}$, and let $\D$ be the DAG version of $\mathcal{H}$ induced by  $\geq_{\mathcal{H}}$. One can easily check that $\D$ is perfect, and transitive, i.e., 
\[
i\rightarrow j  \; \&\;  j\rightarrow k \implies i\rightarrow k.
\]
The above shows that any homogeneous graph has a perfect transitive DAG version.  Now for a homogeneous graph we prove the following generalized form of Theorem \ref{thm:comparison}.
%%%
\begin{proposition}\label{prop:comparison_hom}
Let $\mathcal{H}$ be a homogeneous graph and $\D$ a perfect transitive  DAG version. Then the family of Type II Wisharts for $\mathcal{H}$ is a subfamily of the DAG Wisharts for $\D$.
\end{proposition}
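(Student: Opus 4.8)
The plan is to reduce the statement, exactly as in Theorem \ref{thm:comparison}, to showing that the Markov ratio appearing in the density of $\W_{\P_{\mathcal{H}}}$ can be written as a product of powers of the $D_{\ell\ell}$ --- but this time \emph{without} imposing any linear constraint on $(\alpha,\beta)$, so that the conclusion holds on all of $\mathcal{B}$ rather than only on a single $B_{\mathcal{P}}$. Choose $\D$ to be the perfect transitive DAG version of $\mathcal{H}$ constructed above; by Lemma \ref{lem:combine}(b) there is a perfect order $\mathcal{P}=(C_1,\ldots,C_r)$ of the cliques of $\mathcal{H}$ inducing $\D$, so the rewriting \eqref{eq:markov_ratio_rewritten} is available. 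In that expression the factors $\det(\Sigma_{R_1|S_2})$ and $\det(\Sigma_{R_j|S_j})$ already collapse to $\prod_{\ell\in R_j}D_{\ell\ell}$ via Lemma \ref{lem:ancestral}(ii), since the histories are ancestral in $\D$, and $\det(\Sigma_{S_2})$ collapses likewise once $S_2$ is ancestral. The only genuinely problematic factors are $\prod_{S\neq S_2}\det(\Sigma_S)^{(\cdots)}$, whose exponents had to be forced to zero (the defining equations of $B_{\mathcal{P}}$) in Theorem \ref{thm:comparison}. The whole proof therefore hinges on a single structural claim: \emph{every separator $S\in\mathscr{S}_{\mathcal{H}}$ is ancestral in $\D$}, i.e.\ $r_{\D}=|\mathscr{S}_{\mathcal{H}}|$ in the language of Definition \ref{def:ans}. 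Granting this, Lemma \ref{lem:ancestral}(ii) turns \emph{every} $\det(\Sigma_S)$ into $\prod_{\ell\in S}D_{\ell\ell}$, and the entire Markov ratio becomes a product of powers of the $D_{\ell\ell}$ for arbitrary $(\alpha,\beta)$.

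To prove the structural claim I first record what transitivity buys us. Because $\D$ is transitive and its order is a linear extension of $\succeq_{\mathcal{H}}$, a directed path into $v$ collapses to a single edge, so $\mathrm{an}(v)=\mathrm{pa}(v)=\{w : w\sim v,\ w\succ_{\mathcal{H}} v\}$ is exactly the set of higher-ordered neighbours of $v$. Consequently a set $A\subseteq V$ is ancestral in $\D$ precisely when it is closed under passing to higher-ordered neighbours: $v\in A$ and $w\sim v$ with $w\succeq_{\mathcal{H}} v$ imply $w\in A$.

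The heart of the argument --- and the step I expect to be the main obstacle --- is then to show that each separator $S$ enjoys this upward-closure property. Here I would invoke the standard fact that a minimal separator $S$ of a decomposable graph admits two distinct full connected components $A,B$ of $\mathcal{H}\setminus S$ such that every $v\in S$ has a neighbour $a'\in A$ and a neighbour $b'\in B$. Now take $v\in S$ and $w\sim v$ with $w\succeq_{\mathcal{H}} v$. Homogeneity \eqref{eq:homogeneous} gives $\mathrm{ne}(v)\cup\{v\}\subseteq \mathrm{ne}(w)\cup\{w\}$, so $w$ is adjacent to (or equal to) both $a'$ and $b'$. If $w$ lay in $A$ then the edge $w\sim b'$ would join $A$ to $B$, contradicting that $S$ separates them; symmetrically $w\notin B$. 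Hence $w\in S$, which is exactly the upward-closure needed, and $S$ is ancestral.

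With the claim in hand the conclusion is immediate. Every factor of \eqref{eq:markov_ratio_rewritten} is now a product of powers of the $D_{\ell\ell}$, so the unnormalized Type II density factors as $\exp\{-\tr(\Omega U)\}\prod_{j}D_{jj}^{\gamma_j}\,d\Omega$, with each $\gamma_j$ read off as a fixed affine combination of $(\alpha,\beta)$; this is precisely the reparametrized DAG Wishart density $\pi_{\P_{\D}}(\gamma,U,d\Omega)$. Since no constraint on $(\alpha,\beta)$ was used, every pair for which \eqref{eq:B1} holds yields an integrable $\pi_{\P_{\D}}$ (equivalently $\gamma_j<pa_j/2+1$ for all $j$), and the closed form of $z_{\D}$ supplies matching normalizing constants, so condition \eqref{eq:B2} is automatic. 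Therefore the full family $\W_{\P_{\mathcal{H}}}$, on all of $\mathcal{B}$, is realised inside $\pi_{\P_{\D}}$, which is the assertion of the proposition. I would close by noting, via Proposition \ref{prop:B1}, that this is the sharp opposite of the non-homogeneous situation: here $r_{\D}=|\mathscr{S}_{\mathcal{H}}|$ is as large as possible, whereas the counterexamples to the LM conjecture arise precisely when $r_{\D}>1$.
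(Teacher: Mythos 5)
Your proof is correct, but it reaches the crucial structural fact --- ancestrality of the separators in $\D$ --- by a genuinely different route than the paper. The paper never invokes minimal vertex separators: it first shows that every \emph{clique} $C$ of $\mathcal{H}$ is ancestral (if $u\to v$ with $v\in C$, then for any other $w\in C$, homogeneity --- equivalently, transitivity when $v\to w$ and perfectness when $w\to v$ --- forces $u\sim w$, so $C\cup\{u\}$ is complete and maximality of the clique gives $u\in C$), and then ancestrality of each separator comes for free, since $S=C\cap C'$ is an intersection of ancestral sets. With all cliques \emph{and} separators ancestral, Lemma \ref{lem:ancestral}(ii) collapses the raw Markov ratio $\prod_{C}\det\left(\Sigma_C\right)^{\cdot}/\prod_{S}\det\left(\Sigma_S\right)^{\cdot}$ directly into a product of powers of the $D_{\ell\ell}$, with no need for Lemma \ref{lem:combine}(b) or the rewriting \eqref{eq:markov_ratio_rewritten}. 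You instead attack the separators head-on, through the full-component characterization of minimal separators combined with homogeneity \eqref{eq:homogeneous}. Both arguments are valid; the paper's is shorter and self-contained (it avoids the external facts that the $S_j$ of a perfect order are minimal separators and that such separators have two full components), whereas yours has a conceptual payoff the paper's lacks: it exhibits $r_{\D}=|\mathscr{S}_{\mathcal{H}}|$ explicitly, tying the homogeneous case to Definition \ref{def:ans} and Proposition \ref{prop:B1} and making transparent why the counterexample mechanism of \S\ref{sub:LM_partb} cannot arise for homogeneous graphs.

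Two small points to tighten. First, after excluding $w\in A$ and $w\in B$ you conclude ``hence $w\in S$,'' which tacitly assumes $w\in A\cup B\cup S$; since $\mathcal{H}\setminus S$ may have other components, argue instead that if $w\notin S$, then adjacency (or equality) of $w$ to $a'$ already places $w$ in the component $A$, and then $w\sim b'$ contradicts separation. Second, ancestrality in $\D$ is equivalent to closure under parents, i.e.\ under neighbours higher in the \emph{linear} order; closure under $\succeq_{\mathcal{H}}$-larger neighbours is slightly stronger (they differ on equivalence classes of $\succeq_{\mathcal{H}}$), so your ``precisely when'' should be an implication --- harmless here, since you prove the stronger property.
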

%%%
\begin{proof}

First we claim that all the cliques, and consequently all the separators, of $\G$ are ancestral in $\D$. To see this note the following: Let $C$ be a clique of $\G$ and suppose $u\rightarrow v$ for some $v\in C$. Let $w$ be any other vertex in $C$. Then either $v\rightarrow w$ or $v\leftarrow w$. Regardless, since $\D$ is homogeneous, $u$ and $w$  must be adjacent. Thus $u\in C$. This proves that $C$ is ancestral in $\D$. Now if $S$ is a separator of $\G$, then the fact that $S=C\cap C' $ for some $C, C'\in \mathscr{C}_{\G}$ implies that $S$  is ancestral (otherwise, it implies that  the clique $C^{'}$ is not ancestral leading to a contradiction). Therefore by Lemma \ref{lem:ancestral} we obtain
 \[
 \dfrac{\prod_{C\in \mathscr{C}}\det\left( \Sigma_{C} \right)^{\alpha\left(C\right)+\frac{|C|+1}{2}}}
{\prod_{S\in \mathscr{S}}\det\left( \Sigma_{S} \right)^{\beta\left(S\right)+\frac{|S|+1}{2}}}=\frac{\prod_{C\in \mathscr{C}}\prod_{\ell\in C}D_{\ell\ell}^{\alpha\left(C\right)+\frac{|C|+1}{2}}}{\prod_{S\in \mathscr{S}}\prod_{\ell\in S}D_{\ell\ell}^{\beta\left(S\right)+\frac{|S|+1}{2}}}\;,
 \]
 which obviously shows that this Markov ratio is a product of powers of $D_{jj}$, therefore the family of Type II Wisharts for $\mathcal{H}$ is a subfamily of the DAG Wisharts for $\D$.
 \end{proof}
%%%

\noindent
 In the following two examples we compare $W_{\P_{\mathcal {G}}}$ and $\pi_{\P_{\D}}$ in more detail. More specifically, we shall explain how the space of shape parameters is identified for each family of distributions. The space of shape parameters for $\W_{\P_{\mathcal{H}}}$ is identified by Theorem 3.2 in \cite{L07}. For this purpose, we follow the notion introduced in \cite[section 3.3]{L07}. \\

For any homogeneous graph $\mathcal{H}$, let $\mathcal{T}_{\mathcal{H}}=\left(T, E_{\mathcal{H}}, \preceq\right)$ be the Hasse tree of $\mathcal{H}$ (see \cite[section 2.2]{L07} for greater detail). Note that a vertex in $T$ is indeed an equivalent class $[i]\subseteq V$ for some vertex $i\in V$, where $j\in [i]$ if and only if $\mathrm{ne}\left(j\right)\cup\left\{j\right\}= \mathrm{ne}\left(i\right)\cup\left\{i\right\}$.  If $t\in T$ is an internal vertex, i.e., a vertex that has a child, then $C_{t}:=\bigcup\left\{[i]:\:  [i]\preceq t \right\}$ is a clique of $\mathcal{H}$. Also if $q \in T$ is a leaf vertex, i.e., has no child, then  $S_{q}=\bigcup\left\{[i]:\:  [i]\preceq q\right\}$ is a separator of $\mathcal{H}$. For each $[i]\in T$  define
\begin{align*}
\rho_{[i]}\left(\alpha,\beta\right)&:=\sum_{[i]\preceq t}\alpha\left(C_{t}\right)-\sum_{[i]\preceq q}\nu\left(S_{q}\right)\beta\left(S_{q}\right),\\
m_{[i]}&:=\sum_{t\preceq [i]}n_{t},
\end{align*}
where $n_{t}$ is the number of the elements in $t$. By Theorem 3.2 in \cite{L07}, 
\[
\left(\alpha, \beta\right)\in \mathcal{B}\Longleftrightarrow  -\rho_{[i]}\left(\alpha,\beta\right)>\left(\sum_{[i]\preceq t}n_{t}-1\right)/2,\: \ \forall [i]\in T.
\]
%%%
\begin{figure}[ht]
\centering
\subfigure[]{
\includegraphics[width=3.2cm]{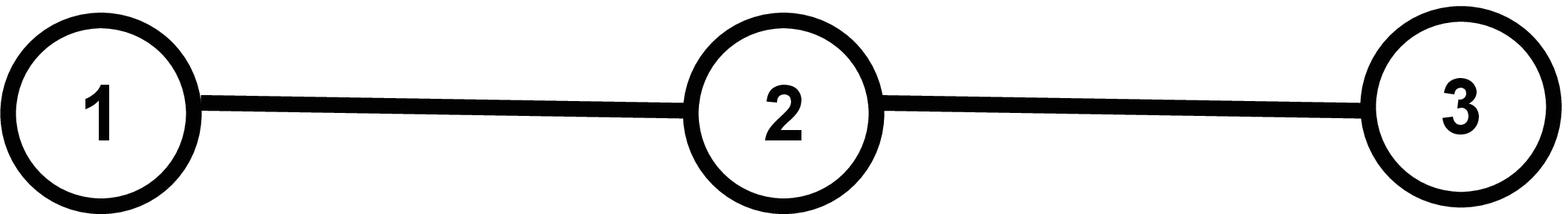}
\label{fig:path-3a}
}
\hspace{2cm}
\subfigure[]{
\includegraphics[width=3.1cm]{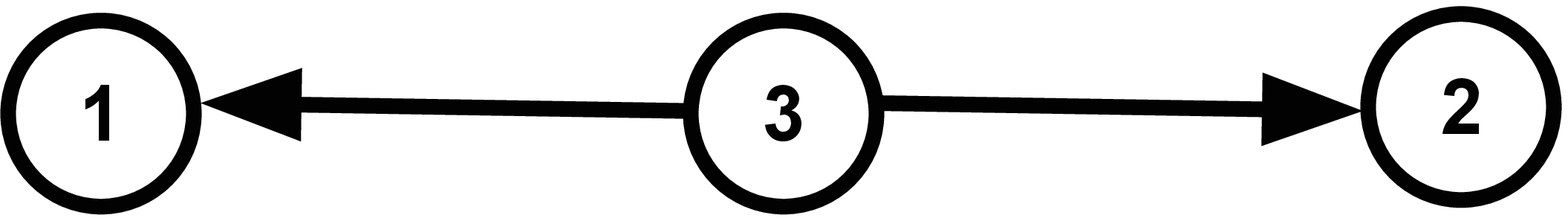}
\label{fig:path-3b}
}
\hspace{2cm}
\subfigure[]{
\includegraphics[width=3.3cm]{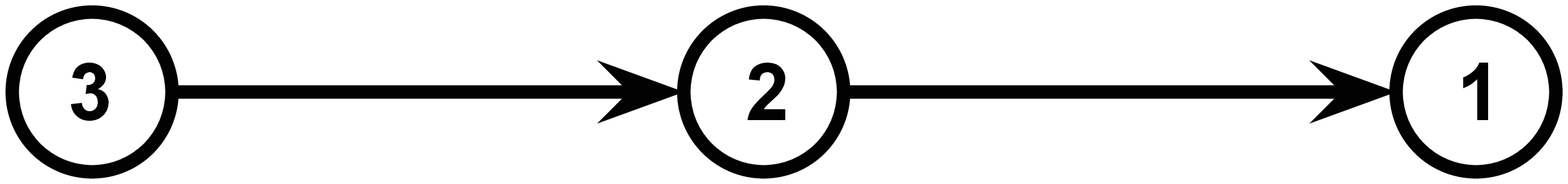}
\label{fig:path-3c}
}
\caption{Denoted graphs are $\left(a\right)$ A homogeneous graph  $A_{3}$ , $\left(b\right)$~  A transitive perfect DAG version of  $A_3$ and $\left(c\right)$~ A perfect DAG version of $A_3$.} 
\end{figure}
\noindent We now proceed to compare the space of shape parameters of $\W_{\P_{\G}}$ and $\pi_{\P_{\D}}$ in two concrete examples.
%%%
\begin{Ex}\label{Ex}
Let $\mathcal{H}$ be the $3$-path given in Figure \ref{fig:path-3a} and $\D$ the DAG version given in Figure
\ref{fig:path-3b}. It is clear that $\mathcal{H}$ is a homogeneous graph and $\D$ is a perfect transitive DAG version. First we show that the densities $W_{\P_{\mathcal {H}}}$ and $\pi_{\P_{\D}}$ have the same functional form. Using the labeling in $\D$, the cliques of $\mathcal{H}$ are $C_{1}=\preceq
1\succeq$, ~$C_{2}=\preceq 2\succeq$. The only separator is $S_{2}=\prec 1\succ$.  Thus  $c_{1}=2,\; c_{2}=2, \; s_{2}=1$. Replacing these
in the corresponding Markov ratio that appears in the $\W_{\P_{\G}}$ distribution we obtain
\begin{equation*}
\frac{\det\left( \Sigma_{\preceq 1\succeq} \right)^{\alpha_{1}+\frac{3}{2}}\det\left(\Sigma_{\preceq 2\succeq}\right)^{\alpha_{2}+\frac{3}{2}}}
{\det\left( \Sigma_{\prec 1\succ}
\right)^{\beta_{2}+1}}=\dfrac{\left(D_{11}D_{33}\right)^{\alpha_{1}+\frac{3}{2}}\left(D_{22}D_{33}\right)^{\alpha_{2}+\frac{3}{2}}}{D_{33}^{\beta_2+1}}=D_{11}^{\alpha_{1}+\frac{3}{2}}D_{22}^{\alpha_{2}+\frac{3}{2}}D_{33}^{\alpha_{1}+\alpha_{2}
-\beta_{2}+2}.
\end{equation*}
%%%
Therefore, $W_{\P_{\mathcal {H}}}\left(\alpha,\beta, U^{H}\right)$ and $\pi_{\P_{\D}}\left(\gamma, U\right)$ have exactly the same functional form when $\gamma_{1}=\alpha_{1}+3/2$, $\gamma_{2}=\alpha_{2}+3/2$ and
$\gamma_{3}=\alpha_{1}+\alpha_{2} -\beta_{2}+2$. To identify the space of shape parameters for $\W_{\P_{\mathcal{H}}}$ we proceed to compute  
\[
\begin{array}{lll}
\rho_{[1]}=\alpha_{1},& \rho_{[2]}=\alpha_{2},&  \rho_{[3]}=\alpha_{1}+\alpha_{2}-\beta_{2},\\
n_{[1]}=1, &n_{[2]}=1,& n_{[3]}=1.
\end{array}
\]
Now by using Theorem 3.2 in \cite{L07} we obtain  $\mathcal{B}=\left\{\left(\alpha_1,\alpha_2,\beta_3\right): \alpha_1<0,\: \alpha_2<0,\:\beta_2-\alpha_2-\alpha_1<3/2\right\}$. Th space of shape parameters
$\left(\gamma_1, \gamma_2,\gamma_3\right)$  for $\pi_{\P_{\D}}$ is easily determined by inequalities $\gamma_{j}<pa_{j}/2+1$, which yields the same inequalities $\alpha_1<0,\: \alpha_2<0$ and $\beta_2-\alpha_2-\alpha_1<3/2$. This shows that up to re-parametrization $W_{\P_{\mathcal {H}}}\left(\alpha,\beta, U^{H}\right)$ and $\pi_{\P_{\D}}\left(\gamma, U\right)$ are the same distributions. In the next example we shall show that the family of DAG Wisharts $\pi_{\P_{\D}}\left(\gamma, U\right)$ strictly contains the  family of Type II Wisharts $W_{\P_{\mathcal {H}}}\left(\alpha,\beta, U^{H}\right)$.
\end{Ex}
%%%
\begin{figure}[ht]
\centering
\subfigure[]{
\includegraphics[width=2.8cm]{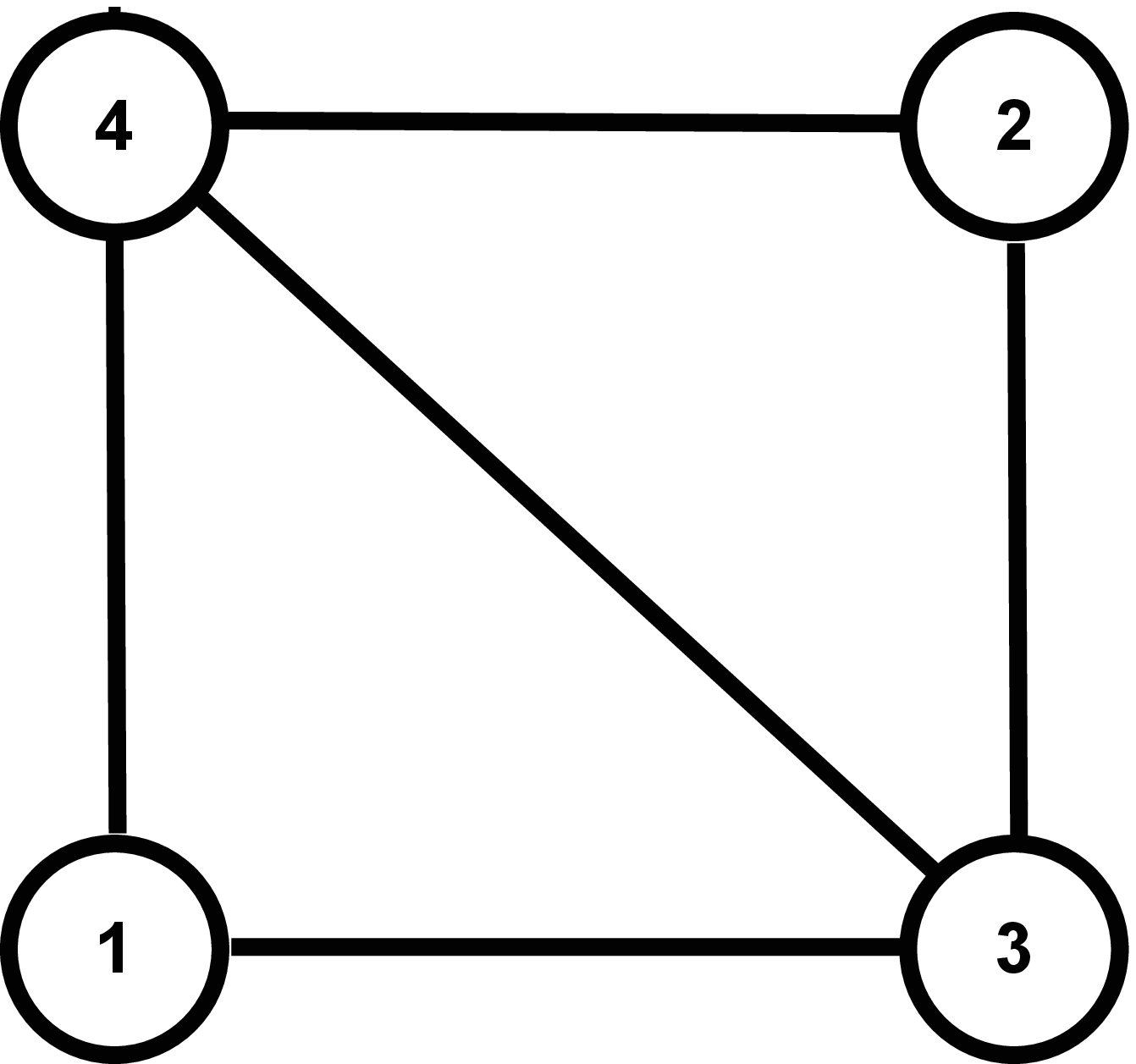}
\label{fig:subfiga}
}
\hspace{2cm}
\subfigure[]{
\includegraphics[width=3.6cm]{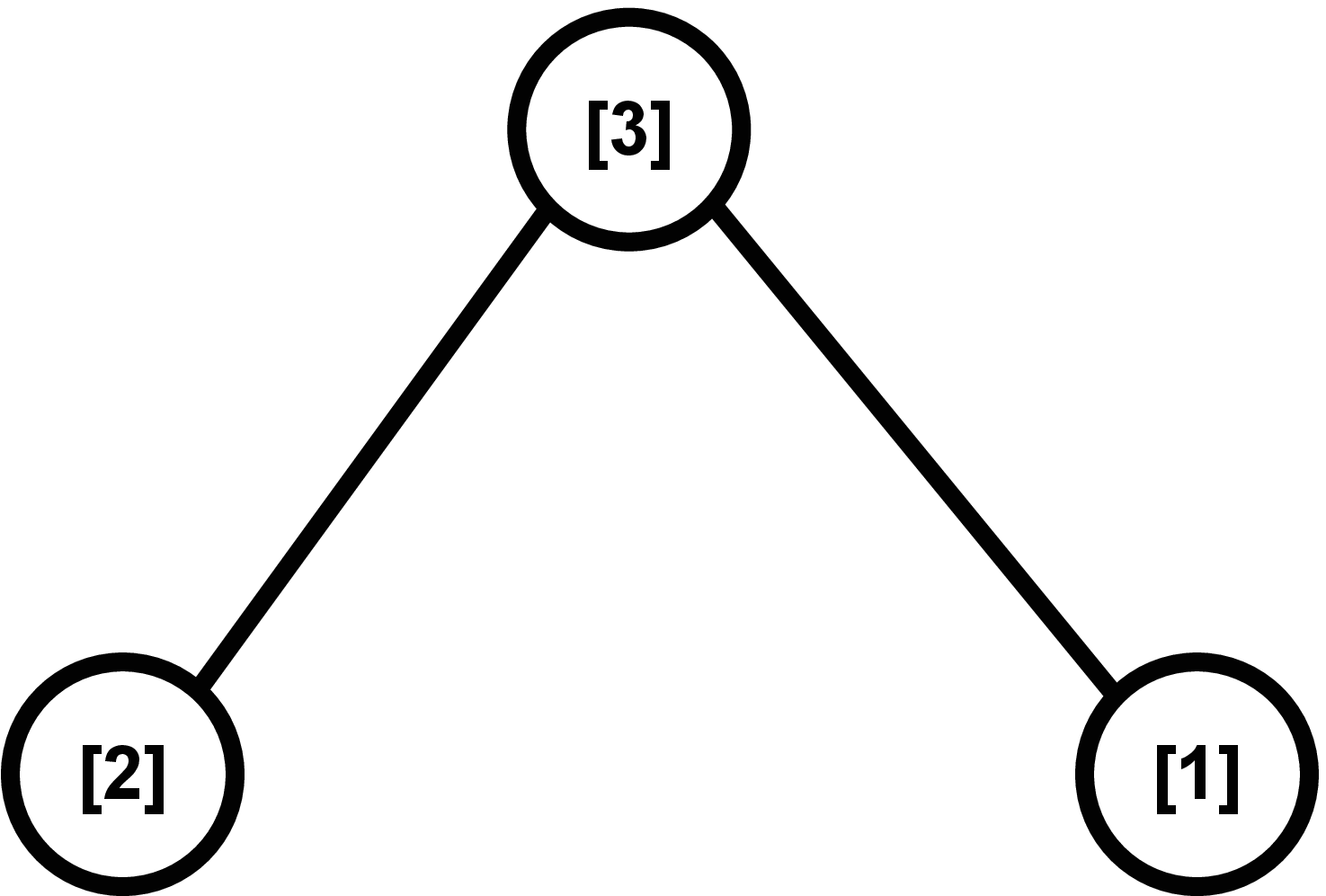}
\label{fig:subfigb}
}
\hspace{2cm}
\subfigure[]{
\includegraphics[width=2.8cm]{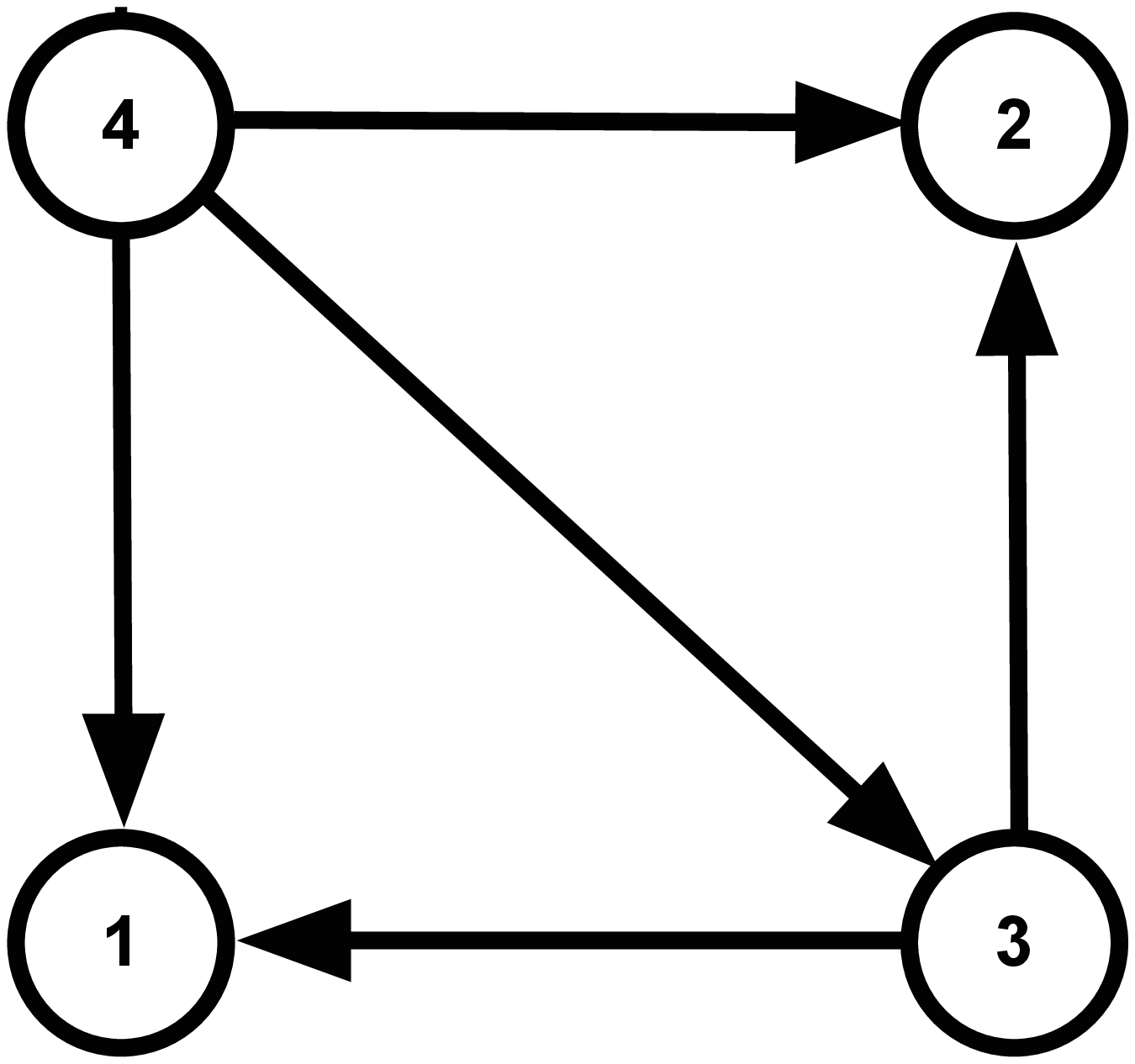}
\label{fig:subfigc}
}
\caption{A homogeneous graph, its Hasse diagram, and a DAG version. }
\label{fig:label}
\end{figure}
% % % % % % % % % % %
\begin{Ex} Let $\mathcal{H}$ be the homogeneous graph given in Figure \ref{fig:subfiga}. The
Hasse tree of $\mathcal{H}$ is given in Figure \ref{fig:subfigb}. Let $\D$ be the perfect transitive DAG version given in Figure \ref{fig:subfigc}. The cliques of  $\mathcal{H}$  are $C_{1}=\, \preceq 1\succeq$ and $C_{2}=\, \preceq 2\succeq$. The only separator is
$S_{2}=\,\prec 1\succ$. Thus  $c_{1}=3, \; c_{2}=3$ and $s_{2}=2$. Substituting these in the corresponding Markov ratio which appears in the $\W_{\P_{\G}}$ distribution we obtain
  \[
\dfrac{\det\left(\Sigma_{\preceq 1\succeq}\right)^{\alpha_{1}+2}\det\left(\Sigma_{\preceq 2\succeq}\right)^{\alpha_{2}+2}}{\det\left(\Sigma_{\prec
1\succ}\right)^{\beta_{2}+\frac{3}{2}}}=
D_{11}^{\alpha_1+2}D_{22}^{\alpha_2+2}D_{33}^{\alpha_1+\alpha_2-\beta_2+\frac{5}{2}}D_{44}^{\alpha_1+\alpha_2-\beta_2+\frac{5}{2}}.
  \]
Therefore, the choice of $\gamma_1=\alpha_1+2$, $\gamma_2=\alpha_2+2$ and $\gamma_3=\gamma_4=\alpha_1+\alpha_2-\beta_2+5/2$ the density $\pi_{\P_{\D}}\left(\gamma, U\right)$
is equal to $W_{\P_{\mathcal {G}}}\left(\alpha, \beta, U\right)$. In order to apply Theorem 3.2 in \cite{L07} for identifying the space of shape parameters $\mathcal{B}$, first we compute
 \[
 \begin{array}{lll}
 \rho_{[1]}=\alpha_{1},&  \rho_{[2]}=\alpha_{2},&  \rho_{[3]}=\alpha_{1}+\alpha_{2}-\beta_{2}\\ 
 n_{[1]}=1,& n_{[2]}=1, & n_{[3]}=2.
 \end{array}
 \]
From this and after some calculations we obtain  $\left(\alpha_1,\alpha_2,\beta_2\right)\in \mathcal{B}$ if and only if
\[
\alpha_1<0,\: \alpha_2<0 \: \text{and}\: \alpha_1+\alpha_2-\beta_2<-3/2.
\]
 Alternatively, with much less computation, from the inequalities $\gamma_{j}<pa_{j}/2+1$ we can identify $\mathcal{B}$.
 \end{Ex}
 %%%
 \noindent

The examples above demonstrate that for a homogeneous graph $\mathcal{H}$, the Type II Wishart is a special case of the DAG Wishart for a perfect transitive DAG version of $\mathcal{H}$  . Furthermore, identifying the space of shape parameter $\mathcal{B}$ under the DAG Wishart family is computationally less expensive.
\begin{Rem}
Proposition \ref{prop:comparison_hom} for homogeneous graphs is stronger than Theorem \ref{thm:comparison} for decomposable graphs. The reason is that the latter guarantees the family of $\mathcal{W}_{\rm{P}_{\G}}$ distributions is a subfamily of DAG Wisharts $\pi_{\rm{P}_{\D}}$ only when $(\alpha, \beta)$ are restricted to $\mathcal{B}_{\mathcal{P}}\subset \mathcal{B}$. Proposition \ref{prop:comparison_hom} however guarantees that the family of $\mathcal{W}_{\rm{P}_{\G}}$ is a subfamily of DAG Wisharts $\pi_{\rm{P}_{\D}}$ on the whole parameter set $\mathcal{B}$. Also note that Proposition \ref{prop:comparison_hom} is not implied by Theorem \ref{thm:comparison}. To see this, consider the DAG $\D$ given by Figure \ref{fig:path-3c}. This DAG is a perfect, but non-transitive, DAG version of the homogeneous graph $A_3$ given by Figure \ref{fig:path-3a} and in fact induced by the perfect order $\mathcal{P}=(C_1:=\{2,3 \}, C_2:=\{1,2 \})$ of the cliques of $A_3$. It is easy to check that $B_{\mathcal{P}}\subsetneq \mathcal{B}$. Therefore using Theorem \ref{thm:comparison} for this homogeneous graph does not imply that the family of $\mathcal{W}_{\rm{P}_{\G}}$ is a subfamily of DAG Wisharts $\pi_{\rm{P}_{\D}}$ on the whole parameter set $\mathcal{B}$.
\end{Rem}
%%
%========================================================================================
%==============================================================
 \section{Counterexamples to part \textbf{II} of the LM conjecture}\label{sub:LM_partb}
 %%%
We now return to the LM conjecture (II) in this section. Using the tools we have developed thus far, we proceed to obtain some counterexamples to show that Part (II) of the LM conjecture fails. In particular, we show that there exist decomposable graphs where the space of shape parameters $\mathcal{B}$ for the Type II Wisharts, over such a graph $\G$, strictly contains $\bigcup_{\mathcal{P}\in\mathrm{Ord}(\G) }B_{\mathcal{P}}$. Recall that a pair $\left(\alpha, \beta\right)\in\R^{r}\times\R^{r-1}$ belongs to $\mathcal{B}$  if and only if it satisfies both Equation \eqref{eq:B1} and Equation \eqref{eq:B2}. As we hinted in Remark \ref{rem:key}, if there exists a decomposable graph $\G$, necessarily non-homogeneous, that has a perfect order $\mathcal{P}$ such that the DAG version $\D$ induced by $\mathcal{P}$ yields $r_{\D}\ge 2$ (recall Definition \ref{def:ans}), then Equation \eqref{eq:B1} is satisfied on a set  that strictly contains $\bigcup_{\mathcal{P}\in\mathrm{Ord}(\G) }B_{\mathcal{P}}$. In the following examples we show that in such a situation, simultaneously for the same set, Equation \eqref{eq:B2} can be satisfied as well.
%%% 
\begin{figure}[ht]
\centering
\subfigure[]{
\includegraphics[width=3.5cm]{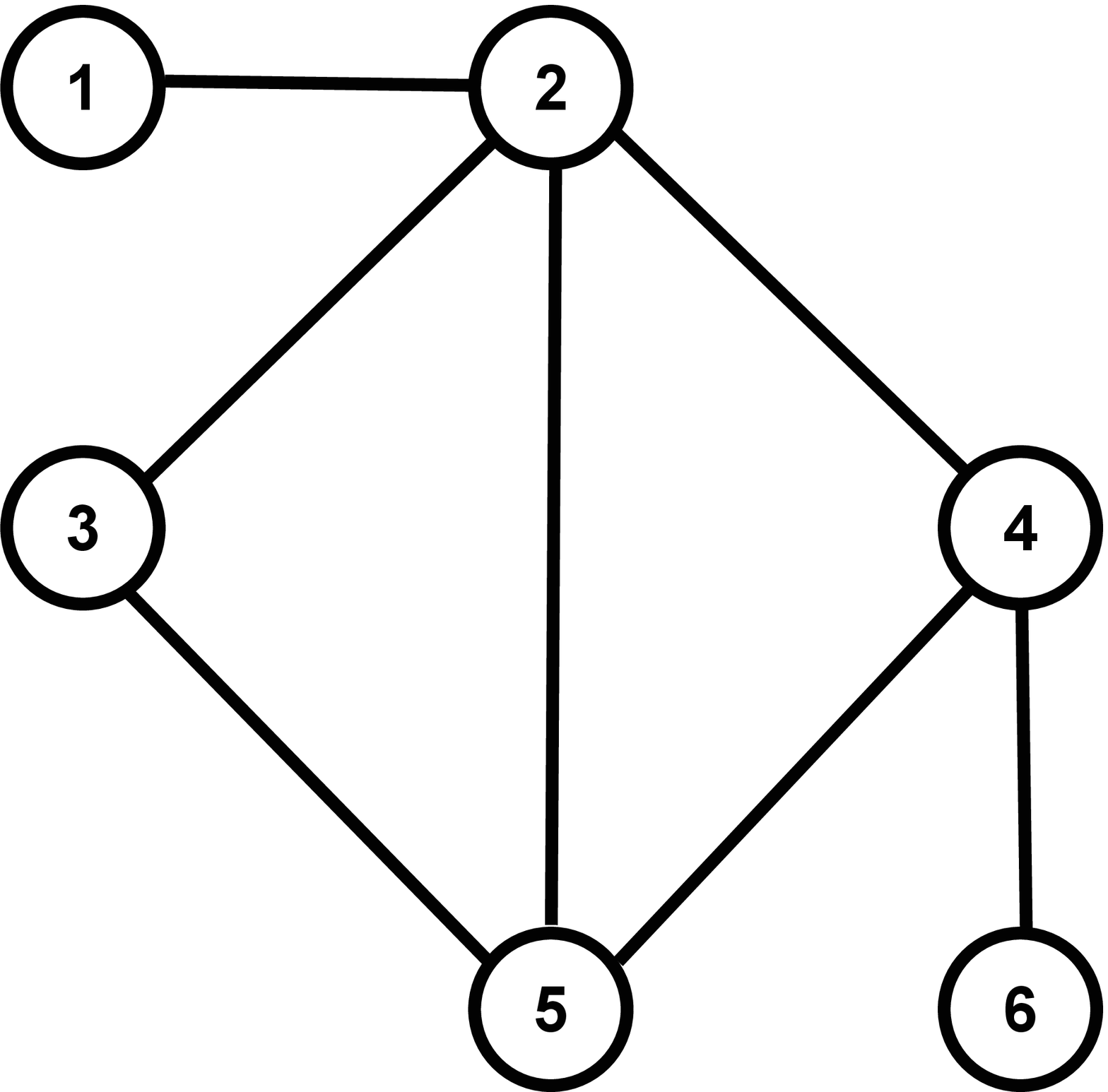}
\label{fig:c1}
}
\hspace{2cm}
\subfigure[]{
\includegraphics[width=3.51cm]{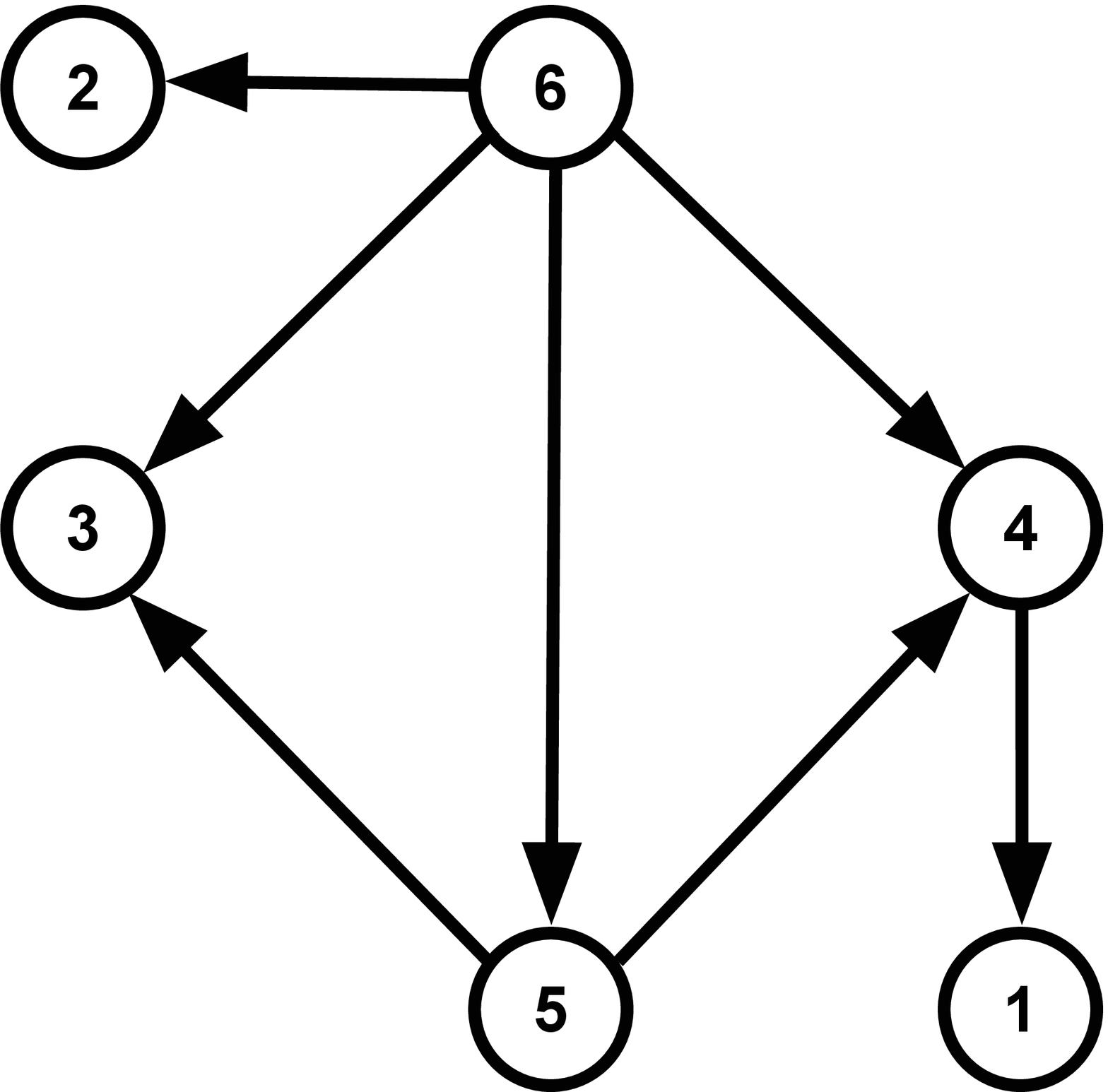}
\label{fig:cd1}
}
\caption{ First counterexample to the LM conjecture (II). }
\end{figure}
%%%%
 \begin{Ex} \label{ex:LM2_a}
 
Let $\G$ be the decomposable graph given in Figure \ref{fig:c1}. Clearly, $\G$ is a non-homogeneous decomposable graph. Consider the perfect order
$\mathcal{P}=\left(C_{1}=\left\{2, 3, 5\right\}, C_{2}=\left\{2, 4, 5\right\}, C_{3}=\left\{4, 6 \right\}, C_{4}=\left\{1, 2 \right\}\right)$ of the cliques of $\G$. The separators of $\G$  are  $S_{2}=\left\{ 2, 5\right\}, S_{3}=\left\{ 4\right\}$ and $S_{4}=\left\{ 2\right\}$. It is easy to check that the DAG $\D$ given in Figure \ref{fig:cd1} is a directed version of $\G$  induced by $\mathcal{P}$. Using the labeling in $\D$ we have 
\[
\begin{array}{llll}
C_{1}=\preceq 3\succeq\;, &C_{2}=\preceq  4\succeq\; , &C_{3}=\preceq  2\succeq\; ,& C_{4}=\preceq  1\succeq\: \text{and}\\
& S_{2}=\prec  3\succ\; , &S_{3}=\prec  2\succ\;, &S_{4}=\prec  1\succ .  
 \end{array}
 \]
Note that for the DAG version $\D$ given by Figure \ref{ex:LM2_a} $r_{\D}=2$ since  $S_{3}$ and $S_{2}$ are both ancestral in $\D$. Thus by Proposition \ref{prop:B1} Equation \eqref{eq:B1} is satisfied on a set of $\left(\alpha, \beta \right)$ that is of dimension greater than or equal to $r+r_{\D}=6$. This is strictly greater than $5$, the dimension of $\bigcup_{\mathcal{P}\in\mathrm{Ord}(\G) }B_{\mathcal{P}}$. Hence it is clear that the LM conjecture (II) fails for this example. Nonetheless, for this specific example, we provide the reader with a self-contained proof. To begin with, we rewrite the Markov ratio term that appears in $\W_{\P_{\G}}$ as follows:
 \begin{align}\label{eq:markov_ratio_example1}
\notag&\frac{\prod_{j=1}^{4} \det\left( \Sigma_{C_{j}} \right)^{\alpha_{j}+\frac{c_{j}+1}{2}}}
{\prod_{j=2}^4 \det\left( \Sigma_{S_{j}} \right)^{\beta_{j}+\frac{s_{j}+1}{2}}}=\dfrac{\det\left(\Sigma_{\preceq 3\succeq}\right)^{\alpha_{1}+2}\det\left(\Sigma_{\preceq
4\succeq}\right)^{\alpha_{2}+2}\det\left(\Sigma_{\preceq 2\succeq}\right)^{\alpha_{3}+\frac{3}{2}}\det\left(\Sigma_{\preceq
1\succeq}\right)^{\alpha_{4}+\frac{3}{2}}}{\det\left(\Sigma_{\prec 3\succ}\right)^{\beta_{2}+\frac{3}{2}}\det\left(\Sigma_{\prec
2\succ}\right)^{\beta_{3}+1}\det\left(\Sigma_{\prec 1\succ}\right)^{\beta_{4}+1}}\\
\notag&=\dfrac{D_{33}^{\alpha_{1}+2}D_{55}^{\alpha_{1}+2}D_{66}^{\alpha_{1}+2}D_{44}^{\alpha_{2}+2}D_{55}^{\alpha_{2}+2}D_{66}^{\alpha_{2}+2}D_{22}^{\alpha_{3}+\frac{3}{2}}D_{66}^{\alpha_{3}+\frac{3}{2}}D_{11}^{\alpha_{4}+\frac{3}{2}}\Sigma_{44}^{\alpha_{4}+\frac{3}{2}}}{D_{55}^{\beta_{2}+\frac{3}{2}}D_{66}^{\beta_{2}+\frac{3}{2}}D_{66}^{\beta_{3}+1}\Sigma_{44}^{\beta_{4}+1}}\\
&=D_{11}^{\alpha_{4}+\frac{3}{2}}D_{22}^{\alpha_{3}+\frac{3}{2}}D_{33}^{\alpha_{1}+2}D_{44}^{\alpha_{2}+2}D_{55}^{\alpha_{1}+\alpha_{2}-\beta_{2}+\frac{5}{2}}D_{66}^{\alpha_{1}+\alpha_{2}+\alpha_{3}-\beta_{2}-\beta_{3}+3}\Sigma_{44}^{\alpha_{4}-\beta_{4}+\frac{1}{2}}.
\end{align}
Let $\gamma_{j}$ be the exponent of $D_{jj}$ in Equation \eqref{eq:markov_ratio_example1}. Then $\int_{\P_{\mathcal
{G}}}\omega_{\P_{\G}}\left(\alpha,\beta, U^{E}, d\Omega\right)<\infty$ for every $\left(\alpha, \beta\right)\in \R^{4}\times
\R^{3}$ such that
\begin{equation}\label{eq:example1_convergence}
\forall j\:\: \alpha_{j}<0 ,\: \alpha_{1}+\alpha_{2}-\beta_{2}+1
<0,\: \alpha_{1}+\alpha_{2}+\alpha_{3}-\beta_{2}-\beta_{3}+2<0,\:\alpha_{4}-\beta_{4}+\frac{1}{2}=0.
 \end{equation}
Now we show that for each $\left(\alpha,\beta\right)$ that satisfies Equation \eqref{eq:example1_convergence}, not only Equation \eqref{eq:B1} is satisfied, but also Equation \eqref{eq:B2} is satisfied, i.e., 
\[
\int_{\P_{\mathcal {G}}}\omega_{\P_{\G}}\left(\alpha,\beta, U^{E},
d\Omega\right)/H_{\G}\left(\alpha,\beta,U^{E}\right) \: \text{is functionally independent of $U^{E}$}.
\]
 By Equation \eqref{eq:markov_ratio_example1} and Equation \eqref{eq:normalizing_constant_rewritten} we have
 \begin{align*}
\notag&\int_{\P_{\mathcal {G}}}\omega_{\P_{\G}}\left(\alpha,\beta, U^{E}, d\Omega\right)\propto \prod_{j=1}^{6}\left(U_{jj|\prec j\succ}\right)^{\gamma_{j} -\frac{pa_{j}}{2} -1}\det\left( U_{\prec j\succ}\right)^{-\frac{1}{2}}\\
\notag &=\left( U_{11|\prec 1\succ}\right)^{\alpha_{4}}\left( U_{22|\prec 2\succ}\right)^{\alpha_{3}}\left( U_{33|\prec 3\succ}\right)^{\alpha_{1}}\left( U_{44|\prec
4\succ}\right)^{\alpha_{2}}\det\left( U_{55|\prec 5\succ}\right)^{\alpha_{1}+\alpha_{2}-\beta_{2}+1}\left( U_{66|\prec
6\succ}\right)^{\alpha_{1}+\alpha_{2}+\alpha_{3}-\beta_{2}-\beta_{3}+2}\\
\notag&\times\det\left(U_{\prec 1\succ}\right)^{-\frac{1}{2}}\det\left(U_{\prec 2\succ}\right)^{-\frac{1}{2}}\det\left(U_{\prec 3 \succ}\right)^{-\frac{1}{2}}\det\left(U_{\prec
4\succ}\right)^{-\frac{1}{2}}\det\left(U_{\prec 5\succ}\right)^{-\frac{1}{2}}\\
\notag&=\left( U_{11|\prec 1\succ}\right)^{\alpha_{4}}\left( U_{22|\prec 2\succ}\right)^{\alpha_{3}}\left( U_{33|\prec 3\succ}\right)^{\alpha_{1}}\left( U_{44|\prec
4\succ}\right)^{\alpha_{2}}\left( U_{55|\prec 5\succ}\right)^{\alpha_{1}+\alpha_{2}-\beta_{2}+1}\left( U_{66|\prec
6\succ}\right)^{\alpha_{1}+\alpha_{2}+\alpha_{3}-\beta_{2}-\beta_{4}+2}\\
\notag&\times U_{44}^{-\frac{1}{2}}\left(U_{66|\prec 6\succ}\right)^{-\frac{1}{2}}\left(U_{55|\prec 5 \succ}\right)^{-\frac{1}{2}}\left(U_{66|\prec
6\succ}\right)^{-\frac{1}{2}}\left(U_{55|\prec 5\succ}\right)^{-\frac{1}{2}}\left(U_{66|\prec 6\succ}\right)^{-\frac{1}{2}}\left(U_{66|\prec 6\succ}\right)^{-\frac{1}{2}}\\
&=\left( U_{11|\prec 1\succ}\right)^{\alpha_{4}}\left( U_{22|\prec 2\succ}\right)^{\alpha_{3}}\left( U_{33|\prec 3\succ}\right)^{\alpha_{1}}\left( U_{44|\prec
4\succ}\right)^{\alpha_{2}}\left( U_{55|\prec 5\succ}\right)^{\alpha_{1}+\alpha_{2}-\beta_{2}}\left( U_{66|\prec
6\succ}\right)^{\alpha_{1}+\alpha_{2}+\alpha_{3}-\beta_{2}-\beta_{3}}U_{44}^{-\frac{1}{2}}.
 \end{align*}
 Now similar to our computation in Equation \eqref{eq:markov_ratio_example1} we can show 
 \begin{align*}
H_{\G}\left(\alpha,\beta, U^{E}\right)&=\left(U_{11|\prec 1\succ}\right)^{\alpha_{4}}\left(U_{22|\prec 2\succ}\right)^{\alpha_{3}}\left(U_{33|\prec
3\succ}\right)^{\alpha_{1}}\left(U_{44|\prec 4\succ}\right)^{\alpha_{2}}\\
&\times \left(U_{55|\prec 5\succ}\right)^{\alpha_{1}+\alpha_{2}-\beta_{2}}\left(U_{66|\prec
6\succ}\right)^{\alpha_{1}+\alpha_{2}+\alpha_{3}-\beta_{2}-\beta_{3}}U_{44}^{\alpha_{4}-\beta_{4}}.
 \end{align*}
  Therefore, we have
  \[
\int_{\P_{\mathcal {G}}}\omega_{\P_{\G}}\left(\alpha,\beta, U^{E},
d\Omega\right)/H_{\G}\left(\alpha,\beta,U^{E}\right)\propto
  U_{44}^{-\left(\alpha_{4}-\beta_{4}+\frac{1}{2}\right)}=1, \: \text{by Equation \eqref{eq:example1_convergence}}.
  \]
  This completes our first counterexample.
\end{Ex}
%%%
\begin{Ex}\label{ex:LM2_b}
Let $\G$ be the non-homogeneous graph given in \ref{fig:c2} and let $\D$ be the DAG given in \ref{fig:cd2}. One can readily check that $\D$ is a DAG version of $\G$ induced by 
\[
\mathcal{P}=\left(\preceq 4\succeq,\; \preceq 3\succeq,\; \preceq 6\succeq, \;\preceq 2\succeq, \;\preceq 1\succeq \right).
\]
%%%
Since there are $5$ cliques and $4$ separators $r=5$ and $\mathcal{B}\subset \R^{9}$. Therefore, the LM conjecture (II) here implies that the dimension of $\mathcal{B}$ is $6$. Now $r_{\D}=2$ as the separators $S_{2}=\;\prec 3\succ $ and
$S_{3}=\;\prec 7\succ$ are ancestral ( two other separators $S_4=\;\prec 5\succ$ and $S_5=\;\prec 1\succ $ are not ancestral). Therefore, by Proposition \ref{prop:B1}, Equation \eqref{eq:B1} is satisfied on a set  of dimension greater than or equal to $r+r_{\D}=7$. We leave it to the reader to show that with a similar calculation as in Example \ref{ex:LM2_a} Equation \eqref{eq:B2} is satisfied on this set. Thus the LM conjecture (II) fails.

%%%
\end{Ex}
%%%
\begin{figure}[ht]
\centering
\subfigure[]{
\includegraphics[width=4.5cm]{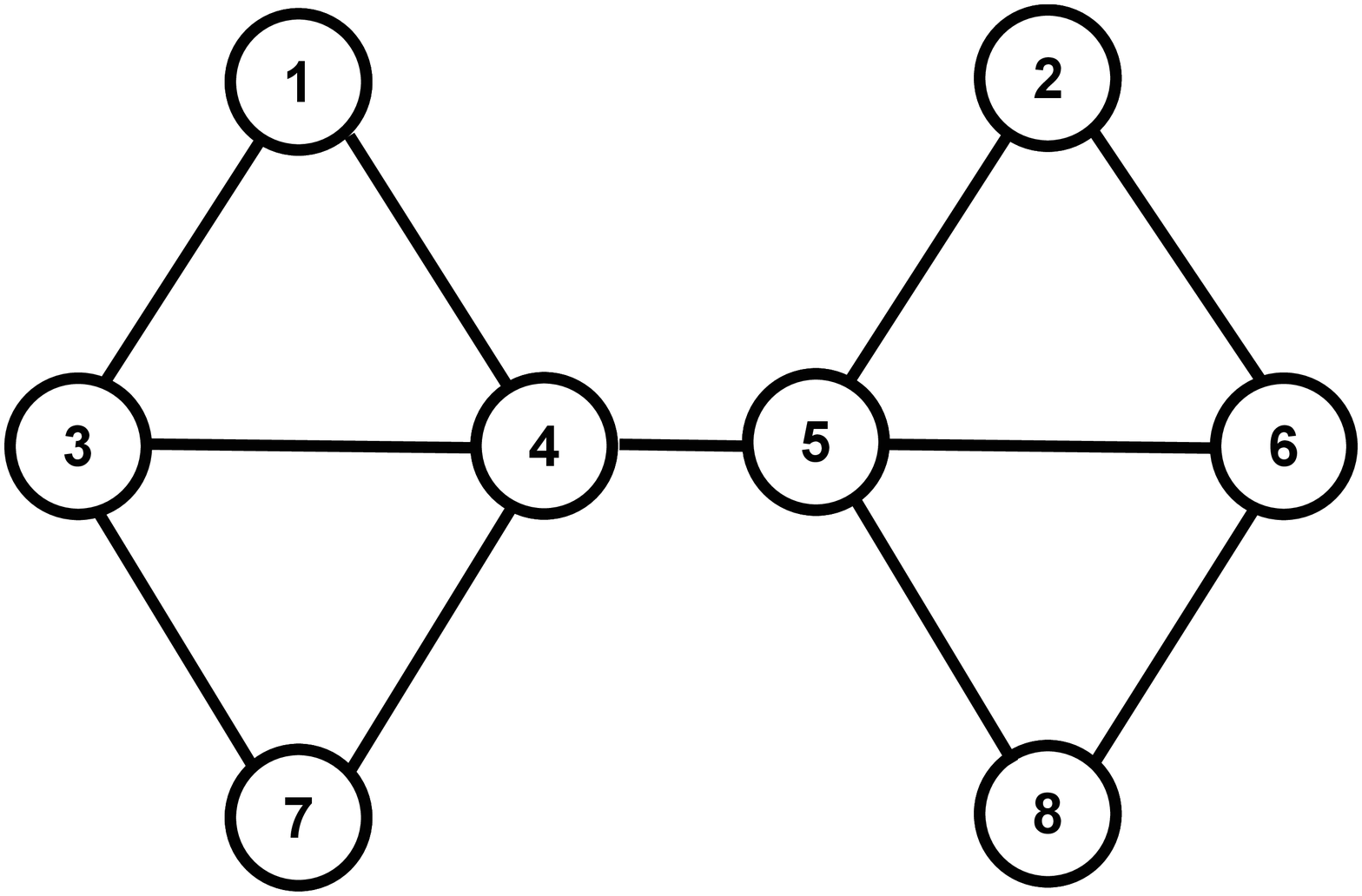}
\label{fig:c2}
}
\hspace{2cm}
\subfigure[]{
\includegraphics[width=4.5cm]{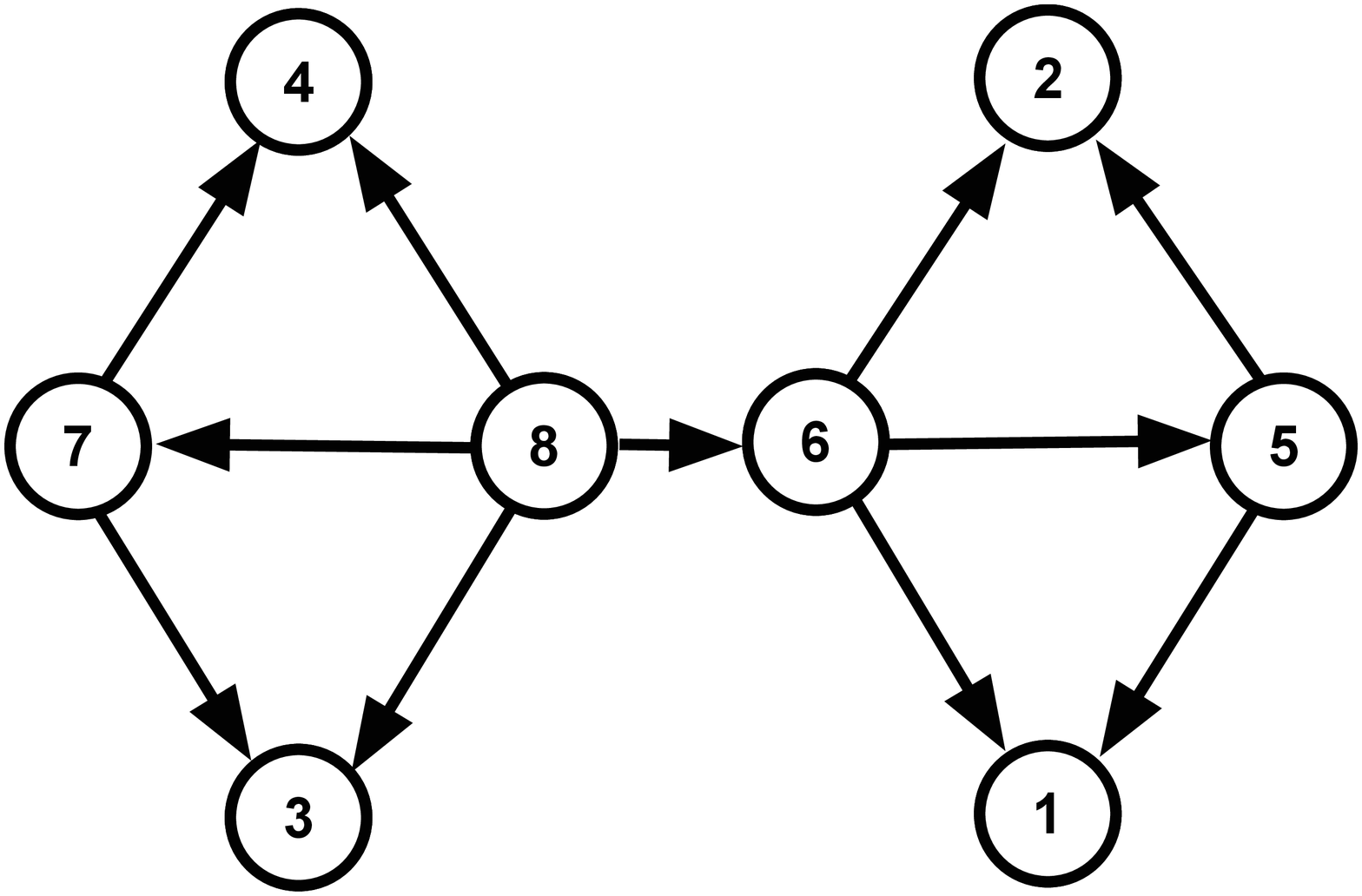}
\label{fig:cd2}
}
\caption{Second counterexample to the LM conjecture. }
\end{figure}
%%%
\begin{Rem}
Note that in Example \ref{ex:LM2_b} the separator $S_5=\:\preceq 5\succeq$. Therefore $\det\left(\Sigma_{S_5}\right)=\Sigma_{66}D_{55}$. By some calculations similar to those of Example \ref{ex:LM2_a}, we can show that the Markov ratio present in the corresponding Type II Wishart is
\begin{equation}\label{eq:rem}
\frac{\prod_{j=1}^{s} \det\left( \Sigma_{C_{j}} \right)^{\alpha_{j}+\frac{c_{j}+1}{2}}}
{\prod_{j=2}^4 \det\left( \Sigma_{S_{j}} \right)^{\beta_{j}+\frac{s_{j}+1}{2}}}\propto \prod_{j=1}^{8} D_{jj}^{\gamma_j}\Sigma_{66}^{\eta},
\end{equation}
where the exponents $\gamma_i$ and $\eta$ are some linear functions of $\alpha$ and $\beta$.  Now using Equation \eqref{eq:rem} one can show that both Equation \eqref{eq:B1} and Equation \eqref{eq:B2} are satisfied for every $(\alpha,\beta)\in \R^{9}$ subject to the constraint $\eta=\eta(\alpha,\beta)=0$. This shows that the dimension of $\mathcal{B}\ge 8$. On the other hand, we can see that for any DAG version of $\G$ always two of the separators will be be non-ancestral. Therefore, although Proposition \ref{prop:B1} identifies a subset of $\mathcal{B}$ that is strictly larger than the subset
identified by the LM conjecture (II) it does not identify the whole set $\mathcal{B}$, since for any DAG version of $\G$ Proposition \ref{prop:B1} will identify a subset of dimension $\le 7$.
\end{Rem}
% ====================================================
\section{Counterexamples to part \textbf{I} of the LM conjecture}\label{sub:LM_parta}
%%%
In this section we once more use the theory we had developed in previous sections to produce counterexample to Part (I) of the LM conjecture. First we establish the following lemma.
%%%
\begin{lemma}\label{lem:mr}
Suppose that $\D$ is a perfect DAG version of $\G$. Then 
%%%
\begin{equation}\label{eq:mr}
\dfrac{\prod_{C\in\mathscr{C}_{\G}}\det\left(\Sigma_{C}\right)^{-\frac{|C|+1}{2}}}{\prod_{S\in\mathscr{S}_{\G}}\det\left(\Sigma_{S}\right)^{-\frac{|S|+1}{2}}}=\prod_{j\in V}\left(\Sigma_{jj|\prec
j\succ}\right)^{-\frac{pa_{j}+2}{2}}\det\left(\Sigma_{\prec j\succ}\right)^{-\frac{1}{2}}.
\end{equation}
%%%
\end{lemma}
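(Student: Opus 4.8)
The plan is to reduce both sides of \eqref{eq:mr} to a common product indexed by complete subsets of $V$, and then to reduce the lemma to a purely combinatorial multiset identity between the families and parent-sets of $\D$ on one hand, and the cliques and separators of $\G$ on the other.

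First I would rewrite the right-hand side. Since $\D$ is perfect, for each $j$ the set $\preceq j\succeq=\mathrm{pa}(j)\cup\{j\}$ is complete, so the Schur-complement block-determinant identity gives
\[
\det(\Sigma_{\preceq j\succeq})=\Sigma_{jj|\prec j\succ}\,\det(\Sigma_{\prec j\succ}),
\qquad\text{i.e.}\qquad
\Sigma_{jj|\prec j\succ}=\frac{\det(\Sigma_{\preceq j\succeq})}{\det(\Sigma_{\prec j\succ})}.
\]
Substituting this into the $j$-th factor of the right-hand side and collecting exponents, using $|\preceq j\succeq|=pa_j+1$ and $|\prec j\succ|=pa_j$, yields
\[
\bigl(\Sigma_{jj|\prec j\succ}\bigr)^{-\frac{pa_j+2}{2}}\det(\Sigma_{\prec j\succ})^{-\frac12}
=\det(\Sigma_{\preceq j\succeq})^{-\frac{|\preceq j\succeq|+1}{2}}\,\det(\Sigma_{\prec j\succ})^{\frac{|\prec j\succ|+1}{2}}.
\]
Writing $\phi(A):=\det(\Sigma_A)^{-(|A|+1)/2}$ for any complete $A\subseteq V$, with the convention $\phi(\emptyset)=1$, the right-hand side of \eqref{eq:mr} becomes $\prod_{j\in V}\phi(\preceq j\succeq)/\phi(\prec j\succ)$, while its left-hand side equals $\prod_{C\in\mathscr{C}_{\G}}\phi(C)$ divided by $\prod_{S}\phi(S)$, the separator product being taken with multiplicity (exactly as in the definition of $H_{\G}$). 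Thus \eqref{eq:mr} is equivalent to
\[
\prod_{j\in V}\phi(\preceq j\succeq)\cdot\prod_{S}\phi(S)=\prod_{C\in\mathscr{C}_{\G}}\phi(C)\cdot\prod_{j\in V}\phi(\prec j\succ).
\]

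Because $\phi$ depends only on the underlying subset, it suffices to prove the combinatorial statement that, as multisets of nonempty subsets of $V$,
\[
\{\preceq j\succeq:j\in V\}\ \uplus\ \{\text{separators of }\G,\text{ with multiplicity}\}=\mathscr{C}_{\G}\ \uplus\ \{\prec j\succ:j\in V\},
\]
where the empty parent-sets of the source vertices are discarded, as they contribute $\phi(\emptyset)=1$. I would prove this by induction on $|V|$, deleting a sink $v$ of $\D$; the induced DAG $\D_{V\setminus\{v\}}$ is again a perfect DAG version of the decomposable graph $\G_{V\setminus\{v\}}$, and since $v$ has no children, its $\G$-neighbours are exactly $\prec v\succ$, so $\preceq v\succeq$ is a clique of $\G$. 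The step then splits into the two cases of the proof of Lemma \ref{lem:combine}(b). If $\prec v\succ$ is already a clique of $\G_{V\setminus\{v\}}$, then passing to $\G$ replaces that clique by $\preceq v\succeq$ and leaves all separators unchanged; since adding $v$ contributes $\preceq v\succeq$ to the family multiset and $\prec v\succ$ to the parent multiset, both sides gain exactly $\{\preceq v\succeq\}$. Otherwise $\preceq v\succeq$ is a new clique of $\G$ whose intersection with the remainder of the graph is $\prec v\succ$, so $\prec v\succ$ becomes a new separator with multiplicity raised by one; here both sides gain $\{\preceq v\succeq,\prec v\succ\}$. In either case the increment is balanced, and the base case $|V|=1$ is immediate.

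The main obstacle is the bookkeeping in the inductive step: one must check that deleting a sink alters the clique/separator multisets in precisely the two ways above and that separator multiplicities are tracked correctly. This is exactly where perfectness of $\D$ (making $\preceq v\succeq$ complete and $v$ simplicial) and the case analysis of Lemma \ref{lem:combine}(b) enter; the determinant manipulations of the first step are routine Schur-complement algebra.
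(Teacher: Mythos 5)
Your proof is correct, and its inductive skeleton is in fact the same as the paper's: the paper also argues by induction on $|V|$, deleting a childless vertex in the last residual $R_{r}$ of a perfect order inducing $\D$ (via Lemma \ref{lem:combine}), and its two cases $R_{r}=\{1\}$ and $|R_{r}|>1$ correspond exactly to your two cases in which $\prec v\succ$ is, respectively, not a clique or a clique of $\G_{V\setminus\{v\}}$. What you do differently is the packaging. The paper carries the Schur-complement determinant manipulations inside each branch of the induction (plus a separate base case for complete $\G$), whereas you perform the linear algebra once, up front, rewriting each factor $\left(\Sigma_{jj|\prec j\succ}\right)^{-\frac{pa_{j}+2}{2}}\det\left(\Sigma_{\prec j\succ}\right)^{-\frac{1}{2}}$ as $\phi\left(\preceq j\succeq\right)/\phi\left(\prec j\succ\right)$ with $\phi\left(A\right)=\det\left(\Sigma_{A}\right)^{-(|A|+1)/2}$, and then reduce the lemma to a purely combinatorial multiset identity (families $\uplus$ separators $=$ cliques $\uplus$ parent sets). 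This buys two things: the determinant content is isolated in one routine identity, so the induction itself concerns only sets and works verbatim for any multiplicative functional $\phi$ on complete subsets, not just the exponent $-(|A|+1)/2$; and the complete-graph case needs no special treatment, since there $\prec v\succ$ is automatically the unique clique of the reduced graph. Two small bookkeeping points would make your write-up airtight: first, empty separators (which occur exactly when $\G$ is disconnected) should be discarded alongside empty parent sets, or one should simply note that $\phi\left(\emptyset\right)=1$ renders them immaterial; second, your inductive step compares separator multisets of $\G$ and $\G_{V\setminus\{v\}}$ computed from particular perfect orders, so you are implicitly invoking the standard fact that the multiset of separators with multiplicities does not depend on the choice of perfect order --- worth citing \cite{L96}, a fact the paper also uses implicitly.
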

%%%
\begin{proof}
%%%

We shall proceed by the method of mathematical induction. Suppose this is true for any decomposable graph with number of vertices less than $p$ and we prove the lemma for $|V|=p$. The
equality trivially holds when $p=1$. Let us therefore assume that $p>1$. As before, let $r$ be the number of the cliques and consider the following cases.
\begin{itemize}
\item[1)] Suppose that $r=1$, i.e., $\G$ is complete. We write
%%%
\begin{align*}
\dfrac{\prod_{C\in\mathscr{C}_{\G}}\det\left(\Sigma_{C}\right)^{-\frac{|C|+1}{2}}}{\prod_{S\in\mathscr{S}_{\G}}\det\left(\Sigma_{S}\right)^{-\frac{|S|+1}{2}}}&=\det\left(\Sigma\right)^{-\frac{p+1}{2}}\\
&=\left(\Sigma_{11|\prec 1\succ}\det\left(\Sigma_{\prec 1\succ}\right)\right)^{-\frac{p+1}{2}}\\
&=\left(\Sigma_{11|\prec 1\succ}\right)^{-\frac{pa_{1}+2}{2}}\det\left(\Sigma_{\prec 1\succ}\right)^{-\frac{1}{2}}\det\left(\Sigma_{\prec 1\succ}\right)^{-\frac{p}{2}}\\
&=\left(\Sigma_{11|\prec 1\succ}\right)^{-\frac{pa_{1}+2}{2}}\det\left(\Sigma_{\prec 1\succ}\right)^{-\frac{1}{2}}\prod_{2}^{p}\left(\Sigma_{jj|\prec
j\succ}\right)^{-\frac{pa_{j}+2}{2}}\det\left(\Sigma_{\prec j\succ}\right)^{-\frac{1}{2}},
\end{align*}
where the last equality uses the induction hypothesis for the induced graph $G_{\prec 1\succ}$.
\medskip
 \item[2)] Suppose that $r\geq 2$. Note that by Lemma \ref{lem:combine} we can assume that $\D$ is induced by some perfect order $\mathcal{P}=\left(C_{1},\ldots, C_{r}\right)$. In particular, there exists a vertex in $R_{r}$ that has no child. Thus, without loss of generality, assume that the vertices in  $\D$ are labeled such that $1\in R_{r}$. We now consider two cases:\\
~a)~ If the residual $R_{r}=\left\{1\right\}$, then $\left(C_{1},\ldots,C_{r-1}\right)$ and $\D_{V\setminus\left\{1\right\}}$ are, respectively, a perfect order of the cliques and a perfect DAG version of $\G_{V\setminus\left\{1\right\}}$. Moreover,  $S_{r}=\prec 1\succ$. Now it follows that  
\begin{align*}
&\dfrac{\prod_{j=1}^{r}\det\left(\Sigma_{C_{j}}\right)^{-\frac{c_{j}+1}{2}}}{\prod_{j=2}^{r}\det\left(\Sigma_{S_{j}}\right)^{-\frac{s_{j}+1}{2}}}=\dfrac{\prod_{j=1}^{r-1}\det\left(\Sigma_{C_{j}}\right)^{-\frac{c_{j}+1}{2}}}{\prod_{j=2}^{r-1}\det\left(\Sigma_{S_{j}}\right)^{-\frac{s_{j}+1}{2}}}\det\left(\Sigma_{R_{r}|S_{r}}\right)^{\frac{c_{r}+1}{2}}\det\left(\Sigma_{S_{r}}\right)^{\frac{s_{r}-c_{r}}{2}}\\
&=\prod_{j=2}^{p}\left(\Sigma_{jj|\prec j\succ}\right)^{-\frac{pa_{j}+2}{2}}\det\left(\Sigma_{\prec j\succ}\right)^{-\frac{1}{2}}\left(\Sigma_{11|\prec
1\succ}\right)^{-\frac{pa_{1}+2}{2}}\det\left(\Sigma_{\prec 1\succ}\right)^{-\frac{1}{2}}\\
&=\prod_{j=1}^{p}\left(\Sigma_{jj|\prec j\succ}\right)^{-\frac{pa_{j}+2}{2}}\det\left(\Sigma_{\prec j\succ}\right)^{-\frac{1}{2}}.
\end{align*}
%%%

~b)~ If the residual $R_{r}$ has more than one element, then $\left(C_{1},\ldots,C_{r-1},C_{r}\setminus\left\{1\right\}\right)$ is a perfect order of the cliques of $\G_{V\setminus\left\{1\right\}}$ with associated separators $S_{2},\ldots,S_{r}$. Using the induction hypothesis we obtain
%%%
\begin{align*}
&\prod_{j=1}^{p}\left(\Sigma_{jj|\prec j\succ}\right)^{-\frac{pa_{j}+2}{2}}\det\left(\Sigma_{\prec j\succ}\right)^{-\frac{1}{2}}=\left(\Sigma_{11|\prec
1\succ}\right)^{-\frac{pa_{1}+2}{2}}\det\left(\Sigma_{\prec j\succ}\right)^{-\frac{1}{2}}\prod_{j=2}^{p}\left(\Sigma_{jj|\prec
j\succ}\right)^{-\frac{pa_{j}+2}{2}}\det\left(\Sigma_{\prec j\succ}\right)^{-\frac{1}{2}}\\
&=\Sigma_{11|C_{r}\setminus\left\{ 1\right\}}^{-\frac{c_{r}+1}{2}}\det\left(\Sigma_{C_{r}\setminus \left\{1\right\}}\right)^{-\frac{1}{2}}\det\dfrac{\prod_{j=1}^{r-1} \det\left(\Sigma_{C_{j}}\right)^{-\frac{c_{j}+1}{2} }
}{\prod_{j=2}^{r}\det\left(\Sigma_{S_{j}}\right)^{-\frac{s_{j}+1}{2}}}\det\left( \Sigma_{C_{r}\setminus\left\{ 1\right\}}\right)^{-\frac{c_{r}}{2}}\\
&=\dfrac{\prod_{j=1}^{r-1}\det\left(\Sigma_{C_{j}}\right)^{-\frac{c_{j}+1}{2}}}{\prod_{j=2}^{r}\det\left(\Sigma_{S_{j}}\right)^{-\frac{s_{j}+1}{2}}}\det\left(\Sigma_{C_{r}}\right)^{-\frac{c_{r}+1}{2}}\\
&=\dfrac{\prod_{j=1}^{r}\det\left(\Sigma_{C_{j}}\right)^{-\frac{c_{j}+1}{2}}}{\prod_{j=2}^{r}\det\left(\Sigma_{S_{j}}\right)^{-\frac{s_{j}+1}{2}}}
\end{align*}
%%%
\end{itemize}
%%%
\end{proof}
%%%
\begin{Rem}
%%%
The Markov ratio in the right-hand-side of the Equation \eqref{eq:mr} is the squared root of the Jacobian of the inverse mapping
$\left(\Sigma^{E}\mapsto\Sigma^{-1}\right):\Q_{\G}\rightarrow \P_{\G}$. It can be shown directly that the left-hand-side of the Equation \eqref{eq:mr} is also is the squared root of the Jacobian of the inverse mapping $Q_{\D}\mapsto P_{\D}$ (see \cite{BR11}).\\
 We illustrate the result of Lemma \ref{lem:mr} for the decomposable graph $\G$  and its perfect DAG version $\D$  given in Figure \ref{fig:c1} and  Figure \ref{fig:cd1}, respectively.
%%%
\end{Rem}
%%%
\begin{Ex}
%%%
By using the same perfect order of $\mathcal{P}$  as in Example \ref{ex:LM2_a} the corresponding Markov
ratio becomes
%%%
 \begin{align*}
&\frac{\prod_{j=1}^{4} \det\left( \Sigma_{C_{j}} \right)^{-\frac{c_{i}+1}{2}}}
{\prod_{j=2}^4 \det\left( \Sigma_{S_{j}} \right)^{-\frac{s_{j}+1}{2}}}= \dfrac{\det\left(\Sigma_{\preceq 3\succeq}\right)^{-2}\det\left(\Sigma_{\preceq
4\succeq}\right)^{-2}\det\left(\Sigma_{\preceq 2\succeq}\right)^{-3/2}\det\left(\Sigma_{\preceq 1\succeq}\right)^{-3/2}}{\det\left(\Sigma_{\prec 3\succ}\right)^{-3/2}\det\left(\Sigma_{\prec
2\succ}\right)^{-1}\det\left(\Sigma_{\prec 1\succ}\right)^{-1}}\\
&=\dfrac{\left(\Sigma_{33|\prec 3\succ}\right)^{-2}\det\left(\Sigma_{\prec 3\succ}\right)^{-2}\left(\Sigma_{44|\prec 4\succ}\right)^{-2}\det\left(\Sigma_{\prec 4\succ}\right)^{-2}\left(\Sigma_{22|\prec
2\succ}\right)^{-3/2}\det\left(\Sigma_{\prec 2\succ}\right)^{-3/2}\left(\Sigma_{11|\prec 1\succ}\right)^{-3/2}\det\left(\Sigma_{\prec 1\succ}\right)^{-3/2}}{\det\left(\Sigma_{\prec
3\succ}\right)^{-3/2}\det\left(\Sigma_{\prec 2\succ}\right)^{-1}\det\left(\Sigma_{\prec 1\succ}\right)^{-1}}\\
&=\left(\Sigma_{33|\prec 3\succ}\right)^{-2}\det\left(\Sigma_{\prec 3\succ}\right)^{-1/2}\left(\Sigma_{44|\prec 4\succ}\right)^{-2}\det\left(\Sigma_{\prec 4\succ}\right)^{-1/2}\left(\Sigma_{55|\prec
5\succ}\right)^{-3/2}\det\left(\Sigma_{\prec 5\succ}\right)^{-1/2}\left(\Sigma_{22|\prec 2\succ}\right)^{-3/2}\\
&\times \det\left(\Sigma_{\prec 2\succ}\right)^{-1/2}\left(\Sigma_{11|\prec
1\succ}\right)^{-3/2}\det\left(\Sigma_{\prec 1\succ}\right)^{-1/2}\Sigma_{66}^{-1}\\
&=\prod_{j=1}^{6}\left(\Sigma_{jj|\prec j\succ}\right)^{-\frac{pa_{j}+2}{2}}\det\left(\Sigma_{\prec j\succ}\right)^{-1/2}.
\end{align*}
%%%
\end{Ex}
%%%
Next we use Lemma \ref{lem:mr} to prove an analog of Proposition \ref{prop:B1} for the Letac-Massam Type I Wisharts.
%%%
\begin{proposition}\label{prop:A1}
Let $\G$ be a non-complete (decomposable) graph. Assume that $\D$ is a DAG version of $\G$ induced by $\mathcal{P}$ such that $S_{2}$ is ancestral. Let $\mathscr{S}_{\G}^{\D}$ and $r_{\D}$ be as those defined in Definition \ref{def:ans}. Then the dimension of the set of $\left(\alpha,\beta\right)\in \R^{r}\times \R^{r-1}$ that satisfies  Equation \eqref{eq:A1} is greater than or equal to $r+r_{\D}$.
\end{proposition}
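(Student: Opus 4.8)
The plan is to run the same argument as in Proposition \ref{prop:B1}, but on the cone $\Q_{\G}$ with the covariance-side parametrization, using Lemma \ref{lem:mr} in place of the rewriting \eqref{eq:markov_ratio_rewritten}. First I would fix a DAG version $\D$ of $\G$ induced by $\mathcal{P}$ with $S_{2}$ ancestral (such a $\D$ exists by the ``moreover'' clause of Lemma \ref{lem:combine}a, which also guarantees $\D$ is perfect). The integrand of $\omega_{\Q_{\G}}$ factors as $\exp\{-\tr(\Sigma U^{-1})\}$ times $H_{\G}(\alpha,\beta,\Sigma^{E})$ times the density of $\mu_{\G}$ with respect to $d\Sigma^{E}$. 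I would apply Lemma \ref{lem:mr} to rewrite that density, which is exactly $H_{\G}(-(c+1)/2,-(s+1)/2,\Sigma^{E})$, as $\prod_{j\in V}(\Sigma_{jj|\prec j\succ})^{-(pa_{j}+2)/2}\det(\Sigma_{\prec j\succ})^{-1/2}$; and I would apply $\det(\Sigma_{C_{i}})=\det(\Sigma_{R_{i}|S_{i}})\det(\Sigma_{S_{i}})$ together with Lemma \ref{lem:ancestral}ii) (and $\det(\Sigma_{R_{i}|S_{i}})=\prod_{\ell\in R_{i}}D_{\ell\ell}$, exactly as in the proof of Theorem \ref{thm:comparison}) to rewrite $H_{\G}(\alpha,\beta,\Sigma^{E})$ as a product of powers of the $D_{jj}=\Sigma_{jj|\prec j\succ}$ times $\prod_{S}\det(\Sigma_{S})^{f(S)}$, where $f(S):=\sum_{j\in J(\mathcal{P},S)}\alpha_{j}-\nu(S)\beta(S)$ is precisely the quantity constrained in condition a) of $A_{\mathcal{P}}$. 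Here the hypothesis that $S_{2}$ is ancestral is what lets me also write the $S_{2}$-block obtained from splitting $C_{1}=R_{1}\cup S_{2}$ entirely as a product of the $D_{\ell\ell}$.

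Next I would separate the separator factors into ancestral and non-ancestral ones. For every ancestral separator $S\in\mathscr{S}_{\G}^{\D}$, Lemma \ref{lem:ancestral}ii) gives $\det(\Sigma_{S})=\prod_{\ell\in S}D_{\ell\ell}$, so the factor $\det(\Sigma_{S})^{f(S)}$ collapses into the product of powers of the $D_{jj}$; only the non-ancestral separators, none of which equals $S_{2}$, retain a genuine $\det(\Sigma_{S})^{f(S)}$ term. Imposing $f(S)=0$ for each non-ancestral $S\neq S_{2}$, the integrand reduces to $\exp\{-\tr(\Sigma U^{-1})\}\prod_{j\in V}D_{jj}^{g_{j}}\det(\Sigma_{\prec j\succ})^{-1/2}\,d\Sigma^{E}$ for suitable exponents $g_{j}$ that are affine in $(\alpha,\beta)$. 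This is a Riesz/DAG-type integrand on $\Q_{\G}$, and passing to the modified Cholesky coordinates $(D_{jj},\{L_{ij}\})$ of $\Sigma$ factorizes the integral vertex by vertex into Gaussian integrals in the regression coefficients and one-dimensional Gamma integrals in the $D_{jj}$; each converges precisely when $g_{j}$ exceeds an explicit threshold, so \eqref{eq:A1} holds on a non-empty open region cut out by these strict inequalities.

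Finally I would carry out the dimension count. The reduced set lies inside the affine subspace $W:=\{(\alpha,\beta):f(S)=0 \text{ for all non-ancestral }S\neq S_{2}\}$, and the convergence inequalities carve a full-dimensional open subset out of $W$. Now $A_{\mathcal{P}}$ is obtained from $W$ by additionally imposing $f(S)=0$ for the $r_{\D}-1$ ancestral separators other than $S_{2}$ (recall $S_{2}$ is ancestral by hypothesis and never appears in condition a), and $A_{\mathcal{P}}$ has dimension $r+1$ by the Remark following the LM conjecture. Since the equations $f(S)=0$ are linearly independent (each involves a distinct $\beta(S)$), dropping exactly these $r_{\D}-1$ of them raises the dimension from $r+1$ to $(r+1)+(r_{\D}-1)=r+r_{\D}$; hence the set on which \eqref{eq:A1} holds contains a manifold of dimension $r+r_{\D}$, as claimed.

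I expect the main obstacle to be the second step, namely justifying that the Type I integral over $\Q_{\G}$ with the reduced integrand $\exp\{-\tr(\Sigma U^{-1})\}\prod_{j}D_{jj}^{g_{j}}\det(\Sigma_{\prec j\succ})^{-1/2}$ genuinely factorizes and converges on an open region of full dimension. Unlike Proposition \ref{prop:B1}, which can lean on the fully characterized integrability of the DAG Wisharts of \cite{BR11} on $\P_{\D}$, here the relevant object lives on $\Q_{\G}$, so I would need either the Cholesky-coordinate factorization above (tracking the Jacobian, which by the Remark after Lemma \ref{lem:mr} is governed by the same Markov ratio appearing in \eqref{eq:mr}) or a direct comparison with the Riesz distributions of \cite{A10}. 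The bookkeeping of the exponents $g_{j}$ and the verification that the resulting threshold inequalities are simultaneously satisfiable on a set of the claimed dimension is where the care is required.
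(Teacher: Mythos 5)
Your proposal follows essentially the same route as the paper's own proof: existence of $\D$ via Lemma \ref{lem:combine}, rewriting the Type I density with Lemma \ref{lem:mr}, collapsing the ancestral-separator determinants via Lemma \ref{lem:ancestral}, imposing the linear constraints $f(S)=0$ only for non-ancestral separators, and recognizing the reduced integrand as the generalized Riesz density on $\Q_{\G}$ whose integrability criterion (exponents exceeding $pa_j/2$-type thresholds) is exactly what the paper takes from \cite{A10}. Your explicit dimension count against $\dim A_{\mathcal{P}}=r+1$ is, if anything, more detailed than the paper's, which leaves that step implicit.
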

%%%
\begin{proof}
%%%
First note that  by Lemma \ref{lem:combine} assuming the existence of a such $\D$  is not vacuous. Now using Lemma \ref{lem:mr}, we begin with rewriting the density of the Type I Wishart to obtain an expression in terms of relationships in the directed version $\D$. We proceed as follows.
%%%
\begin{align}\label{eq:exp}
\notag
W_{Q_G}\left(\alpha,\beta,U^{E}\right)&\propto \exp\left\{-\tr\left(\Sigma
U^{-1}\right)\right\}H_{\G}\left(\alpha-\frac{c+1}{2},\beta-\frac{s+1}{2}, \Sigma^{E}\right)\\
\notag
&=\exp\left\{-\tr\left(\Sigma U^{-1}\right)\right\}\dfrac{\prod_{j=1}^{r} \det\left( \Sigma_{C_{j}} \right)^{\alpha_{j}}}
{\prod_{j=2}^{r} \det\left( \Sigma_{S_{j}} \right)^{\beta_{j}}}\dfrac{\prod_{j=1}^{r} \det\left( \Sigma_{C_{j}} \right)^{-\frac{c_{j}+1}{2}}}
{\prod_{j=2}^{r} \det\left( \Sigma_{S_{j}} \right)^{-\frac{s_{j}+1}{2}}}\\
\notag
&=\exp\left\{-\tr\left(\Sigma U^{-1}\right)\right\}\det\left(\Sigma_{\left(C_{1}\setminus
S_{2}\right)|S_{2}}\right)^{\alpha_{1}}\det\left(\Sigma_{S_{2}}\right)^{\alpha_{1}}\prod_{j=2}^{r}\det\left(\Sigma_{R_{j}|S_{j}}\right)^{\alpha_{j}}\\
\notag
&\times\prod_{j=2}^{r}\det\left(\Sigma_{S_{j}}\right)^{\alpha_{j}-\beta_{j}}\prod_{j=1}^{p}\left(\Sigma_{jj|\prec j\succ}\right)^{-\frac{pa_{j}+2}{2}}\det\left(\Sigma_{\prec
j\succ}\right)^{-\frac{1}{2}}\\
\notag
&=\exp\left\{-\tr\left(\Sigma U^{-1}\right)\right\}\det\left(\Sigma_{\left(C_{1}\setminus S_{2}\right)|S_{2}}\right)^{\alpha_{1}}\prod_{j=3}\det\left(\Sigma_{R_{j}|S_{j}}\right)^{\alpha_{j}}\\
&\times\prod_{S\in \mathscr{S}_{\G}\setminus{S_{2}}}^{r}\det\left(\Sigma_{S}\right)^{\sum \left(\alpha_{j}: j\in J\left(\mathcal{P},
S\right)\right)-\nu\left(S\right)\beta\left(S\right)}\prod_{j=1}^{p}\left(\Sigma_{jj|\prec j\succ}\right)^{-\frac{pa_{j}+2}{2}}\det\left(\Sigma_{\prec j\succ}\right)^{-\frac{1}{2}}.
\end{align}
%%%
Recall that in the proof of Theorem \ref{thm:comparison} it was showed that  $\det\left(\Sigma_{C_{1}\setminus S_{2}|S_{2}}\right)$, $\det\left(\Sigma_{R_{j}|S_{j}}\right)$\; for $2\leq j\leq r$,  and $\det\left(\Sigma_{S_j}\right)$ for each  $S\in \mathscr{S}_{\G}^{\D}$ can be written as products of $\Sigma_{\ell\ell|\prec \ell\succ}$ raised to some powers. Now define
%%%
\[
C_{\mathcal{P}}:= \left\{ \left(\alpha,\beta\right)\in \R^{r}\times \R^{r-1}: \sum\left(\alpha_{j}:\; j\in J\left(\mathcal{P}, S\right)\right)-\nu\left(S\right)\beta\left(S\right)=0 \quad \forall
S\notin \mathscr{S}_{\G}^{\D}\right\}.
\]
%%%
 Clearly, for each $\left(\alpha,\beta\right)\in C_{\mathcal{P}}$ the expression in Equation \eqref{eq:exp} reduces to
 %%%
\begin{equation}\label{eq:Riesz} 
%%%
\exp\left\{-\tr\left(\Sigma U^{-1}\right)\right\}\prod_{j=1}^{p} \left(\Sigma_{jj|\prec j\succ}\right)^{\lambda_{j}}\prod_{j=1}^{p}\left(\Sigma_{jj|\prec
j\succ}\right)^{-\frac{pa_{j}+2}{2}}\det\left(\Sigma_{\prec j\succ}\right)^{-\frac{1}{2}},
%%%
\end{equation}
%%%
where $\lambda_{j}=\lambda_{j}\left(\alpha, \beta\right)$ is an affine combination of the components of $\alpha$ and $\beta$. The expression in Equation \eqref{eq:Riesz} is the non-normalized version of the density of the generalized Riesz distribution on $Q_G$, defined in \cite{A10}, and is integrable, by Equation (18) in \cite{A10},  if and only if $\lambda_{j}> pa_{j}/2$\; for each $j=1,\ldots,\, p$.
\end{proof}
%%%
An important observation in Proposition \ref{prop:A1} is that if $\G$ is chosen such that $r_{\D}>1$, then Part (I) of the LM conjecture may fail.  In fact we can show that the same decomposable graphs given in Figure \ref{fig:c1} and Figure \ref{fig:c2} provide two counterexamples to Part (I) of the LM conjecture. Since the calculations are very similar, we provide details only for the second graph.\\
%%%%
\begin{Ex}
Consider the graph $\G$  given in Figure  \ref{fig:c2} and its DAG version  given in Figure \ref{fig:cd2}. Note that the LM conjecture (I) implies that the dimension of the parameter set $\mathcal{A}$ is $6$ since $r=5$. Now $\D$ is induced by
$\mathcal{P}=\left(\preceq 4\succeq, \preceq 3\succeq, \preceq 6\succeq, \preceq 2\succeq, \preceq 1\succeq\right)$. We proceed as follows.
%%%
\begin{align}\label{eq:markov_ratio_example_b}
\notag&\dfrac{\prod_{j=1}^{5} \det\left( \Sigma_{C_{j}} \right)^{\alpha_{j}-\frac{c_{j}+1}{2}}}
{\prod_{j=2}^{5} \det\left( \Sigma_{S_{j}} \right)^{\beta_{j}-\frac{s_{j}+1}{2}}}=
\dfrac{ \det\left( \Sigma_{\preceq 4\succeq} \right)^{\alpha_{1}}\det\left( \Sigma_{\preceq 3\succeq} \right)^{\alpha_{2}}\det\left( \Sigma_{\preceq 6\succeq}
\right)^{\alpha_{3}}\det\left( \Sigma_{\preceq 2\succeq} \right)^{\alpha_{4}}\det\left( \Sigma_{\preceq 1\succeq} \right)^{\alpha_{5}}}
{ \det\left( \Sigma_{\prec 3\succ} \right)^{\beta_{2}}\det\left( \Sigma_{\prec 6\succ} \right)^{\beta_{3}}\det\left( \Sigma_{\prec 5\succ} \right)^{\beta_{4}}\det\left(
\Sigma_{\prec 1\succ} \right)^{\beta_{5}}}\\
\notag&\times
\dfrac{ \det\left( \Sigma_{\preceq 4\succeq} \right)^{-2}\det\left( \Sigma_{\preceq 3\succeq} \right)^{-2}\det\left( \Sigma_{\preceq 6\succeq} \right)^{-\frac{3}{2}}\det\left(
\Sigma_{\preceq 2\succeq} \right)^{-2}\det\left( \Sigma_{\preceq 1\succeq} \right)^{-2}}
{ \det\left( \Sigma_{\prec 3\succ} \right)^{-\frac{3}{2}}\det\left( \Sigma_{\prec 6\succ} \right)^{-1}\det\left( \Sigma_{\prec 5\succ} \right)^{-1}\det\left( \Sigma_{\prec 1\succ}
\right)^{-\frac{3}{2}}}\\
\notag&=\dfrac{D_{44}^{\alpha_{1}}D_{77}^{\alpha_{1}}D_{88}^{\alpha_{1}}D_{33}^{\alpha_{2}}D_{77}^{\alpha_{2}}D_{88}^{\alpha_{2}}D_{66}^{\alpha_{3}}D_{88}^{\alpha_{3}}D_{22}^{\alpha_{4}}D_{55}^{\alpha_{4}}\Sigma_{66}^{\alpha_{4}}D_{11}^{\alpha_{5}}D_{55}^{\alpha_{5}}\Sigma_{66}^{\alpha_{5}}}{D_{77}^{\beta_{2}}D_{88}^{\beta_{2}}D_{88}^{\beta_{3}}\Sigma_{66}^{\beta_{4}}D_{55}^{\beta_{5}}\Sigma_{66}^{\beta_{5}}}\prod_{j=1}^{8}\left(\Sigma_{jj|\prec
j\succ}\right)^{-\frac{pa_{j}+2}{2}}\det\left(\Sigma_{\prec j \succ}\right)^{-\frac{1}{2}}\\
&=D_{11}^{\alpha_{5}}D_{22}^{\alpha_{4}}D_{33}^{\alpha_{2}}D_{44}^{\alpha_{1}}D_{55}^{\alpha_{4}+\alpha_{5}-\beta_{5}}D_{66}^{\alpha_{3}}D_{77}^{\alpha_{1}+\alpha_{2}-\beta_{2}}D_{88}^{\alpha_{1}+\alpha_{2}+\alpha_{3}-\beta_{2}-\beta_{3}}\Sigma_{66}^{\alpha_{4}+\alpha_{5}-\beta_{4}-\beta_{5}}
\prod_{j=1}^{8}\left(\Sigma_{jj|\prec j\succ}\right)^{-\frac{pa_{j}+2}{2}}\det\left(\Sigma_{\prec j \succ}\right)^{-\frac{1}{2}}.
\end{align}
%%%
Let  $\lambda_{j}$  be the exponent of $D_{jj}$  in Equation \eqref{eq:markov_ratio_example_b}  for each  $j=1,\ldots, 8$.  If we set  $\alpha_{4}+\alpha_{5}-\beta_{4}-\beta_{5}=0$,  then we obtain
\begin{equation}\label{eq:LM_andersson}
\int_{\Q_{\G}}\omega_{\Q{\G}}\left(\alpha,\beta, U,d\Sigma^{E}\right)=\int_{\Q_{\G}}\exp\left\{-\tr\left(\Sigma U\right)\right\}\prod_{j=1}^{8}D_{jj}^{\lambda_{j}}
\prod_{j=1}^{8}\left(\Sigma_{jj|\prec j\succ}\right)^{-\frac{pa_{j}+2}{2}}\det\left(\Sigma_{\prec j \succ}\right)^{-\frac{1}{2}}d\Sigma^{E}.
\end{equation}
The integrand in the right-hand-side of Equation \eqref{eq:LM_andersson} corresponds to the non-normalized  density of the generalized  Riesz distribution on  $\Q_{\D}$  and  has a finite integral  if and only if $\lambda_{j}>pa_{j}/2$  for each  $j=1,\ldots, 8$.  Furthermore, under these conditions, we have 
\begin{equation}\label{eq:riesz_normalizing_constant}
\int_{\Q_{\G}}\exp\left\{-\tr\left(\Sigma U\right)\right\}\prod_{j=1}^{8}D_{jj}^{\lambda_{j}}
\prod_{j=1}^{8}\left(\Sigma_{jj|\prec j\succ}\right)^{-\frac{pa_{j}+2}{2}}\det\left(\Sigma_{\prec j \succ}\right)^{-\frac{1}{2}}d\Sigma^{E}\propto\prod_{j=1}^{p}\left(U_{jj|\prec j\succ}\right)^{\lambda_{j}}.
\end{equation}
See \cite[ section 6]{A10} for details.  On the other hand, by very similar calculations as those which lead to Equation \eqref{eq:markov_ratio_example_b} we  can show that
\[
H_{\G}\left(\alpha,\beta, U^{E}\right)=\prod_{j=1}^{p}\left(U_{jj|\prec j\succ}\right)^{\lambda_{j}}U_{66}^{\alpha_{4}+\alpha_{5}-\beta_{4}-\beta_{5}}.
\]
Therefore, from Equation \eqref{eq:riesz_normalizing_constant},  for any  $\left(\alpha,\beta\right)$  satisfying   $\lambda_{j}\left(\alpha, \beta\right)>pa_{j}/2$  and   $\alpha_{4}+\alpha_{5}-\beta_{4}-\beta_{5}=0$  we conclude that 
\[
\int_{\Q_{\G}}\omega_{\Q{\G}}\left(\alpha,\beta, U,d\Sigma^{E}\right)/H_{\G}\left(\alpha, \beta, U^{E}\right)<\infty\:\text{and is functionally independent of }\: U^{E}. 
\]
Consequently,  the dimension of  $\mathcal{A}$  is greater than or equal to  $8$. Thus the LM conjecture (I) fails.
\end{Ex}
%%%%%%%%%%%%%%%%%%%
\section{Closing remarks}

In this paper we develop appropriate tools in order to carefully compare the Wishart Type II distributions introduced by Letac and Massam in \cite{L07} for decomposable graphs and the DAG Wisharts introduced by the authors in \cite{BR11}. The comparison is made when the DAG Wisharts are restricted to the class of perfect DAGs, that is where DAGs are Markov equivalent to the
class of decomposable graphs. By this comparison, we establish the fact that in general, the family of Type II Wisharts is a subfamily of that of DAG Wisharts when the multi-parameters
are restricted to a well identified set $\mathcal{B}_{\mathcal{P}}$. In case of homogeneous graphs we show that the latter
restriction is not needed. In light of this result we are led to a condition on the structure of the graphs that yield counterexamples to the second part the LM conjecture. By taking a similar approach we are also able to reject the first part of the LM conjecture and therefore completely resolve the LM conjecture. 

%%%%%%%%%%%%%%%%%%%%%%%%%%%%%%%%%%%%BIBLIOGRAPHY  %%%%%%%%%%%%%%%%%%%%%%%%%%%%%%%%%%%%%%%%

\end{document}